\newcommand\sectionpage\newpage
\newtheorem{lem}{Lemma}[section]
\newtheorem{cor}{Corollary}[section]
\newtheorem{prop}{Proposition}[section]
\newtheorem{thm}{Theorem}[section]
\theoremstyle{definition}
\newtheorem{conj}{Conjecture}[section]
\newtheorem{prob}{Problem}[section]
\numberwithin{equation}{section}
\renewcommand\mod{\, \operatorname{mod}\, }
\renewcommand{\phi}{\varphi} 
\renewcommand{\epsilon}{\varepsilon}
\newcommand\eset{\varnothing}
\newcommand\codim{\operatorname{codim}}
\newcommand\cA{\mathscr{A}}		% script for arrangements, et al.
\newcommand\cB{\mathcal{B}}
\newcommand\cBo{{\cB^\circ}}
\renewcommand\cH{\mathcal{H}}	% \cH seems to be defined by xypic.	\mathcal for subspaces
\renewcommand\cL{\mathscr{L}}	% \cL seems to be defined by xypic.
\newcommand\cP{\mathcal{P}}
\newcommand\cU{\mathcal{U}}	% \cU for bishops subspace	\mathcal for subspaces
\newcommand\tcU{\widetilde\cU}
\newcommand\cW{\mathcal{W}}
\newcommand\bbR{\mathbb{R}}
\newcommand\bbZ{\mathbb{Z}}
\newcommand\pN{\mathbb N}	% Nightrider
\newcommand\pP{\mathbb P}	% Piece
\newcommand\pQ{\mathbb Q}	% Queen
\newcommand\bL{\mathbf{L}}
\newcommand\lcm{\operatorname{lcm}}
\newcommand\lB{l_\cB}
\newcommand\M{\mathbf{M}}
\newcommand\Kot{Kot\v{e}\v{s}ovec}
\newcommand\cube{[0,1]^{2q}}
\newcommand\ocube{(0,1)^{2q}}
\newcommand\osquare{(0,1)^{2}}
\newcommand\barn{{\bar n}}
\newcommand\hatc{{\hat c}}
\newcommand\hatd{{\hat d}}
\newcommand\Aut{\operatorname{Aut}}
\renewcommand\pmod[1]{\;(\textrm{mod}\;#1)}
\newcommand\nhatd{\barn}
\newcommand\xrefcomment[1]{}	% Cross-reference name revealed
\newcommand\Ptwopiecetypes{I.5.6\xrefcomment{P:2piecetypes}}
\newcommand\Tgammapoly{I.4.2\xrefcomment{T:gammapoly}}
\newcommand\Eiopmu{(I.2.1)\xrefcomment{E:iopmu}}
\newcommand\Ttwomovetypes{I.5.8\xrefcomment{T:2movetypes}}
\newcommand\Tformula{I.4.1\xrefcomment{T:formula}}
\newcommand\Ttypenumber{I.5.3\xrefcomment{T:typenumber}}
\newcommand\Phvdiag{III.3.1\xrefcomment{P:hvdiag}}	%verified	{P:hvdiag}
\newcommand\N{IV.6\xrefcomment{N}}		%verified	Section {N}
\begin{document}

\pagestyle{myheadings}
\markleft{Chaiken, Hanusa, and Zaslavsky \qquad\qquad \today}
\markright{A $q$-Queens Problem. II.  The Square Board \quad \today}

\title{A $q$-Queens Problem. 
II.  The Square Board \\[10pt]  
\today}

\author{Seth Chaiken}
\address{Computer Science Department\\ The University at Albany (SUNY)\\ Albany, NY 12222, U.S.A.}
\email{\tt sdc@cs.albany.edu}

\author{Christopher R.\ H.\ Hanusa}
\address{Department of Mathematics \\ Queens College (CUNY) \\ 65-30 Kissena Blvd. \\ Queens, NY 11367-1597, U.S.A.}
\email{\tt chanusa@qc.cuny.edu}

\author{Thomas Zaslavsky}
\address{Department of Mathematical Sciences\\ Binghamton University (SUNY)\\ Binghamton, NY 13902-6000, U.S.A.}
\email{\tt zaslav@math.binghamton.edu}

\begin{abstract}
We apply to the $n\times n$ chessboard the counting theory from Part~I for nonattacking placements of chess pieces with unbounded straight-line moves, such as the queen.  Part~I showed that the number of ways to place $q$ identical nonattacking pieces is given by a quasipolynomial function of $n$ of degree $2q$, whose coefficients are (essentially) polynomials in $q$ that depend cyclically on $n$.  

Here we study the periods of the quasipolynomial and its coefficients, which are bounded by functions, not well understood, of the piece's move directions, and we develop exact formulas for the very highest coefficients.  
The coefficients of the three highest powers of $n$ do not vary with $n$.  On the other hand, we present simple pieces for which the fourth coefficient varies periodically.  We develop detailed properties of counting quasipolynomials that will be applied in sequels to partial queens, whose moves are subsets of those of the queen, and the nightrider, whose moves are extended knight's moves.  

We conclude with the first, though strange, formula for the classical $n$-Queens Problem and with several conjectures and open problems.
\end{abstract}

\subjclass[2010]{Primary 05A15; Secondary 00A08, 52C07, 52C35.
}

\keywords{Nonattacking chess pieces, fairy chess pieces, Ehrhart theory, inside-out polytope, arrangement of hyperplanes}

\thanks{The outer authors thank the very hospitable Isaac Newton Institute for facilitating their work on this project.  The inner author gratefully acknowledges support from PSC-CUNY Research Awards PSCOOC-40-124, PSCREG-41-303, TRADA-42-115, TRADA-43-127, and TRADA-44-168.}

\maketitle

%\newpage
\setcounter{tocdepth}{4}
\tableofcontents

%%%%%%%%%%%%%%%%%%%%%%%%%%%%%%%%%%%%%%%%%%%%%%%%%%
%\sectionpage
\section{Introduction}\label{intro}

The well known $n$-Queens Problem asks for the number ways to place $n$ nonattacking queens on an $n\times n$\label{d:n} chessboard.  No general formula is known; the answer has been computed separately for each small value of $n$.  In this article, the second of a series~\cite{QQ}, we treat a natural generalization, the $q$-Queens Problem, in which both the number of queens, $q$, and the size of the board, $n$, vary independently.  In Part~I we developed a general theory for arbitrary convex-polygonal boards with rational vertices and any chess piece $\pP$ with unbounded straight-line moves (known as a ``rider''); this includes the queen, rook, and bishop as well as fairy chess pieces such as the nightrider, whose moves are those of a knight extended to any distance.  The main result of Part~I  (Theorem~\Tformula) is that the answer is a quasipolynomial function of $n$, which means it is given by a cyclically repeating sequence of polynomials, and the coefficients of powers of $n$ are essentially polynomial functions of $q$; in fact, we found a complicated general coefficient formula.  We deduced this from the theory of inside-out polytopes, which is an extension of Ehrhart's theory of counting lattice points in convex polytopes.  

Here and in later parts we treat the square board.  The self-similarity of its interior lattice points permits us to find explicit formulas for the two highest coefficients and partial formulas for the others, for arbitrary riders.  Setting $q=n$ we obtain the first known formula for the $n$-Queens Problem.  

In Parts~III through V we specialize to specific pieces.  Part~III applies the theory of Parts~I and II to ``partial queens'', which have a subset of the queen's moves.  We found partial queens easier to work with than other pieces with equally many moves; we also think they make good test cases for conjectures.  The main examples of partial queens are the bishop, queen, and rook, which we treat in detail along with the nightrider in Parts~IV and V.

\medskip
Part~I serves as a general introduction to the series.  Its introduction gives a fuller description of the background of our research, including the valuable hints obtained from the formulas collected and developed by V\'aclav \Kot\ (see~\cite{ChMath}).  
We define the necessary terminology and notation from Part~I; we provide a dictionary of notation to assist the reader (and authors).  

Briefly (see Section~\ref{prelim} for further detail), the board consists of the integral points in the dilate $(n+1)\cBo = (n+1)\osquare$\label{d:n+1} of the open unit square.  
The associated polytope is $\cP = \cB^q = \cube$ with interior $\cP^\circ = \cBo^q = \ocube$.  The inside-out polytope is $(\cube,\cA_\pP)$\label{d:iop} where $\cA_\pP$ is an arrangement of hyperplanes determined by the moves of the piece $\pP$.\label{d:P}  Each hyperplane is the kernel of an expression involving two pieces.  An \emph{intersection subspace} is an intersection of a subset of $\cA_\pP$; the lattice of all intersection subspaces (ordered by reverse inclusion) is denoted by $\cL(\cA_\pP)$,\label{d:L} or by $\cL(\cA_\pP^q)$ when we wish to emphasize the value of $q$, and $\mu$ denotes its M\"obius function.\label{d:mu}  

The moves of $\pP$ are all integral multiples of vectors in a nonempty set $\M$ of non-zero, non-parallel integral vectors $m_r=(c_r,d_r) \in \bbR^2$ reduced to lowest terms (that is, $c_r$ and $d_r$ are relatively prime).  The counting function $u_\pP(q;n)$\label{d:indistattacks} is defined as the number of nonattacking configurations of $q$ indistinguishable copies of $\pP$ on the $n \times n$ board, and $o_\pP(q;n)$\label{d:distattacks} is the number of such configurations of $q$ distinguishable copies.  By Theorem~\Tformula\ $u_\pP(q;n)$ is a quasipolynomial function of $n$, which we expand as 
$$
u_\pP(q;n) = \gamma_0(n) n^{2q} + \gamma_1(n) n^{2q-1} + \gamma_2(n) n^{2q-2} + \cdots + \gamma_{2q}(n) n^0.  
$$
By Ehrhart theory the leading coefficient is $\gamma_0(n) = 1/q!$\label{d:gamma} and the period of $u_\pP(q;n)$ is a divisor of the denominator $D(\cube,\cA_\pP)$, defined as the least common denominator of all coordinates of all vertices of $(\cube,\cA_\pP)$.  (The denominator of a polytope alone is defined similarly.)  
By Theorem~\Ttypenumber\ the number of unlabelled combinatorial types of nonattacking configuration equals $u_\pP(q;-1)$.  

A trivial observation is that 
$u_\pP(1;n) = n^2$
for any piece, and with one piece there is (of course) one combinatorial type.  Theorem~\ref{T:u2P} gives a complete solution for $u_\pP(2;n)$, the counting function for two copies of an arbitrary rider piece.  
General formulas for $u_\pP(q;n)$ when $q\geq 3$ are difficult.  

A computational approach to finding $u_\pP(q;n)$ explicitly for a particular piece is to evaluate it at enough small values of $n$ by counting the non-attacking configurations, bounding the period $p$\label{d:p} somehow (possibly by bounding the denominator $D(\cP,\cA_\pP)$), and using that information to interpolate the coefficients of the $p$ constituent polynomials.  To get a confirmed answer, $2pq$ values of $u_\pP(q;n)$ must be computed.  (See Section~\ref{recurrence} for more detail.)  This method becomes hard for most problems because it involves a daunting amount of computation if the period or its best known bound is large, as is usually the case.  That is why we think it important to find good bounds on the period.  We find some periods here; we propose related conjectures and problems in Section~\ref{bounds}.

Any preliminary information about $u_\pP(q;n)$ can reduce the number of required values.  Our main result, Theorem~\ref{T:gammapolysquare}, reduces that number by $2p-1$ by giving a simple formula for the second coefficient, $\gamma_1$, and proving that $\gamma_2$ is independent of $n$.  (We reduce it by two more by noting that $u_\pP(q;0) = 0$ and $u_\pP(q;1) = \delta_{q1}$, the Kronecker delta.\label{d:KD})
The proof of Theorem~\ref{T:gammapolysquare} depends on the structure of the subspace Ehrhart functions developed in Section~\ref{ehrhartcoeffs}.  In Section~\ref{1move}, through an explicit construction involving pieces with one move direction, we show that $\gamma_3$ may depend on $n$, in contrast to the constancy of $\gamma_0$, $\gamma_1$, and $\gamma_2$. 

Our formula for the $n$-Queens Problem (Section~\ref{nqueens}) is an immediate corollary of Theorem~\ref{T:gammapolysquare}.  It is not clear how practical this formula is, as it has infinitely many terms (though only finitely many for each $q$) that become harder and harder to evaluate, but it is precise and complete and shows a clear structure from which it may be possible to deduce interesting consequences---which we do not attempt to do here.

We conclude with open problems, of which one, dealing with recurrences satisfied by $u_\pP(q;n)$ for fixed $q$ (Section~\ref{recurrence}), promises to be superbly important.

%%%%%%%%%%%%%%%%%%%%%%%%%%%%
\sectionpage
\section{Preliminaries}\label{prelim}

We adopt the concise notation $[n] := \{1,\ldots,n\}$\label{d:[n]} so that the set of points representing the squares of an $n \times n$ chessboard is
$$
[n]^2 = (n+1)\osquare \cap \bbZ^2.\label{d:[n]2}
$$ 

The hyperplane representing an attack between pieces $\pP_i$ and $\pP_j$, located at $z_i=(x_i,y_i)$ and $z_j=(x_j,y_j)$, in the direction of the basic move $m=(c,d)$, i.e.\ along slope $d/c$, is $\cH^{d/c}_{ij}=\cH^{d/c}_{ji}$.\label{d:cH}  Its defining equation is $d(x_j-x_i)=c(y_j-y_i)$.  The precise definition of the move arrangement is 
$$
\cA_\pP := \{ \cH^{d/c}_{ij} : (c,d)\in\M, \ 1\leq i < j \leq q \}.
\label{d:cA}
$$

The \emph{Ehrhart quasipolynomial} of a rational convex polytope $\cP$ is $E_\cP(n+1):=$\label{d:Ehr} the number of integral lattice points in the dilate $(n+1)\cP$.  The \emph{open Ehrhart quasipolynomial} of $\cP$ is $E_\cP^\circ(n+1):=$\label{d:E} the number of integral points in the interior $(n+1)\cP^\circ$.  The open Ehrhart quasipolynomial of an inside-out polytope $(\cP,\cA)$ is $E_{\cP,\cA}^\circ(n+1):=$\label{d:Eiop} the number of integral points in $(n+1)\cP^\circ$ that are not in any hyperplane of $\cA$.

We introduce a short notation for a frequently recurring quantity.  For $\cU \in \cL(\cA_\pP)$,\label{d:cU} let $\kappa$ be the number of pieces involved in the equations that determine $\cU$ and let $\tcU$ be the essential part of $\cU$, i.e., the subspace of $\bbR^{2\kappa}$ that satisfies the same attack equations as $\cU$.  The ``reduced'' open Ehrhart quasipolynomial of $\cU$ is
$$
\alpha(\cU;n):=E_{(0,1)^{2\kappa}\cap\tcU}(n+1).
\label{d:acU}
$$ 
The actual open Ehrhart quasipolynomial is $\alpha(\cU;n)n^{2q-2\kappa}$.  

Subspaces $\cU_1,\cU_2 \in \cL(\cA_\pP)$ are \emph{isomorphic} if the $q$ copies of $\pP$ can be relabelled so that one subspace becomes the other.  The \emph{type} $[\cU]$\label{d:type} of $\cU$ is its isomorphism class.  For instance, $[\cH_{12}^{2/1}]=[\cH_{25}^{2/1}]\neq[\cH_{12}^{1/2}]$.  The relabelling is an \emph{isomorphism}.  The group of automorphisms of $\cU$ is denoted by $\Aut\cU$.\label{d:Aut}

%%%%%%%%%%%%%%%%%%%%%%%%%%%%
\sectionpage\section{Two Pieces}\label{1or2}

We examine an exceedingly small number of pieces, i.e., $q=2$.  There is a (relatively) simple way to calculate $u_\pP(2;n)$.  Define $a_\pP(2;n)$\label{d:aP} to be the number of attacking configurations of two labelled pieces $\pP$ (which may occupy the same position; that is considered attacking).  Then 
\begin{equation}
u_\pP(2;n) = \frac{1}{2!} o_\pP(2;n) = \frac{1}{2} \big[ n^4 - a_\pP(2;n) \big].
\label{E:2}
\end{equation}
Finding $a_\pP(2;n)$ is easy in principle although nontrivial in detail.  (See Equation~\eqref{E:alphasum}.)  To begin with, consider a move $(c,d) \in \M$,\label{d:cd2} whose slope is the rational fraction $d/c$, and let 
$$
l^{d/c}(b) := \text{the line in $\bbR^2$ with slope $d/c$ and $y$-intercept $b$}.
\label{ldcb}
$$
We allow $d/c=1/0 = \infty$, in which case $b$ is instead the $x$-intercept.  Define 
$$
\lB^{d/c}(b) := l^{d/c}(b) \cap [n]^2,
$$
the set of positions on the $n \times n$ board $[n]^2$ that lie on the line $l^{d/c}(b)$.  The multiset of line sizes,
$$
\bL^{d/c}(n):=\big\{|\lB^{d/c}(b)| : \lB^{d/c}(b)\neq\eset\big\},\label{d:linesizes}
$$ 
is finite and the sum of its entries is $n^2$.  
We need to know the the exact contents of $\bL^{d/c}(n)$.  Two cases are elementary:
\begin{equation*}
\begin{aligned}
\bL^{0/1}(n) = \bL^{1/0}(n) &= \{ n^{n} \}, \\
\bL^{1/1}(n) = \bL^{-1/1}(n) &= \{ 1^2, 2^2, \ldots, (n-1)^2, n^1 \}.
\end{aligned}
\end{equation*}

\begin{lem}\label{p:bL(n)}%P6.1
Assume $0 < c \leq d$ are relatively prime integers.  Let $\barn:=(n\mod d)$.\label{d:barn}  
The multiplicities of line sizes in $\bL^{d/c}(n)$ are as in the following table:
\begin{equation*}
\begin{tabular}{l@{\qquad}c@{\quad}c@{\quad}c}
Line size	& $1\leq l < \lfloor\frac{n}{d}\rfloor$	& $\lfloor\frac{n}{d}\rfloor$		& $\lfloor\frac{n}{d}\rfloor+1$	\\[6pt]
Multiplicity& $2cd$		& $(d-\barn)(n-c\lfloor\frac{n}{d}\rfloor)+c(\barn+d)$	& $\barn(n-c\lfloor\frac{n}{d}\rfloor)$
\end{tabular}
\end{equation*}
\end{lem}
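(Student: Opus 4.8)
The plan is to bypass any explicit parametrization of the individual lines and instead count collinear \emph{segments}, recovering the size distribution by finite differencing. For an integer $l\ge 1$ let $N_l$ be the number of length-$l$ segments in direction $(c,d)$, that is, the number of points $(x,y)\in[n]^2$ for which $(x,y)+(l-1)(c,d)$ also lies in $[n]^2$; since $c,d>0$, every intermediate point $(x,y)+k(c,d)$ with $0\le k\le l-1$ then lies in $[n]^2$ too, so each such segment sits on a unique line $l^{d/c}(b)$. Because a line with exactly $s$ board points contains precisely $\max(0,s-l+1)$ sub-segments of length $l$, writing $a_s$ for the multiplicity of size $s$ in $\bL^{d/c}(n)$ gives the triangular relation $N_l=\sum_{s\ge l}(s-l+1)a_s$. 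Inverting this by second differences yields the identity $a_l=N_l-2N_{l+1}+N_{l+2}$, so the entire lemma reduces to computing a three-term second difference of $N_l$.

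First I would record the closed form $N_l=\max(0,\,n-(l-1)c)\cdot\max(0,\,n-(l-1)d)$, which is immediate. Set $f(l):=(n-(l-1)c)(n-(l-1)d)$ and $h:=\lfloor n/d\rfloor$, so that $n=dh+\barn$ with $0\le\barn<d$. Using $c\le d$ one checks that the first factor stays nonnegative exactly as long as the second does: for $l\le h+1$ both factors are $\ge 0$ (the second equals $\barn$ at $l=h+1$), so $N_l=f(l)$ there, whereas $N_l=0$ for $l\ge h+2$. Since $f(l)=n^2-(l-1)n(c+d)+(l-1)^2cd$ is quadratic in $l$ with leading coefficient $cd$, its constant second difference is $2cd$; hence $a_l=f(l)-2f(l+1)+f(l+2)=2cd$ whenever $l+2\le h+1$, i.e.\ for $1\le l\le h-1=\lfloor n/d\rfloor-1$. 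This produces the generic column.

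It remains to treat the two largest sizes, where the relation $N_{h+2}=0$ breaks the quadratic pattern. Writing $X:=n-c\lfloor n/d\rfloor=n-ch$, one has $f(h+1)=X\barn$ and $f(h)=(X+c)(\barn+d)$, so that $a_h=N_h-2N_{h+1}+N_{h+2}=f(h)-2f(h+1)=(X+c)(\barn+d)-2X\barn=(d-\barn)X+c(\barn+d)$, while $a_{h+1}=N_{h+1}-2N_{h+2}+N_{h+3}=f(h+1)=X\barn$ since $N_{h+2}=N_{h+3}=0$. These are exactly the stated multiplicities for the sizes $\lfloor n/d\rfloor$ and $\lfloor n/d\rfloor+1$, completing the table.

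I expect the delicate points to be bookkeeping rather than conceptual: confirming that $N_l=f(l)$ really persists up through $l=h+1$ (this is where $c\le d$ is used, and where the cases $\barn=0$ and $\barn>0$ must be seen to agree, since $f(h+1)=X\barn$ vanishes precisely when $\barn=0$, correctly forcing no line of maximal size $h+1$), and checking the degenerate regime $h=1$, in which the generic column is empty and the formulas must still be read off correctly as $a_h=a_1$ and $a_{h+1}=a_2$. As consistency checks I would verify $\sum_l l\,a_l=n^2$ and recover the elementary case $c=d=1$, where $\barn=0$, $h=n$, $a_l=2$ for $l<n$, $a_n=1$, and $a_{n+1}=0$.
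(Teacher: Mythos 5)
Your argument is correct, and it takes a genuinely different route from the paper. The paper's proof is geometric: it names each line by its lowest board point, observes that these base points fill the L-shaped border region $Z=\{(x,y)\in[n]^2: x\le c \text{ or } y\le d\}$, subdivides $Z$ into $c\times d$ rectangles $I_1,\dots,I_\delta$, $J_1,\dots,J_\delta$ and two residual blocks $I^{\pm}$, computes the line length $\bar k+1=\min(\lfloor(n-x)/c\rfloor,\lfloor(n-y)/d\rfloor)+1$ on each block, and reads off multiplicities by counting base points --- at the cost of a final case analysis for when $I_\delta$ and $J_\delta$ overlap and $I^{+}$ has ``negative width.'' Your segment-counting argument replaces all of that with the trivial product formula $N_l=\max(0,n-(l-1)c)\cdot\max(0,n-(l-1)d)$ and the M\"obius-type inversion $a_l=N_l-2N_{l+1}+N_{l+2}$, so the only case distinction is where the two ramp factors switch off; the overlap issue never arises, and the inequality $c\le d$ enters only in checking that both factors vanish for the first time together. (The one inversion hypothesis worth stating explicitly --- that the board points of a line form a \emph{contiguous} run of lattice points with step $(c,d)$, so a size-$s$ line carries exactly $\max(0,s-l+1)$ length-$l$ segments --- follows from convexity of $[n]^2$ and $\gcd(c,d)=1$, as you implicitly use.) Your approach also scales better: the same $N_l$ data feeds directly into the higher moments $\sum_l l^q a_l$ needed for $\alpha^{d/c}$, $\beta^{d/c}$, and the generalization asked for in Section~\ref{improve}(a), whereas the paper's block decomposition must be redone term by term. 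Your listed sanity checks ($\sum_l l\,a_l=N_1=n^2$, the case $c=d=1$, and the degenerate regime $\lfloor n/d\rfloor=1$) are the right ones.
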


\begin{proof}
Let $\delta := \lfloor n/d\rfloor = (n-\barn)/d$;\label{d:n/d} note that $\delta \leq \lfloor n/c\rfloor$.  

Each nonempty line $\lB^{d/c}(b)$ has a lowest point 
$$(x,y) \in Z := \{(x,y) \in [n]^2: x \leq c \text{ or } y \leq d\},\label{d:Z}$$ 
and conversely, each point in $Z$ is the lowest point of a different line $\lB^{d/c}(b)$.  If we rename the line $\lB(x,y)$, the naming is unique and the points on $\lB(x,y)$ are the points of the form $(x,y)+k(c,d)$ for $k=0,\ldots,\bar k$, where $\bar k$ is the largest integer such that $(x,y)+\bar k(c,d) \in [n]^2$.  Solving this last restriction for $\bar k$, we find that 
$$x\leq n \implies \bar k \leq (n-x)/c \text{ and } y \leq n \implies \bar k \leq (n-y)/d.$$
Hence,  $\bar k = \min\big( \lfloor(n-x)/c\rfloor, \lfloor(n-y)/d\rfloor ).$

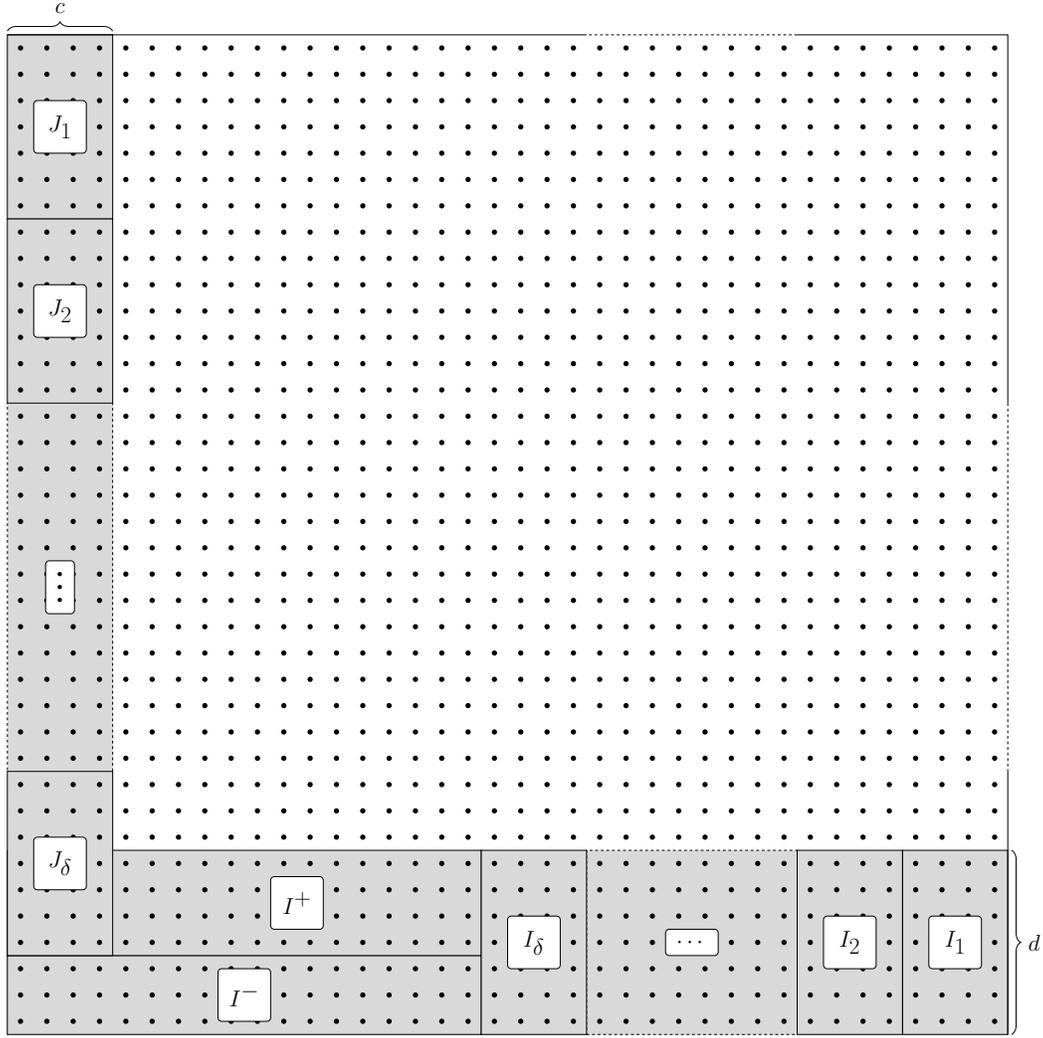
\begin{figure}[htbp]
\begin{center}
% +++++++++++++++++++++++++++++++++++++++++	BEGIN GRID PICTURE
\scalebox{0.35}{\scalefont{6}
\begin{tikzpicture}

% Boxes and Lines

        \draw[dashed,fill=black!15] (30.5,0.5) rectangle (22.5,7.5);  
        \draw[dashed,fill=black!15] (0.5,24.5) rectangle (4.5,10.5);  
        \draw[thick,fill=black!15] (0.5,0.5) rectangle (18.5,3.5);  % I^-
        \draw[thick,fill=black!15] (0.5,3.5) rectangle (18.5,7.5);  % I^+

        \draw[thick,fill=black!15] (0.5,38.5) rectangle (4.5,31.5);  
        \draw[thick,fill=black!15] (0.5,31.5) rectangle (4.5,24.5);  
%        \draw[thick,fill=black!15] (0.5,24.5) rectangle (4.5,17.5);  
%        \draw[thick,fill=black!15] (0.5,17.5) rectangle (4.5,10.5);  
        \draw[thick,fill=black!15] (0.5,10.5) rectangle (4.5,3.5);  
        \draw[thick,fill=black!15] (38.5,0.5) rectangle (34.5,7.5);  
        \draw[thick,fill=black!15] (34.5,0.5) rectangle (30.5,7.5);  
%        \draw[thick,fill=black!15] (30.5,0.5) rectangle (26.5,7.5);  
%        \draw[thick,fill=black!15] (26.5,0.5) rectangle (22.5,7.5);  
        \draw[thick,fill=black!15] (22.5,0.5) rectangle (18.5,7.5);  
        
%        \draw[very thick,black!15] (4.5,3.5)--(4.5,7.5); % Dashed boundary between $I^+$ and $J_\delta$ 
%        \draw[dashed] (4.5,3.5)--(4.5,7.5)--(0.5,7.5);   % Dashed boundary between $I^+$ and $J_\delta$ 

        \draw[dashed] (30.5,38.5) -- (22.5,38.5); % Outer boundary
        \draw[dashed] (38.5,24.5) -- (38.5,10.5); % Outer boundary
        \draw[thick] (38.5,24.5) -- (38.5,38.5) -- (30.5,38.5);
        \draw[thick] (22.5,38.5) -- (0.5,38.5);
        \draw[thick] (38.5,10.5) -- (38.5,0.5);

% Lattice points
        
        \foreach \x in {1,...,38} {
            \foreach \y in {1,...,38} {
                \node at (\x,\y) [circle,inner sep=0pt,fill=black,minimum size=0.2cm] {};
            }
        }

% Labels

        \node[rectangle,draw,rounded corners,thick,minimum width=2cm,minimum height=2cm,fill=white]  at (11.5,5.5) {$I^+$}; 
        \node[rectangle,draw,rounded corners,thick,minimum width=2cm,minimum height=2cm,fill=white]  at (9.5,2) {$I^-$}; 
        \node[rectangle,draw,rounded corners,thick,minimum width=2cm,minimum height=2cm,fill=white]  at (36.5,4) {$I_1$}; 
        \node[rectangle,draw,rounded corners,thick,minimum width=2cm,minimum height=2cm,fill=white]  at (32.5,4) {$I_2$}; 
%       \node[rectangle,draw,rounded corners,thick,minimum width=2cm,minimum height=2cm,fill=white]  at (28.5,4) {$I_3$}; 
        \node[rectangle,draw,rounded corners,thick,minimum width=2cm,minimum height=1cm,fill=white]  at (26.5,4) {$\cdots$};
        \node[rectangle,draw,rounded corners,thick,minimum width=2cm,minimum height=2cm,fill=white]  at (20.5,4) {$I_\delta$};
        \node[rectangle,draw,rounded corners,thick,minimum width=2cm,minimum height=2cm,fill=white]  at (2.5,35) {$J_1$}; 
        \node[rectangle,draw,rounded corners,thick,minimum width=2cm,minimum height=2cm,fill=white]  at (2.5,28) {$J_2$}; 
%       \node[rectangle,draw,rounded corners,thick,minimum width=2cm,minimum height=2cm,fill=white]  at (2.5,21) {$J_3$}; 
        \node[rectangle,draw,rounded corners,thick,minimum width=1cm,minimum height=2cm,fill=white]  at (2.5,17.5) {\normalsize\begin{tabular}{c}$\bullet$\\$\bullet$\\$\bullet$\\\end{tabular}}; 
        \node[rectangle,draw,rounded corners,thick,minimum width=2cm,minimum height=2cm,fill=white]  at (2.5,7) {$J_\delta$};

\draw [decorate,decoration={brace,amplitude=10pt,mirror,raise=4pt},thick,yshift=0pt] (38.5,0.5) -- (38.5,7.5);
        \node[rectangle] at (39.5,4) {$d$}; 
\draw [decorate,decoration={brace,amplitude=10pt,raise=4pt},thick,yshift=0pt] (0.5,38.5) -- (4.5,38.5);
        \node[rectangle] at (2.5,39.5) {$c$};

\end{tikzpicture}
}
% +++++++++++++++++++++++++++++++++++++++++	END GRID PICTURE
\caption{The division of $[n]^2$ into the shaded border region $Z$, with its subdividing rectangles, and the remainder of the board.  (The illustration shows the case where $I_\delta$ and $J_\delta$ do not overlap.)}
\label{fig:grid}
\end{center}
\end{figure}

In order to calculate the cardinality of a line $\lB(x,y)$ for $(x,y)\in Z$ we pick out special subrectangles in $[n]^2$ (illustrated in Figure~\ref{fig:grid}).  First are the lower and left borders:
$$
I := \{ (x,y) \in [n]^2 : y \leq d \}, \quad 
J := \{ (x,y) \in [n]^2 : x \leq c \}. 
\label{d:IJ}
$$
Define new $c \times d$ rectangles on the bottom edge, from right to left, 
$$
I_i := \{ (x,y)\in I : n-ci < x \leq n-c(i-1) \}  \quad\text{ for }  i=1,\ldots,\delta,
$$
and on the left edge, from the top down, 
$$
J_j := \{ (x,y)\in J : n-dj < y \leq n-d(j-1) \}  \quad\text{ for } j=1,\ldots,\delta .
$$
Thus, $I_1$ occupies the bottom right corner of $[n]^2$ and $J_1$ occupies the top left corner of $[n]^2$.  
Also, $J_\delta$ occupies the upper half of the left end of $I$.  

Then, subdivide the remainder of $Z$ (that is, the part of $I$ to the left of $I_\delta$ and not in $J_\delta$) into lower and upper halves:
\begin{align*}
I^- &:= \{ (x,y) \in I : y \leq \barn, \ 1 \leq x \leq n-c\delta \}, \\
I^+ &:= \{ (x,y) \in I : y > \barn, \ c+1 \leq x  \leq n-c\delta \}.
\end{align*}

There is a critical value of $x$, namely, $n-c\delta,$ such that $\bar k = \lfloor(n-x)/c\rfloor$ if $x > n-c\delta$ and $\bar k = \lfloor(n-y)/d\rfloor$ if $x \leq n-c\delta$.  Hence, we know the size of any line by the formula 
\begin{equation}
|\lB(x,y)| = \bar k + 1 = \begin{cases}
\delta+1,	&\text{ if } (x,y) \in I^-,\\
\delta,	&\text{ if } (x,y) \in I^+,\\
i,	&\text{ if } (x,y) \in I_i \text{ for } i \leq \delta, \\
j,	&\text{ if } (x,y) \in J_j \text{ for } j \leq \delta.
\end{cases}
\label{E:linesizes}
\end{equation}
From Equation~\eqref{E:linesizes} we can write down the multiplicities of all line sizes in the multiset $\bL^{d/c}(n)$ by counting the base points $(x,y)$ in each case.  We obtain the multiplicities stated in the lemma.

The rectangles $I_\delta$ and $J_\delta$ do not overlap if and only if the left end of $I_\delta$, at $x=n-c\delta+1$, is to the right of the right edge of $J_\delta$, at $x=c$; that is, if and only if $n-(c+1)\delta \geq 0$; equivalently, $\delta = \lfloor n/c \rfloor$.  As the width of $I^+$ is exactly $(n-c\delta)-c$, if there is overlap then $I^+$ is the overlap and has negative width, so our computation subtracts exactly the amount necessary to correct for double counting of the lines based at $(x,y) \in I_\delta \cap J_\delta$.  (In this case $I^+ := \{ (x,y) \in I : y > \barn, \ c \geq x  > n-c\delta \}$.)  Thus, our formula works whether or not overlap occurs.
\end{proof}

Now, define $\alpha^{d/c}(n)$\label{d:adc} to be the number of ordered pairs of positions that attack each other along slope $d/c$.  Thus, 
$$
\alpha^{d/c}(n) := \alpha(\cH^{d/c}_{12};n) = E_{(0,1)^4\cap\cH_{12}^{d/c}}(n+1),
$$  
the open Ehrhart quasipolynomial of the subpolytope $[0,1]^4 \cap \cH_{12}^{d/c}$ of $[0,1]^4$ that satisfies the equation of attack, $(z_2-z_1) \cdot (d,-c) = 0$, of $\cH_{12}^{d/c}$.  Counting attacking pairs of positions shows that 
$$
\alpha^{d/c}(n) = \sum_{l \in \bL^{d/c}(n)} l^2.
$$  
The subpolytope is 3-dimensional so the degree of $\alpha^{d/c}(n)$ is 3; therefore its leading coefficient is the relative volume of $[0,1]^4 \cap \cH_{12}^{d/c}$.
Similarly, the number of ordered triples that are collinear along slope $d/c$ is  
$$
\beta^{d/c}(n) := \alpha(\cW^{\,d/c}_{123};n) = E_{(0,1)^6\cap\cW^{\,d/c}_{123}}(n+1) = \sum_{l \in \bL^{d/c}(n)} l^3,
\label{d:bdc}
$$
where $\cW^{\,d/c}_{ijk\ldots} := \cH^{d/c}_{ij} \cap \cH^{d/c}_{jk} \cap \cdots$,\label{d:Wdc} the subspace in which $\pP_i,\pP_j,\pP_k,\ldots$ attack each other along slope $d/c$.
The leading coefficient is the relative volume of $[0,1]^6\cap\cW^{\,d/c}_{123}$.

The values of $\alpha^{d/c}(n)$ et al.\ have to be computed for each slope.  Two easy examples are
\begin{align}
\qquad
\alpha^{0/1}(n) = \alpha^{1/0}(n) &= n^3, & 
\alpha^{\pm1/1}(n) = \sum_{i=1}^n i^2 + \sum_{i=1}^{n-1} i^2 &= \frac{2n^3+n}{3}. 
\label{E:attacklinesQ}
\end{align}
and 
\begin{align}
\beta^{0/1}(n) = \beta^{1/0}(n) &= n^4, & 
\beta^{\pm1/1}(n) &= \frac{n^4+n^2}{2}. 
\label{E:attacklines3Q}
\end{align}
There are general formulas.

\begin{prop}\label{p:attackdc}%P4.2
For relatively prime integers $c\geq0$ and $d>0$ with $c \leq d$, let $\barn:=(n\mod d) \in \{0,1,\ldots,d-1\}$.  
The number of ordered pairs of positions that attack each other along lines of slope $d/c$ is 
\begin{equation}
\begin{aligned}
\alpha^{d/c}(n) =&\ \bigg\{ \frac{3d-c}{3d^2}\, n^3 + \frac{c}{3}\, n \bigg\} 
+ \frac{\barn(d-\barn)}{d^2} \bigg\{ (d-c) n - \frac{c(d-2\barn)}{3} \bigg\}.
\end{aligned}
\label{E:alpha d/c}%(4.3)
\end{equation}
The period of this quasipolynomial is $d$.  

The number of ordered triples of positions that attack each other along a single line of slope $d/c$ is
\begin{equation}
\begin{aligned}
\beta^{d/c}(n) &\;= \bigg\{ \frac{2d-c}{2d^3}\, n^4 + \frac{c}{2d}\, n^2 \bigg\} 
\\&\qquad 
+ \frac{\barn(d-\barn)}{d^3} \bigg\{ 3(d-c) n^2 - (d-2c)(d-2\barn) n + \frac{3c\barn(d-\barn)}{2} \bigg\}.
\end{aligned}
\label{E:alpha d/c 3}
\end{equation}
The period of this quasipolynomial is $d$.  
\end{prop}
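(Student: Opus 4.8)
The plan is to reduce both quantities to finite power sums that are controlled by the multiplicity table of Lemma~\ref{p:bL(n)}, evaluate those sums in closed form, and then read off the period from the resulting $\barn$-dependence. First I would record the starting point already established in the text, namely $\alpha^{d/c}(n) = \sum_{l \in \bL^{d/c}(n)} l^2$ and $\beta^{d/c}(n) = \sum_{l \in \bL^{d/c}(n)} l^3$, so that everything is a matter of summing $l^2$ and $l^3$ against the known multiplicities. The case $c=0$ is degenerate: coprimality together with $c \le d$ forces $d=1$, hence $\barn=0$, and the formulas collapse to the elementary values $\alpha^{0/1}(n)=n^3$ and $\beta^{0/1}(n)=n^4$ of \eqref{E:attacklinesQ} and \eqref{E:attacklines3Q}. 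So I would assume $0 < c \le d$ and invoke the table of Lemma~\ref{p:bL(n)} directly.

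Writing $\delta := \lfloor n/d\rfloor = (n-\barn)/d$ and abbreviating $N := n - c\delta$ for the recurring boundary quantity, the table gives
$$
\alpha^{d/c}(n) = 2cd\sum_{l=1}^{\delta-1} l^{2} + \big[(d-\barn)N + c(\barn+d)\big]\delta^{2} + \barn\, N (\delta+1)^{2},
$$
and the entirely analogous expression for $\beta^{d/c}(n)$ with $l^2,\delta^2,(\delta+1)^2$ replaced by cubes. I would then evaluate the two truncated power sums by the standard identities $\sum_{l=1}^{\delta-1} l^{2} = \tfrac{1}{6}(\delta-1)\delta(2\delta-1)$ and $\sum_{l=1}^{\delta-1} l^{3} = \tfrac14(\delta-1)^2\delta^2$, so that $\alpha^{d/c}$ and $\beta^{d/c}$ become explicit polynomials in $\delta$ and $\barn$.

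The computational heart, and the step I expect to be the main obstacle, is the substitution $\delta = (n-\barn)/d$ (equivalently $N = (d-c)\delta + \barn$) followed by the collection of terms into exactly the stated shape. The difficulty is organizational rather than conceptual: $\barn$ enters in several places, and one must arrange the algebra so that the $\barn$-free contribution produces the leading bracket while every surviving $\barn$-dependent term factors through $\barn(d-\barn)$ and lands inside the grouping $\frac{\barn(d-\barn)}{d^2}\{\cdots\}$ for $\alpha$ (respectively $\frac{\barn(d-\barn)}{d^3}\{\cdots\}$ for $\beta$). To guard against bookkeeping errors I would check the output against the diagonal case $c=d=1$, where the formulas must reduce to $\tfrac{1}{3}(2n^3+n)$ and $\tfrac12(n^4+n^2)$ as in \eqref{E:attacklinesQ}--\eqref{E:attacklines3Q}, and against the small-$n$ regime $\delta=0$, where every line has a single point and the sums must return $n^2$.

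Finally, for the period I would argue as follows. After the substitution, each of $\alpha^{d/c}(n)$ and $\beta^{d/c}(n)$ is a polynomial in $n$ whose coefficients depend on $n$ only through $\barn = n \mod d$; since $\barn$ is $d$-periodic, the period of the quasipolynomial divides $d$. To see it equals $d$ exactly, I would isolate the coefficient of $n$ in $\alpha^{d/c}$, namely $\frac{c}{3} + \frac{d-c}{d^2}\,\barn(d-\barn)$ (and, for $\beta^{d/c}$, the coefficient of $n^2$, namely $\frac{c}{2d} + \frac{3(d-c)}{d^3}\,\barn(d-\barn)$). If the period were a proper divisor $p$ of $d$, the values at $\barn=0$ and $\barn=p$ would have to coincide, forcing $(d-c)\,p(d-p)=0$; but $0<p<d$ and, in the nontrivial range, $c<d$, a contradiction. (When $c=d=1$ the expression is a genuine polynomial and the period is $1=d$.) Hence the period is exactly $d$ in both cases.
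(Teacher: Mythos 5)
Your proposal is correct and follows essentially the same route as the paper: both reduce $\alpha^{d/c}$ and $\beta^{d/c}$ to power sums of line sizes weighted by the multiplicity table of Lemma~\ref{p:bL(n)}, substitute $\delta=(n-\barn)/d$ to obtain the stated quasipolynomials, and establish the exact period $d$ by observing that a particular coefficient (of $n$ for $\alpha$, and an analogous one for $\beta$) equals its invariant value only when $\barn=0$, with $c=d=1$ handled trivially. The only cosmetic differences are your explicit treatment of the degenerate case $c=0$ and your slightly more formal phrasing of the period argument via a proper divisor $p$ of $d$.
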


Each of these quasipolynomials has an invariant part (in the first set of braces), which is independent of $\barn$, i.e., of the residue class of $n$, and a periodic part (in the second set of braces), which depends on $\barn$.  When $n$ is a multiple of $d$, then $\barn=0$ and the equations reduce to the invariant part.  

If the degree is $e$ (which is 3 for $\alpha^{d/c}$ and 4 for $\beta^{d/c}$), the coefficients $\bar\zeta_{e-i}(\barn)$\label{d:barzeta} of $n^{e-i}$ of the periodic part have the alternating symmetry $\bar\zeta_{e-i}(d-\barn) = (-1)^i\bar\zeta_{e-i}(\barn)$.  For instance, in Equation~\eqref{E:alpha d/c} $e=3$ and the periodic part of the coefficient of $n$ (i.e., $i=2$) is $\frac{\barn(d-\barn)}{d^2} (d-c)$, which is invariant under the mapping $\barn \mapsto d-\barn$ (for $1\leq \barn \leq d$).  That is, for any $k \in \bbZ_{>0}$ and any $\barn = 1,2,\ldots,d-1$, $\bar\zeta_i(kd+\barn) = \bar\zeta_i(kd+(d-\barn))$ for all $i$.

The fact that there is no second leading term will be important in examples.

\begin{proof}
The number of attacking pairs is the sum over all lines with slope $d/c$ of $|\lB^{d/c}(b)|^2$.  From Lemma~\ref{p:bL(n)} we can write out the total number:
\begin{align*}
\alpha^{d/c}(n) &= 2cd \sum_{l=1}^{\delta-1} l^2 + [cd+c\barn+(d-\barn)(n-c\delta)] \delta^2 + [\barn(n-c\delta)] (\delta+1)^2,
\end{align*}
which simplifies to Equation~\eqref{E:alpha d/c} after eliminating $\delta$ via $\delta = (n-\barn)/d$.

If $c<d$ the period $d$ follows from examining the coefficient of $n$, which equals $c/3$ only when $\barn=0$.  If $c=d$, then both equal 1 and the period is $d=1$.

The computation for attacking triples is similar.  The total number of such triples is 
\begin{align*}
\beta^{d/c}(n) &= 2cd \sum_{l=1}^{\delta-1} l^3 + [cd+c\barn+(d-\barn)(n-c\delta)] \delta^3 + [\barn(n-c\delta)] (\delta+1)^3,
\end{align*}
which simplifies to Equation~\eqref{E:alpha d/c 3}.  The constant term has period exactly $d$.  
The $n^1$ term also has period $d$, because when $c=d$, both $c$ and $d$ must equal $1$ since they are relatively prime.  
\end{proof}

For the piece $\pP$ we have the formula 
\begin{equation}
a_\pP(2;n) = \sum_{(c,d) \in \M} \alpha^{d/c}(n) - (|\M|-1)n^2 ,
\label{E:alphasum}
\end{equation}
which is the sum over all moves $(c,d) \in \M$ of the number of placements of two labelled pieces that attack along that direction, reduced by the overcount of two pieces on the same square, which should be counted only once per square.  By Equation~\eqref{E:2},
\begin{equation}
u_\pP(2;n) = \frac{1}{2!} o_\pP(2;n) = \frac{1}{2} \Big[ n^4 - \sum_{(c,d) \in \M} \alpha^{d/c}(n) + (|\M|-1)n^2 \Big].
\label{E:2a}
\end{equation}
the number of placements of two labelled pieces that do not attack each other.  Then by Proposition~\ref{p:attackdc} we have an explicit formula for $u_\pP(2;n)$.

\begin{thm}\label{T:u2P}
For $(c,d)\in \M$, let $\hatc := \min(|c|,|d|)$, $\hatd := \max(|c|,|d|)$\label{d:cdhat}, and $\barn := (n \mod \hatd) \in \{0,1,\dots,\hatd-1\}$.  On the square board, 
\begin{equation*}
\begin{aligned}
u_\pP(2;n) &= \frac{1}{2!} o_\pP(2;n) \\
&=   \frac{1}{2} n^4 - \frac{1}{6} \sum_{r=1}^{|\M|} \frac{3\hatd_r-\hatc_r}{\hatd_r^2} n^3 + \frac{|\M|-1}{2} n^2 - \frac{1}{6} \sum_{r=1}^{|\M|} \hatc_r n  \\[3pt]
&\quad 
- \frac{1}{2} \sum_{r=1}^{|\M|} \frac{\barn_r(\hatd_r-\barn_r)(\hatd_r-\hatc_r)}{\hatd_r^2} n + \frac{1}{3} \sum_{r=1}^{|\M|} \frac{\hatc_r(\hatd_r-\barn_r)(\hatd_r - 2\barn_r)\barn_r}{\hatd_r^2}  .
\qed
\end{aligned}
\end{equation*}
The period of $u_\pP(2;n)$ is $\Lambda:=\lcm\{\hatd_r: 1\leq r \leq |\M|\}$.  
\end{thm}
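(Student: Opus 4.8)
The plan is to obtain the formula by substitution and then to pin down the period by isolating its most transparent periodic coefficient.

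\emph{The formula.} I would start from Equation~\eqref{E:2a}, which already writes $u_\pP(2;n)$ as a genuine polynomial in $n$ plus $-\tfrac12\sum_{(c,d)\in\M}\alpha^{d/c}(n)$, so it remains only to insert the closed form of $\alpha^{d/c}(n)$. Proposition~\ref{p:attackdc} assumes $0\le c\le d$, so first I would reduce an arbitrary move to that range using the symmetries of the square board: the reflections $(x,y)\mapsto(n+1-x,y)$ and $(x,y)\mapsto(y,x)$ carry the lines of slope $d/c$ bijectively, and size-preservingly, onto the lines of slope $-d/c$ and of slope $c/d$ respectively. Hence the multiset $\bL^{d/c}(n)$, and with it $\alpha^{d/c}(n)=\sum_{l\in\bL^{d/c}(n)}l^2$, depends only on the unordered pair $\{|c|,|d|\}$. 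Thus $\alpha^{d/c}(n)=\alpha^{\hatd_r/\hatc_r}(n)$ for the coprime pair $0\le\hatc_r\le\hatd_r$, and Proposition~\ref{p:attackdc} applies verbatim. Substituting and grouping by powers of $n$ is then a mechanical calculation yielding the displayed formula: the invariant parts of the $\alpha^{\hatd_r/\hatc_r}$ give the polynomial $n^4,n^3,n^2,n^1$ terms, and their $\barn_r$-dependent parts give the last two (periodic) sums.

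\emph{The period, upper bound.} By Proposition~\ref{p:attackdc} each $\alpha^{\hatd_r/\hatc_r}(n)$ is a quasipolynomial of period $\hatd_r$. Since $u_\pP(2;n)$ is a polynomial minus $\tfrac12\sum_r\alpha^{\hatd_r/\hatc_r}(n)$, its period $P$ divides $\lcm_r\hatd_r=\Lambda$.

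\emph{The period, lower bound (the crux).} The only real work is to show that the individual periods $\hatd_r$ do not partially cancel in the sum, so that $P$ is not strictly smaller than $\Lambda$. My approach is to examine the one coefficient in which cancellation is easiest to exclude, the coefficient $\gamma_1(n)$ of $n^1$. From the formula, $\gamma_1(n)=-\tfrac16\sum_r\hatc_r-\tfrac12\,\psi(n)$, where $\psi(n):=\sum_r \barn_r(\hatd_r-\barn_r)(\hatd_r-\hatc_r)/\hatd_r^{2}$ and $\barn_r=(n\bmod\hatd_r)$. Every summand of $\psi$ is nonnegative, since $\barn_r\ge0$, $\hatd_r-\barn_r\ge1$, and $\hatd_r-\hatc_r\ge0$; moreover the $r$-th summand vanishes exactly when $\hatd_r\mid n$ (those with $\hatc_r=\hatd_r$, forced to be the moves with $\hatd_r=1$, vanish identically and contribute nothing to $\Lambda$). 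Hence $\psi(n)=0$ if and only if $\hatd_r\mid n$ for every $r$, i.e. if and only if $\Lambda\mid n$, so $\gamma_1$ attains its maximum value $-\tfrac16\sum_r\hatc_r$ precisely on $\Lambda\bbZ$. Since $u_\pP(2;n)$ has period $P$, its coefficient $\gamma_1$ satisfies $\gamma_1(n+P)=\gamma_1(n)$, whence the maximizing set $\Lambda\bbZ$ is invariant under translation by $P$; this forces $\Lambda\mid P$. With $P\mid\Lambda$ from the upper bound we conclude $P=\Lambda$.

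The genuinely substantive step is this last non-cancellation argument, and the device that makes it painless is the choice of the coefficient of $n^1$: its periodic part is a sum of nonnegative terms with a controlled zero set, whereas for lower coefficients the signs vary and such a level-set argument would be unavailable.
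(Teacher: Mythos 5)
Your proposal is correct and follows essentially the same route as the paper: the formula is obtained by substituting Proposition~\ref{p:attackdc} (after reducing each move to the normalized pair $(\hatc_r,\hatd_r)$) into Equation~\eqref{E:2a}, and the period is pinned down exactly as the paper does, by observing that each term of the penultimate (coefficient-of-$n$) summation is nonnegative and vanishes precisely when $\hatd_r\mid n$, so that sum vanishes exactly on $\Lambda\bbZ$, forcing the period to be a multiple of $\Lambda$ as well as a divisor of it. The only difference is presentational: you make explicit the square-board symmetry justifying the reduction to $0\le c\le d$ and the level-set argument, both of which the paper leaves implicit.
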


\begin{proof}[Proof of the period bound]
Term $r$ in the penultimate summation has period $\hatd_r$; the term equals 0 only when $\barn_r=0$.  Otherwise the term is positive.  It follows that the sum is 0 only when $n$ is divisible by every $\hatd_r$.  The final summation similarly has period dividing $\Lambda$.
\end{proof}

Notice that the three highest terms are independent of the residue class of $n$.

\begin{cor}\label{C:u2Ptypes}
The number of combinatorial types of nonattacking configuration of two pieces is the number of basic moves.
\end{cor}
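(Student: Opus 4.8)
The plan is to exploit the enumerative meaning of the value $u_\pP(2;-1)$ rather than to argue about regions of the arrangement directly. By Theorem~\Ttypenumber, the number of unlabelled combinatorial types of a nonattacking configuration of $q$ pieces equals $u_\pP(q;-1)$; specializing to $q=2$, it suffices to prove that $u_\pP(2;-1)=|\M|$. I would begin from the closed form \eqref{E:2a},
$$
u_\pP(2;n) = \tfrac12\Big[\, n^4 - \sum_{(c,d)\in\M}\alpha^{d/c}(n) + (|\M|-1)n^2 \,\Big],
$$
and substitute $n=-1$. Since the right-hand side is a fixed $\bbQ$-linear combination of the quasipolynomials $\alpha^{d/c}(n)$ and of $n^4$ and $n^2$, evaluation at $n=-1$ is legitimate term by term (it is an identity of quasipolynomials), and the whole problem collapses to computing the single number $\alpha^{d/c}(-1)$ for each move $(c,d)\in\M$.

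The key step is the claim that $\alpha^{d/c}(-1)=-1$ for \emph{every} move. I would prove this by Ehrhart reciprocity rather than by brute substitution into \eqref{E:alpha d/c}. Recall that $\alpha^{d/c}(n)=E_{(0,1)^4\cap\cH^{d/c}_{12}}(n+1)$ is the \emph{open} Ehrhart quasipolynomial of the subpolytope $P:=[0,1]^4\cap\cH^{d/c}_{12}$, which, as noted just before Proposition~\ref{p:attackdc}, is $3$-dimensional for every slope. Applying Ehrhart--Macdonald reciprocity inside the affine lattice of the hyperplane $\cH^{d/c}_{12}$ gives $E^\circ_P(t)=(-1)^{\dim P}E_P(-t)=-E_P(-t)$. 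Evaluating at $t=n+1=0$ and using the standard fact that a closed Ehrhart quasipolynomial takes the value $1$ at $0$ (its principal constituent has constant term equal to the Euler characteristic $\chi(P)=1$), I get $\alpha^{d/c}(-1)=E^\circ_P(0)=-E_P(0)=-1$, uniformly in $(c,d)$. Substituting back then finishes the computation: $\sum_{(c,d)\in\M}\alpha^{d/c}(-1)=-|\M|$, so
$$
u_\pP(2;-1)=\tfrac12\big[\,1+|\M|+(|\M|-1)\,\big]=|\M| ,
$$
as required.

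The only delicate points are bookkeeping, not substance. The main thing to get right is that reciprocity must be applied relative to the $3$-dimensional subspace $\cH^{d/c}_{12}$, so that both the sign $(-1)^3$ and the normalization $E_P(0)=1$ are taken in the correct sublattice; once the dimension is pinned down (which the text already supplies), the constancy $\alpha^{d/c}(-1)=-1$ is automatic and move-independent, which is exactly what makes the count come out to the bare number $|\M|$. A more computational alternative would be to substitute $n=-1$ directly into the explicit formula of Theorem~\ref{T:u2P}, setting $\barn_r=(-1\bmod\hatd_r)=\hatd_r-1$ for $\hatd_r\ge2$ and $\barn_r=0$ for $\hatd_r=1$; this also yields $|\M|$, but it requires tracking the residues and verifying the cancellation of the periodic terms, which is precisely the nuisance the reciprocity argument sidesteps.
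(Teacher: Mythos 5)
Your argument is correct, but it is not the route the paper takes. The paper's proof of this corollary is essentially a back-reference plus a consistency check: the statement was already established geometrically in Proposition~\Ptwopiecetypes, and the authors merely remark that substituting $n=-1$ (so $\barn_r=\hatd_r-1$) into the explicit formula of Theorem~\ref{T:u2P} returns $|\M|$, thereby \emph{checking} that theorem rather than proving the corollary anew. You instead give a genuinely self-contained derivation from Theorem~\Ttypenumber: starting from Equation~\eqref{E:2a} and showing $\alpha^{d/c}(-1)=-1$ uniformly in the slope via Ehrhart--Macdonald reciprocity applied to the $3$-dimensional polytope $[0,1]^4\cap\cH^{d/c}_{12}$ (using $E_P(0)=1$ and the fact, already invoked in the proof of Theorem~\ref{T:parity}, that $(0,1)^4\cap\cH^{d/c}_{12}$ is the relative interior of that polytope). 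Your evaluation checks out against Equation~\eqref{E:alpha d/c}: with $\barn=d-1$ the numerator collapses to $-3d^2$, giving $-1$ independently of $(c,d)$. What your approach buys is exactly what you say: it replaces the residue bookkeeping and cancellation of periodic terms, which the paper explicitly omits, by a one-line reciprocity computation that is manifestly move-independent; what it gives up is the independent geometric confirmation, since both Theorem~\Ttypenumber\ and Equation~\eqref{E:2a} feed off the same inside-out Ehrhart machinery, so your argument does not serve the paper's secondary purpose of cross-checking Theorem~\ref{T:u2P} against Proposition~\Ptwopiecetypes. The only point worth stating explicitly if you write this up is that reciprocity and the normalization $E_P(0)=1$ are taken in the affine lattice $\bbZ^4\cap\cH^{d/c}_{12}$, which you do flag.
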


\begin{proof}
We already proved this geometrically in Proposition~\Ptwopiecetypes; therefore, evaluating $u_\pP(2;-1)$, which is the number of types by Theorem~\Ttypenumber, checks the correctness of Theorem~\ref{T:u2P}.  We omit the computation, noting only that $\barn_r=\hatd_r-1$ and the result is $|\M|$, as it should be.
\end{proof}

%%%%%%%%%%%%%%%%%%%%%%%%%%%%
\sectionpage\section{Coefficients of Subspace Ehrhart Functions}\label{ehrhartcoeffs}

The more we can say about the open Ehrhart quasipolynomials $\alpha(\cU;n)$ of subspaces, the more we can infer about the configuration counting functions.

%=====
\subsection{Odd and even functions}\label{oddeven}\

The square board has a property that few other boards share (an isosceles right triangle with a side parallel to an axis being one that does).  We remind the reader that a function $f(n)$ of an integer $n$ is called \emph{even} or \emph{odd} if it satisfies $f(-n)=f(n)$ or, respectively, $f(-n)=-f(n)$.  

\begin{thm}[Parity Theorem]\label{T:parity}
Consider the square board with any piece $\pP$.  Let $\cU\in\cL(\cA_\pP)$.  The function $\alpha(\cU;n)$ is an odd function of $n$ if $\dim\cU$ is odd and an even function if $\dim\cU$ is even.
\end{thm}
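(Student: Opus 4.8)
The plan is to read off the parity of $\alpha(\cU;n)$ from two facts about the slice polytope $P := [0,1]^{2\kappa}\cap\tcU$: Ehrhart--Macdonald reciprocity and a translation symmetry special to the square board. Write $e := \dim\tcU$. Since the $q-\kappa$ uninvolved pieces each contribute $2$ free dimensions, $\dim\cU = e + 2(q-\kappa)$, so $\dim\cU \equiv e \pmod{2}$ and it suffices to prove the single identity $\alpha(\cU;-n) = (-1)^e\,\alpha(\cU;n)$. By definition $\alpha(\cU;n) = E^\circ_P(n+1)$, where $P$ is a rational polytope of dimension $e$: it is full-dimensional in $\tcU$ because $\tcU$ contains the cube's center $\tfrac12\one$, which lies in the interior of $[0,1]^{2\kappa}$.

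First I would record reciprocity: for the rational polytope $P$ of dimension $e$, Ehrhart--Macdonald reciprocity gives the quasipolynomial identity $E^\circ_P(t) = (-1)^e E_P(-t)$. Second, and this is the crucial square-board input, I claim the shift identity $E^\circ_P(t) = E_P(t-2)$. The attack equations $d(x_j-x_i)=c(y_j-y_i)$ cutting out $\tcU$ are homogeneous in the coordinate differences, so the all-ones vector $\one\in\bbR^{2\kappa}$ satisfies every one of them; hence $\one\in\tcU$ and translation by $\one$ preserves $\tcU$. Because $\tcU$ meets the interior of the cube, the relative interior of $tP$ is $\{\bz\in\tcU : 1\le z_i\le t-1\}$, while $(t-2)P$ meets $\bbZ^{2\kappa}$ in $\{\bz\in\tcU : 0\le z_i\le t-2\}$; the map $\bz\mapsto\bz-\one$ is a bijection between these two lattice-point sets, which proves the shift identity.

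Combining the two ingredients gives the result directly:
\[
\alpha(\cU;-n) = E^\circ_P(1-n) = (-1)^e E_P(n-1) = (-1)^e E^\circ_P(n+1) = (-1)^e\,\alpha(\cU;n),
\]
where the second equality is reciprocity at $t=1-n$ and the third is the shift identity at $t=n+1$. The only step carrying genuine content is the shift identity; everything else is formal. I expect it to be the sole obstacle: it requires correctly pinning down the relative interior of the slice and, above all, verifying that $\one\in\tcU$. This last fact — immediate from the homogeneity of the attack equations in the coordinate differences — is precisely the structural feature that makes the square board special, since it is what allows a single integer translation to carry the interior of $tP$ onto all of $(t-2)P$.
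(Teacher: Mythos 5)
Your proof is correct and follows essentially the same route as the paper's: the shift identity $E^\circ_P(t)=E_P(t-2)$, justified by the translation by $\one$ (which preserves $\tcU$ because the attack equations are homogeneous in coordinate differences — the paper phrases this as the square board being Gorenstein of index~2), combined with Ehrhart reciprocity on the slice $[0,1]^{2\kappa}\cap\tcU$, which is full-dimensional in $\tcU$ since $\tcU$ meets the open cube. The only quibble is notational: the set $\{\bz\in\tcU : 1\le z_i\le t-1\}$ is the set of lattice points of the relative interior of $tP$, not the relative interior itself, but your intended bijection is exactly the one the paper uses.
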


\begin{proof}
The crucial property of the square board that makes the theorem true is that the interior lattice points, $(n+1)(0,1)^2\cap\bbZ^2$, are isomorphic to all the lattice points, $(n-1)[0,1]^2\cap\bbZ^2$, by a translation.\footnote{A polytope with this property is called Gorenstein with index 2.  (We thank a referee for this observation.)  Any Gorenstein board with index 2, such as the aforementioned right triangle, will satisfy the Parity Theorem.}  The equations of the attack hyperplanes are invariant under that translation, so the two sets of lattice points are equivalent for the inside-out Ehrhart theory of $(\cube,\cA_\pP)$.  Therefore, 
$$
\alpha(\cU;n) := E_{\ocube\cap\tcU}(n+1) = E_{\cube\cap\tcU}(n-1).
$$
Because every $\cU\in\cL(\cA_\pP)$ meets the open hypercube, $\ocube\cap\tcU = (\cube\cap\tcU)^\circ$ and both have dimension $\dim\tcU$.  Therefore, 
$E_{\ocube\cap\tcU}(t) = E_{\cube\cap\tcU}^\circ(t).$
By Ehrhart reciprocity, 
\begin{align*}
E_{\cube\cap\tcU}(n-1) 
&= (-1)^{\dim\tcU} E_{\cube\cap\tcU}^\circ(-(n-1)) 
= (-1)^{\dim\tcU} E_{\cube\cap\tcU}^\circ(-n+1) \\
&= (-1)^{\dim\tcU} E_{\ocube\cap\tcU}(-n+1) = (-1)^{\dim\cU} \alpha(\cU;-n).
\end{align*}
Since $\dim\cU\equiv\dim\tcU \mod 2$, that concludes the proof.
\end{proof}

Oddness or evenness of $\alpha(\cU;n)$ should not be confused with that of its constituent polynomials.  The correct constituent properties are the following.  Let $p(\cU)$ denote the period of $\alpha(\cU;n)$.

\begin{cor}[Constituent Parity]\label{C:parity}
The constituent $\alpha_0(\cU;n)$ is an odd function of $n$ if $\dim\cU$ is odd and an even function if $\dim\cU$ is even.  If $p(\cU)$ is even, the middle constituent $\alpha_{p(\cU)/2}(\cU;n)$ is also odd or even, respectively.  For $i\in [p(\cU)-1]$, there is the relation $\alpha_{p(\cU)-i}(\cU;n) = (-1)^{\dim\cU}\alpha_i(\cU;-n).$
\end{cor}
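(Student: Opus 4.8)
The plan is to reinterpret the Parity Theorem (Theorem~\ref{T:parity}) as a single functional equation and then read off its consequences for the individual constituent polynomials. Writing $\epsilon := (-1)^{\dim\cU}$, Theorem~\ref{T:parity} asserts precisely that $\alpha(\cU;-n) = \epsilon\,\alpha(\cU;n)$ for every integer $n$. I would combine this with the definition of the constituents: if $p := p(\cU)$ is the period, then $\alpha(\cU;n) = \alpha_{(n\bmod p)}(\cU;n)$, where each $\alpha_i(\cU;\cdot)$ is a genuine polynomial.

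First I would track what the substitution $n \mapsto -n$ does to residue classes. If $n \equiv i \pmod{p}$, then $-n \equiv p-i \pmod{p}$ when $1 \le i \le p-1$, while $-n \equiv 0 \pmod{p}$ when $i=0$. Feeding this into the functional equation, for every integer $n$ in the class $i$ I obtain $\alpha_{p-i}(\cU;-n) = \epsilon\,\alpha_i(\cU;n)$. The two sides are polynomials in $n$ that agree on the infinite arithmetic progression $\{n : n\equiv i \pmod{p}\}$, hence they coincide as polynomials; replacing $n$ by $-n$ then yields the stated relation $\alpha_{p-i}(\cU;n) = \epsilon\,\alpha_i(\cU;-n)$ for all $i \in [p-1]$.

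The two parity assertions are exactly the fixed points of the involution $i \mapsto p-i$ on residue classes. For $i=0$ the argument above specializes to $\alpha_0(\cU;-n) = \epsilon\,\alpha_0(\cU;n)$, so $\alpha_0$ is even when $\dim\cU$ is even and odd when $\dim\cU$ is odd. When $p$ is even, $i=p/2$ is also fixed, since $-p/2 \equiv p/2 \pmod{p}$; the same specialization gives $\alpha_{p/2}(\cU;-n) = \epsilon\,\alpha_{p/2}(\cU;n)$, which is the parity of the middle constituent.

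I expect the only point requiring any care---the \emph{main obstacle}, such as it is---to be the upgrade from a pointwise identity valid on a single residue class to a genuine identity of polynomials, together with the bookkeeping of residues modulo $p$ (in particular recognizing $0$ and $p/2$ as the self-paired classes, the latter only when $p$ is even). Both steps are routine once stated precisely, so the corollary should follow quickly from Theorem~\ref{T:parity}.
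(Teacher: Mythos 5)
Your proof is correct and is exactly the argument the paper intends: the corollary is stated without proof as an immediate consequence of the Parity Theorem, and your derivation (apply $\alpha(\cU;-n)=(-1)^{\dim\cU}\alpha(\cU;n)$, track the residue class swap $i\mapsto p-i$ under negation, and upgrade from agreement on an arithmetic progression to a polynomial identity) is the natural way to fill in that omitted step. The identification of $i=0$ and, for even $p$, $i=p/2$ as the self-paired classes correctly yields the two parity assertions.
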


Thus, if the period is 2, indeed each constituent is an odd or even polynomial, but that is not necessarily so for any larger period.  For example, the contribution of a hyperplane is $\alpha(\cH^{d/c}_{12};n) = \alpha^{d/c}(n)$, given by Equation~\eqref{E:alpha d/c}, with period $d$.  When $d\leq 2$ we get an odd polynomial in $n$ but since $d>2$ gives a nonzero constant term the polynomial is no longer odd.

The preceding corollary can be strengthened by focussing on individual terms.  Let $\cU$ involve $\kappa$ pieces and have codimension $\nu$, and write 
\begin{equation}
\alpha(\cU;n) := \sum_{j=0}^{2\kappa-\nu} \bar\gamma_j(\cU)n^{2\kappa-\nu-j}
\label{E:subcoeffs}
\end{equation}
and define $p_j(\cU)$ to be the (smallest) period of $\bar\gamma_j(\cU)$, which may be less than the period $p(\cU)$ of $\alpha(\cU;n)$ (indeed, the latter equals $\lcm_j p_j(\cU)$).  Thus, $\bar\gamma_j(\cU)$ cycles through the functions $\bar\gamma_{0j}(\cU),\ldots,\bar\gamma_{p_j(\cU),j}(\cU)$.  If $p(\cU)>p_j(\cU)$, then $\bar\gamma_{ij}(\cU)$ cycles through more than one period as $\alpha_i(\cU;n)$ goes through one of its periods.
(Notational note:  We employ $\gamma_i$ for coefficients of an unlabelled counting function $u_\pP(q;n)$ and $\bar\gamma_i$ for a coefficient of a labelled counting function $o_\pP(q;n)$ or $\alpha(\cU;n)$.)

\begin{cor}[Coefficient Parity]\label{C:coeffcycle}
Let $0\leq i<p_j(\cU)$.  The constituents of $\bar\gamma_j(\cU)$ satisfy $\bar\gamma_{p_j(\cU)-i,\,j}(\cU) = (-1)^{\dim\cU-j}\bar\gamma_{ij}(\cU).$

Assume $j \not\equiv \dim\cU \pmod 2$; then $\bar\gamma_{0j}(\cU)=0$ and (if $p_j(\cU)$ is even) $\bar\gamma_{p_j(\cU)/2,\,j}(\cU)=0$.  In particular, if $p_j(\cU)\leq2$ then $\bar\gamma_j(\cU)=0$.  
\end{cor}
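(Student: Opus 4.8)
The statement is a coefficient-by-coefficient reading of a functional equation already in hand, so the plan is to expand $\alpha(\cU;n)$ into monomials in $n$ and compare powers. The only input I will use is the Parity Theorem in the form $\alpha(\cU;-n)=(-1)^{\dim\cU}\alpha(\cU;n)$ (Theorem~\ref{T:parity}), or equivalently its constituent version from Corollary~\ref{C:parity}.

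First I would fix a residue $i$ of $n$ modulo the full period $p(\cU)$, so that on that class $\alpha(\cU;n)$ is an honest polynomial $\sum_{j}\bar\gamma_{ij}(\cU)\,n^{2\kappa-\nu-j}$ with constant coefficients (here $2\kappa-\nu=\dim\tcU$ is the degree, from \eqref{E:subcoeffs}). Substituting $n\mapsto -n$ does two independent things: it sends the residue $i$ to $-i$, so the relevant constituent is now $\bar\gamma_{-i,\,j}(\cU)$; and it multiplies the monomial $n^{2\kappa-\nu-j}$ by $(-1)^{2\kappa-\nu-j}$. Comparing the coefficient of $n^{2\kappa-\nu-j}$ on the two sides of the Parity identity then gives
\[
\bar\gamma_{-i,\,j}(\cU)\;=\;(-1)^{\dim\cU}\,(-1)^{2\kappa-\nu-j}\,\bar\gamma_{ij}(\cU).
\]
Since $\dim\cU\equiv\dim\tcU=2\kappa-\nu\pmod 2$, the two sign exponents add to $\dim\cU+(2\kappa-\nu)-j\equiv -j\pmod 2$, so the prefactor collapses to $(-1)^{j}$, in agreement with the alternating symmetry of the $\bar\zeta$-coefficients recorded just before the statement.

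Next I would descend from the full period $p(\cU)$ to the period $p_j(\cU)$ of the single coefficient $\bar\gamma_j(\cU)$. Because $\bar\gamma_j(\cU)$ depends on $n$ only through $n\bmod p_j(\cU)$, the identity above (valid for every integer residue) reads modulo $p_j(\cU)$ as the desired symmetry $\bar\gamma_{p_j(\cU)-i,\,j}(\cU)=(-1)^{j}\bar\gamma_{ij}(\cU)$, with $p_j(\cU)-i\equiv -i$. The vanishing claims come from the fixed points of the involution $i\mapsto -i$ on $\bbZ/p_j(\cU)\bbZ$: namely $i\equiv 0$, and $i\equiv p_j(\cU)/2$ when $p_j(\cU)$ is even. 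At such a fixed point the symmetry forces $\bar\gamma_{ij}(\cU)=(-1)^{j}\bar\gamma_{ij}(\cU)$, hence $\bar\gamma_{ij}(\cU)=0$ exactly when the sign is $-1$, i.e.\ when $j$ is odd. If moreover $p_j(\cU)\le 2$ then every residue is a fixed point, so an odd $j$ annihilates all constituents and $\bar\gamma_j(\cU)\equiv 0$.

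The underlying mathematics is light once the Parity identity is granted; the effort is entirely in the bookkeeping, and that is where I expect the only real care to be needed. One must restrict to a single residue class before ``the coefficient of $n^{2\kappa-\nu-j}$'' even makes sense; one must track that $n\mapsto -n$ permutes residue classes (sending $i$ to $-i$) rather than fixing them; and one must check that the relation derived modulo $p(\cU)$ genuinely descends to the finer modulus $p_j(\cU)$. The single computation most prone to a slip is the combination of the two independent signs, the global $(-1)^{\dim\cU}$ from parity and the $(-1)^{2\kappa-\nu-j}$ from the monomial, together with their reduction modulo $2$ using $\dim\cU\equiv 2\kappa-\nu$.
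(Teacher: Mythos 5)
Your derivation is the right one, and in fact the only one available: the paper states this corollary without a separate proof, the intended argument being exactly what you do---expand the Parity Theorem identity $\alpha(\cU;-n)=(-1)^{\dim\cU}\alpha(\cU;n)$ constituent by constituent and compare coefficients of $n^{2\kappa-\nu-j}$. Your bookkeeping is sound throughout: restricting to a residue class before extracting coefficients, tracking that $n\mapsto-n$ sends the constituent index $i$ to $-i$, descending from $p(\cU)$ to the finer period $p_j(\cU)$, and reading the vanishing statements off the fixed points of $i\mapsto-i$.

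What you must not let pass silently is that the sign you (correctly) obtain, $(-1)^j$, is \emph{not} the sign $(-1)^{\dim\cU-j}$ in the statement; they differ by the factor $(-1)^{\dim\cU}$ and so disagree whenever $\dim\cU$ is odd, which already happens for a single hyperplane. Your sign is the right one: in Equation~\eqref{E:alpha d/c} the constant term of $\alpha^{d/c}$ (the case $\cU=\cH^{d/c}_{12}$, $j=3$) is $-\frac{c}{3d^2}\,\barn(d-\barn)(d-2\barn)$, which changes sign under $\barn\mapsto d-\barn$ as $(-1)^3$ predicts, while $(-1)^{\dim\cU-3}=+1$ would force it to be symmetric; and the coefficient of $n$ there ($j=2$) is symmetric, matching $(-1)^2$ and contradicting $(-1)^{\dim\cU-2}=-1$. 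The same correction applies to the vanishing clause, whose hypothesis should be ``$j$ odd'' rather than ``$j\not\equiv\dim\cU\pmod 2$'' (these coincide only for even-dimensional $\cU$); note that the proof of Theorem~\ref{T:nearleading zero} invokes the corollary in precisely this corrected form, deducing $\bar\gamma_1(\cU)=0$ from $\pi_1=1$ for an arbitrary $\cU$. So your proof is correct, but it establishes a corrected statement, not the printed one: say so explicitly, rather than writing $(-1)^j$ and moving on, since a reader comparing your conclusion against the corollary as stated will otherwise conclude that the sign error is yours.
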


Note that $\dim\cU$ can be replaced in these formulas by $\dim\tcU$ since they have the same parity.

%=====
\subsection{The second leading coefficient of a subspace Ehrhart function}\label{2ndehrhartcoeff}\

We saw in Proposition~\ref{p:attackdc} that $\alpha(\cU^\kappa;n)$ has no second leading term (the term of $n^{2\kappa-\codim\cU-1}$) when $\cU$ is either a hyperplane $\cH^{d/c}_{ij}$ or a subhyperplane of the form $\cW^{\,d/c}_{ijk}$.  This is a general phenomenon.

\begin{thm}\label{T:nearleading zero}
For every subspace $\cU \in \cL(\cA_\pP)$, the coefficient $\bar\gamma_1(\cU)$ of the second leading term in $\alpha(\cU;n)$ is zero.  
\end{thm}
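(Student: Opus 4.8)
The plan is to reduce the statement to a fact about the \emph{closed} Ehrhart quasipolynomial of the single polytope $\bar P := [0,1]^{2\kappa}\cap\tcU$ and then to finish with the Parity Theorem. Exactly as in the proof of Theorem~\ref{T:parity}, the index-$2$ Gorenstein translation gives $\alpha(\cU;n) = E^\circ_{\bar P}(n+1) = E_{\bar P}(n-1)$. Writing $D := \dim\tcU = 2\kappa-\nu$ for the degree and $E_{\bar P}(t) = e_0\,t^D + e_1(t)\,t^{D-1}+\cdots$ for the closed Ehrhart quasipolynomial, substitution of $t=n-1$ shows that the coefficient of $n^{D-1}$ in $\alpha(\cU;n)$ is $\bar\gamma_1(\cU;n) = e_1(n-1) - D e_0$ (only the two leading Ehrhart terms of $E_{\bar P}$ can contribute to $n^{D-1}$). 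Thus it suffices to prove that $e_1$ is a \emph{constant} (period $1$) and then that the resulting constant value of $\bar\gamma_1(\cU)$ must vanish.

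First I would show $e_1$ is constant by a facet analysis. Since $\tcU$ passes through the center $\tfrac12\one$ of the cube, $\bar P$ is full-dimensional in $\tcU$, so $\operatorname{aff}\bar P=\tcU$ and the relevant lattice is $\Lambda:=\tcU\cap\bbZ^{2\kappa}$. Because $\tcU$ meets the interior of the cube, no facets of $\bar P$ are created other than those cut off by facets of the cube; hence every facet of $\bar P$ lies on a coordinate hyperplane $\{z_a=0\}$ or $\{z_a=1\}$. I claim each of these is a \emph{lattice} hyperplane of $(\tcU,\Lambda)$: the linear form $z_a$ is integral, and the hyperplane contains a point of $\Lambda$, namely $\zero\in\tcU$ when the offset is $0$ and the all-ones vertex $\one\in\tcU$ when the offset is $1$ (both satisfy every attack equation $(z_j-z_i)\cdot(d,-c)=0$ since all differences $z_j-z_i$ vanish). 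A rational hyperplane of $\tcU$ containing a lattice point is a translate of a rank-$(D-1)$ sublattice, i.e.\ a lattice hyperplane. By the standard description of the second Ehrhart coefficient of a rational polytope---whose periodic part is controlled by the denominators of the facet-defining hyperplanes---a polytope all of whose facets lie on lattice hyperplanes has constant second coefficient $e_1=\tfrac12\sum_F\vol(F)$ (relative volumes). Hence $\bar\gamma_1(\cU)=e_1-D e_0$ is a constant.

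Finally I would invoke the Parity Theorem. Since $\dim\tcU\equiv\dim\cU\pmod 2$, Theorem~\ref{T:parity} gives $\alpha(\cU;-n)=(-1)^{D}\alpha(\cU;n)$. Now that $e_0$ and $\bar\gamma_1(\cU)$ are both constant, I can compare the coefficients of $n^{D-1}$ directly: the left side contributes $(-1)^{D-1}\bar\gamma_1(\cU)$ while the right side contributes $(-1)^{D}\bar\gamma_1(\cU)$. These are equal only if $\bar\gamma_1(\cU)=-\bar\gamma_1(\cU)$, so $\bar\gamma_1(\cU)=0$. (As a pleasant by-product, this yields the Minkowski-type identity $\tfrac12\sum_F\vol(F)=D\,\vol(\bar P)$.)

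The main obstacle is the constancy step. Parity by itself only relates $e_1(n-1)$ to $e_1(-n-1)$---reciprocity forces $e_1(n-1)+e_1(-n-1)=2De_0$---but it cannot pin down a genuinely periodic $e_1$; the real content is that the facets of $\bar P$ sit on \emph{integral} hyperplanes, which kills the periodic part of the second Ehrhart coefficient. I would be careful to state precisely the rational-Ehrhart input that the period of the $(D-1)$st coefficient divides the least common multiple of the facet-hyperplane denominators, and to confirm that each facet of $\bar P$ indeed lies on a single coordinate hyperplane. As a sanity check, the data behind Proposition~\ref{p:attackdc} for $\cU=\cH^{2/1}_{12}$ give $e_0=5/12$ and $D=3$, so $\bar\gamma_1=e_1-De_0=0$, in agreement with the explicit formula there.
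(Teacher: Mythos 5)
Your proposal is correct and follows essentially the same route as the paper: both establish that the second coefficient is constant by observing that every facet of $[0,1]^{2\kappa}\cap\tcU$ lies on a coordinate hyperplane containing an integral diagonal point (the paper uses the point $\Delta(k,1)$ and cites McMullen's period theorem, while you use $\zero$ and $\one$ together with the equivalent facet-denominator description of the second Ehrhart coefficient), and then both invoke the Parity Theorem to force the resulting constant to vanish. The only cosmetic difference is that you pass through the closed Ehrhart quasipolynomial via the Gorenstein translation, whereas the paper applies McMullen's theorem directly to the open one.
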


\begin{proof}
We apply the theorem of McMullen~\cite[Theorem 6]{McM} that in the (open) Ehrhart quasipolynomial $\gamma_0(\cP) n^d + \gamma_1(\cP) n^{d-1} + \cdots + \gamma_d(\cP)$ of a rational convex polytope $\cP$ of dimension $d$, the coefficient $\gamma_i(\cP)$ has period that is a divisor of a quantity $\pi_i$ called the $i$-index of $\cP$,\label{d:pi} defined as the smallest positive integer $\pi$ such that every $(d-i)$-face of $\cP$ contains a rational point (it need not be in lowest terms) with denominator $\pi$.  (This is equivalent to the face's affine span being generated by rational points with denominator $\pi$, which is McMullen's definition.)  Thus, for instance, if every facet of $\cP$ spans an affine flat that contains an integral point, the 1-index of $\cP$ is 1, i.e., $\bar\gamma_1(\cP)$ is constant.

We apply McMullen's theorem to $\cP = \cU \cap [0,1]^{2q}$ where $\cU \in \cL(\cA_\pP)$; that is, $\cP$ is the part of the subspace $\cU$ bounded by the inequalities $x_i,y_i\geq0$ and $x_i,y_i\leq1$ for $i\in[2q]$.  A face of $\cP$ is the restriction of $\cP$ to some subset of boundary hyperplanes; i.e., we fix some set of $x_i$'s and $y_i$'s to be 0 and some other set to be 1.

The equations of $\cU$ have the form $z_j-z_i \perp (d_r,-c_r)$ for some $i<j$ and $(c_r,d_r)\in\M$.  The equation means that $z_j-z_i$ is parallel to $m_r$.  If $\cU$ satisfies $z_j-z_i \parallel m_r$ for more than one basic move $m_r$, then $z_i=z_j$ so we can reduce $q$ by identifying the $i$th and $j$th pieces; therefore we may assume that no pair of pieces appears in more than one equation satisfied by $\cU$.

First, consider $i=1$; that means we choose one $x_i$ or $y_i$ to be 0 or 1.  Let that be $x_1$.  Define $\Delta z := (z,z,\ldots,z)\in\bbR^{2q}$.  All such points with $z\in[0,1]^2$ belong to $\cP$; therefore in particular, each facet $\cP \cap \{x_1=k\}$ (where $k=0$ or 1) contains the integral point $\Delta(k,1)$.  Consequently, the index $\pi_1=1$.  The Parity Theorem~\ref{T:parity} implies then that $\bar\gamma_1(\cU) = 0$.
\end{proof}

%%%%%%%%%%%%%%%%%%%%%%%%%%%%
\sectionpage\section{The Form of Coefficients on the Square Board}\label{coeffsformsquare}\

On the square board it is possible to get fairly detailed information about the highest-order coefficients of the counting functions of nonattacking configurations.

First we list the exact contributions to $o_\pP(q;n)$ of subspaces with low codimension.  
For $(c,d)\in \M$ let $\hatc=\min(|c|,|d|)$ and $\hatd=\min(|c|,|d|)$ and define $\barn:=(n\mod\hatd)\in\{0,1,\ldots,\hatd-1\}$.  

\begin{lem}\label{L:total01}
The total contribution to $o_\pP(q;n)$ of the subspace of codimension $0$ is $n^{2q}$.

The total contribution of the subspaces of codimension $1$ is $-\binom{q}{2}A_1(n)n^{2q-4}$ where 
\begin{equation}
A_1(n) := \sum_{m\in\M} \alpha^m(n) = a_{10}n^3 + a_{12} n + a_{13}
\label{E:A1}
\end{equation}
with
\begin{align*}
a_{10} &= \sum_{(c,d)\in\M} \frac{3\hatd-\hatc}{3\hatd^2} , \\
a_{12} &= \sum_{(c,d)\in\M} \frac{ \hatc\hatd^2 + 3(\hatd-\hatc) \nhatd(\hatd-\nhatd) }{3\hatd^2} , \\ 
a_{13} &= - \sum_{(c,d)\in\M} \frac{\hatc}{3\hatd^2} \nhatd(\hatd-\nhatd)(\hatd-2\nhatd) .
\end{align*}
The period of $A_1(n)$, as well as that of $a_{12}$, is $\Lambda:=\lcm\{\hatd_r: 1\leq r \leq |\M|\}$.  
\end{lem}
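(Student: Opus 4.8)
The plan is to read off both totals from the inside-out Möbius expansion of the labelled counting function over the intersection lattice. By the inside-out Ehrhart theory of Part~I,
\[
o_\pP(q;n) = \sum_{\cU\in\cL(\cA_\pP)} \mu(\hat 0,\cU)\,\alpha(\cU;n)\,n^{2q-2\kappa(\cU)},
\]
where $\hat 0=\bbR^{2q}$ is the bottom of $\cL(\cA_\pP)$, $\kappa(\cU)$ is the number of pieces $\cU$ involves, and the factor $n^{2q-2\kappa(\cU)}$ records that the coordinates of the $q-\kappa$ uninvolved pieces range freely over the $n$ interior points of each edge (this is exactly the ``actual open Ehrhart quasipolynomial'' $\alpha(\cU;n)n^{2q-2\kappa}$ of the excerpt). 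First I would isolate the strata of codimension $0$ and $1$. For codimension $0$ the only element is $\hat 0$ itself, with $\kappa=0$, $\mu(\hat 0,\hat 0)=1$, and $\alpha(\hat 0;n)=1$, so its contribution is $n^{2q}$.

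Next I would treat codimension $1$. Its elements are exactly the atoms of $\cL(\cA_\pP)$, namely the hyperplanes $\cH^{d/c}_{ij}$ with $(c,d)\in\M$ and $1\le i<j\le q$; these are pairwise distinct since the moves are non-parallel, so there are $\binom{q}{2}|\M|$ of them, each with $\kappa=2$, $\mu(\hat 0,\cH^{d/c}_{ij})=-1$, and $\alpha(\cH^{d/c}_{ij};n)=\alpha^{d/c}(n)$. Summing gives
\[
-\sum_{1\le i<j\le q}\ \sum_{(c,d)\in\M}\alpha^{d/c}(n)\,n^{2q-4}
= -\binom{q}{2}\Big(\sum_{(c,d)\in\M}\alpha^{d/c}(n)\Big)n^{2q-4}
= -\binom{q}{2}A_1(n)\,n^{2q-4}.
\]
To evaluate $A_1(n)=\sum_{m\in\M}\alpha^m(n)$ I would use the reflective symmetry of the square board: reflection in the main diagonal and in the axes shows that $\alpha^{d/c}(n)$ depends only on the unordered pair $\{|c|,|d|\}$, so Proposition~\ref{p:attackdc} applies to each move with $(c,d)$ replaced by $(\hatc,\hatd)$, where $0\le\hatc\le\hatd$ and $\gcd(\hatc,\hatd)=1$. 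Collecting in Equation~\eqref{E:alpha d/c} the coefficients of $n^3$, $n^1$, and $n^0$ and summing over $\M$ then reproduces $a_{10}$, $a_{12}$, and $a_{13}$ verbatim.

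For the periods, each $\alpha^m(n)$ has period $\hatd$ by Proposition~\ref{p:attackdc}, so $A_1(n)$ has period dividing $\Lambda=\lcm_r\hatd_r$. Since $a_{10}$ is constant and the period of $A_1$ is the least common multiple of the periods of its coefficients $a_{10},a_{12},a_{13}$, it suffices to show that $a_{12}$ has period exactly $\Lambda$. Writing
\[
a_{12}=\tfrac{1}{3}\sum_r\hatc_r + P(n), \qquad
P(n)=\sum_r\frac{(\hatd_r-\hatc_r)\,\barn_r(\hatd_r-\barn_r)}{\hatd_r^2},
\]
every summand of $P$ is nonnegative and vanishes exactly when $\barn_r=0$ or $\hatd_r=1$ (the latter forcing $\hatc_r=\hatd_r=1$ by coprimality, whence $\barn_r=0$ as well), so $P(n)=0$ if and only if $\hatd_r\mid n$ for every $r$, i.e.\ $\Lambda\mid n$. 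This is the positivity argument already used for Theorem~\ref{T:u2P}. If $a_{12}$ (equivalently $P$) had period $p$, then $P(p)=P(0)=0$ would force $\Lambda\mid p$, while $p\mid\Lambda$; hence $p=\Lambda$, and the period of $A_1(n)$ is $\Lambda$ as well.

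I expect the only genuine subtlety to be this last point: showing the period is exactly $\Lambda$ rather than merely a divisor needs the nonnegativity-and-vanishing analysis of $P(n)$, not a formal degree count. The identification of the two strata and the collection of coefficients are routine once the Möbius values $\mu(\hat 0,\hat 0)=1$ and $\mu(\hat 0,\cH)=-1$ and the reflective symmetry reducing each move to $(\hatc,\hatd)$ are in hand.
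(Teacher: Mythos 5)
Your proposal is correct and follows essentially the same route as the paper's (much terser) proof: the Möbius expansion gives $\mu(\hat0,\cH)=-1$ and the $\binom{q}{2}$ pairs, Proposition~\ref{p:attackdc} supplies the coefficients of $A_1$, and the exact period comes from the nonnegativity-and-vanishing analysis of the periodic part, which is precisely the argument in the proof of that proposition. Your write-up merely makes explicit the details the paper leaves to the reader.
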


The appearance of $\nhatd$ in $a_{12}$ and $a_{13}$ means that they depend on $n$ through its residue class modulo $\hatd$, unlike $a_{10}$, which is a constant.  

\begin{proof}
The sign in $-A_1(n)$ comes from the fact that the M\"obius function $\mu(\hat0,\cH)=-1$ for a hyperplane $\cH$.  The binomial coefficient counts the number of pairs $\{i,j\}$.  
The evaluation of $A_1$ comes from Proposition~\ref{p:attackdc} and the periods come from its proof.
\end{proof}

\begin{thm}[Square-Board Coefficient Theorem]\label{T:gammapolysquare}
On the square board the coefficients $\gamma_i$ for $i\leq2$ are independent of $n$.  The coefficient $q!\gamma_i$ of $n^{2q-i}$ in $o_\pP(q;n)$ is a polynomial in $q$, of degree $2i$, which depends periodically on $n$.  The leading coefficients are 
\begin{align}
q!\gamma_0 &= 1, \notag\\
q!\gamma_1 &= -(q)_2 \frac{a_{10}}{2} , \notag\\
\intertext{and for $i\geq1$ the coefficients are}
q!\gamma_i &
=: (q)_{2i} \bar\theta_{i,2i} + (q)_{2i-1} \bar\theta_{i,2i-1} + \cdots + (q)_2 \bar\theta_{i,2} \notag
\\&
= \sum_{\kappa=2}^{2i} (q)_\kappa  \sum_{\nu=\lceil\kappa/2\rceil}^{\min(i,2\kappa-2)} \sum_{[\cU_\kappa^\nu]}  \mu(\hat0,\cU_\kappa^\nu) \frac{1}{|\Aut(\cU_\kappa^\nu)|} \, \bar\gamma_{i-\nu}(\cU_\kappa^\nu),
\label{E:gammasumsquare}
\end{align}
in which $\bar\theta_{i,2i}=(-{a_{10}}/{2})^i/i!$, the inner sum ranges over intersection subspace types ${[\cU_\kappa^\nu]}$ such that $\cU_\kappa^\nu \in \cL(\cA_\pP^q)$, and
\[
\bar\theta_{i,2} = \begin{cases}
-a_{10}/2	&\text{ for } i=1, \\
(|\M|-1)/2	&\text{ for } i=2, \\
-a_{12}/2	&\text{ for } i=3, \\
-a_{13}/2	&\text{ for } i=4, \\
0	&\text{ for } i>4.
\end{cases}
\]
The period is a divisor of the least common multiple of the periods of all counting quasipolynomials $\alpha(\cU;n)$ for $\cU \in \cL(\cA_\pP^q)$ such that $\codim\cU \leq i$.  
\end{thm}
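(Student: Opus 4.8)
The plan is to begin from Part~I's general coefficient formula (Theorem~\Tgammapoly), which, after grouping the inside-out Ehrhart expansion of $o_\pP(q;n)=E_{\cube,\cA_\pP}^\circ(n+1)$ by isomorphism type, reads
\[
o_\pP(q;n)=\sum_{[\cU]}\frac{(q)_\kappa}{|\Aut\cU|}\,\mu(\hat0,\cU)\,\alpha(\cU;n)\,n^{2q-2\kappa},
\]
the factor $(q)_\kappa/|\Aut\cU|$ being the number of labelled subspaces of type $[\cU]$ (where $\cU$ involves $\kappa$ pieces). Writing $\nu=\codim\cU$ and expanding $\alpha(\cU;n)=\sum_j\bar\gamma_j(\cU)\,n^{2\kappa-\nu-j}$ as in \eqref{E:subcoeffs}, the term for $\cU$ contributes to the power $n^{2q-\nu-j}$. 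First I would extract the coefficient of $n^{2q-i}$ by collecting exactly the contributions with $\nu+j=i$, i.e.\ $j=i-\nu$; this produces the triple sum \eqref{E:gammasumsquare} as soon as the ranges of $\kappa$ and $\nu$ are justified.

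Those ranges follow from three elementary observations. The requirement $j=i-\nu\ge0$ gives $\nu\le i$. Next, every $\cU\in\cL(\cA_\pP)$ contains the diagonal $\{(z,\dots,z):z\in\bbR^2\}$, since all attack equations hold when the pieces coincide; hence $\dim\tcU\ge2$ and $\nu=2\kappa-\dim\tcU\le2\kappa-2$. Finally, each defining equation involves exactly two pieces, so $\nu$ equations can involve at most $2\nu$ pieces; for $\cU$ to genuinely involve $\kappa$ pieces we need $\kappa\le2\nu$, i.e.\ $\nu\ge\lceil\kappa/2\rceil$. Combining, $\lceil\kappa/2\rceil\le\nu\le\min(i,2\kappa-2)$, and since $\lceil\kappa/2\rceil\le i$ we get $2\le\kappa\le2i$. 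Because each $(q)_\kappa$ is a polynomial in $q$ of degree $\kappa$ and these are linearly independent, $q!\gamma_i$ is a polynomial in $q$ of degree at most $2i$, with degree exactly $2i$ attained precisely when the coefficient $\bar\theta_{i,2i}$ of $(q)_{2i}$ is nonzero.

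To see that $\gamma_0,\gamma_1,\gamma_2$ are constant in $n$, note that when $i\le2$ the only coefficients $\bar\gamma_{i-\nu}(\cU)$ that occur are leading coefficients ($i-\nu=0$) and second-leading coefficients ($i-\nu=1$). Leading coefficients are relative volumes, hence independent of $n$; second-leading coefficients vanish identically by Theorem~\ref{T:nearleading zero}. Thus $\gamma_0=1/q!$; for $\gamma_1$ only $(\nu,j)=(1,0)$ survives, giving $-(q)_2a_{10}/2$ by Lemma~\ref{L:total01}; and for $\gamma_2$ the potentially $n$-dependent term $(\nu,j)=(1,1)$ is a second-leading coefficient that vanishes, leaving only the constant leading-coefficient term $(\nu,j)=(2,0)$. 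The same accounting shows the obstruction first appears at $i=3$, where $(\nu,j)=(1,2)$ invokes $\bar\gamma_2$ of a hyperplane, which by Proposition~\ref{p:attackdc} does depend on $\barn$.

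It remains to identify the two extreme coefficients and the period. The top coefficient $\bar\theta_{i,2i}$ comes only from $\kappa=2i$, which forces $\nu=\lceil\kappa/2\rceil=i$ and $j=0$: the relevant $\cU$ are perfect matchings of the $2i$ pieces into $i$ independent attacking pairs. Here $\mu(\hat0,\cU)=(-1)^i$, the leading coefficient $\bar\gamma_0(\cU)$ is the product of the $i$ hyperplane leading coefficients, summing over all move-labellings yields $a_{10}^i$, and the matching-and-automorphism count supplies the factor $1/(2^i i!)$; hence $\bar\theta_{i,2i}=(-a_{10}/2)^i/i!$, which is nonzero because $a_{10}=\sum_{(c,d)\in\M}(3\hatd-\hatc)/(3\hatd^{2})>0$, securing the degree claim. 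The bottom coefficient $\bar\theta_{i,2}$ comes from $\kappa=2$, where $\cU$ is either a hyperplane $\cH^{d/c}_{12}$ ($\nu=1$) or the coincidence subspace $\{z_1=z_2\}$ ($\nu=2$, with $\mu(\hat0,\{z_1=z_2\})=|\M|-1$ and $|\Aut|=2$); substituting the explicit coefficients of $\alpha^{d/c}$ from Proposition~\ref{p:attackdc} (a cubic) and $\alpha(\{z_1=z_2\};n)=n^2$ gives the tabulated values and their vanishing for $i>4$. Finally, the period bound is immediate from \eqref{E:gammasumsquare}: $q!\gamma_i$ is a combination, with $n$-independent coefficients $(q)_\kappa$, of the $\bar\gamma_{i-\nu}(\cU)$ with $\codim\cU=\nu\le i$, so its period divides the least common multiple of the periods of those $\alpha(\cU;n)$. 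The main obstacle I anticipate is the purely combinatorial bookkeeping: verifying that the type-sum with its $1/|\Aut\cU|$ weights reproduces the labelled matching count $1/(2^i i!)$ in $\bar\theta_{i,2i}$, and tracking the Möbius values (especially $\mu(\hat0,\{z_1=z_2\})=|\M|-1$) carefully when $\kappa=2$.
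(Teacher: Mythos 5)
Your proposal is correct and follows essentially the same route as the paper: expand $o_\pP(q;n)$ by subspace type via the Part~I formula, extract the coefficient of $n^{2q-i}$ with the range constraints $\lceil\kappa/2\rceil\le\nu\le\min(i,2\kappa-2)$, compute $\bar\theta_{i,2i}$ from perfect matchings of $2i$ pieces into $i$ attacking pairs, and compute $\bar\theta_{i,2}$ from hyperplanes and the coincidence subspace $\cW^{\,=}_{12}$. The only cosmetic difference is that you invoke the general Theorem~\ref{T:nearleading zero} for the constancy of $\gamma_2$, where the paper uses the specific vanishing of the $n^2$ term in $A_1(n)$; both rest on the same fact.
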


\begin{proof}
The polynomiality of $q!\gamma_i$ is part of Theorem~\Tgammapoly, as is the constancy (with respect to $n$) of $\gamma_0$ and $\gamma_1$.  
The value of $\gamma_2$ is determined by subspaces of codimension 2 or less.  The contribution from codimension 2 is independent of $n$ since it is the sum of leading coefficients.  By Equation~\eqref{E:A1}, hyperplanes contribute zero.  The contribution from $\bbR^{2q}$ is zero.  Thus, $\gamma_2$ is independent of $n$.

It remains to investigate the individual coefficients $q!\gamma_i$ more closely.  Because the auxiliary variable $N$\label{d:Naux} now is simply $n^2$, we recalculate the formula for $o_\pP(q;n)$ by rewriting Equation~\Eiopmu, taking account of the simple form $\alpha(\bbR^{2q};n)=n^{2q}$, substituting via Equation~\eqref{E:subcoeffs}, and simplifying as in the proof of Theorem~\Tgammapoly, to get 
\begin{equation}
\begin{aligned}
o_\pP(q;n) 
&= n^{2q} + \sum_{\kappa=2}^{2i} (q)_\kappa  \sum_{\nu=\lceil\kappa/2\rceil}^{\min(i,2\kappa-2)} \sum_{[\cU_\kappa^\nu]}  \mu(\hat0,\cU_\kappa^\nu) \frac{1}{|\Aut(\cU_\kappa^\nu)|} \sum_{j=0}^{2\kappa-\nu} \bar\gamma_j(\cU_\kappa^\nu) n^{2q-\nu-j} ,
\end{aligned}
\label{E:typesum2}
\end{equation}
summed over intersection subspace types $[\cU_\kappa^\nu]$ such that $\cU_\kappa^\nu \in \cL(\cA_\pP^q)$.  So, for $i>0$ we have
\[
q!\gamma_i = \sum_{\kappa=2}^{2i} (q)_\kappa  \sum_{\nu=\lceil\kappa/2\rceil}^{\min(i,2\kappa-2)} \sum_{[\cU_\kappa^\nu]}  \mu(\hat0,\cU_\kappa^\nu) \frac{1}{|\Aut(\cU_\kappa^\nu)|} \bar\gamma_{i-\nu}(\cU_\kappa^\nu) .
\]

The next step is to determine the leading term and $q!\gamma_1$.  To do so we study one isomorphism type of subspace:

\medskip
{\bf Type $\cU_{2i}^i$:}
The subspaces $\cU_{2i}^i$ are those isomorphic to $\cU = \cH_{12}^{l_1} \cap \cdots \cap \cH_{2i-1,2i}^{l_i}$, where $l_1,\ldots,l_i\in \M$.  The M\"obius function is $\prod_j \mu(\hat0,\cH_{2j-1,2j}^{l_j}) = (-1)^i$.  
The automorphisms depend on the selection of slopes.  Write $\M = \{m_1,m_2,\ldots,m_s\}$ where $s=|\M|$.  Suppose $k_r$ hyperplanes have slope $m_r$.  Then an automorphism of $\cU$ can reverse the subscripts in any pair and it can permute the hyperplanes with the same slope.  Thus, $|\Aut(\cU)| = 2^i k_1! \cdots k_s!$.  The value of $\alpha(\cU;n)$ is $\prod_{r=1}^s \alpha^{m_r}(n)^{k_r}$, so the total contribution of all subspaces of type $\cU_{2i}^i$ is
\begin{align*}
\sum_{(k_1,\ldots,k_s)} (-1)^i \frac{1}{2^i k_1! \cdots k_s!} \prod_{r=1}^s \alpha^{m_r}(n)^{k_r} 
&= (-1)^i \frac{1}{2^i i!}  \sum_{(k_1,\ldots,k_s)} \frac{i!}{k_1! \cdots k_s!} \prod_{r=1}^s \alpha^{m_r}(n)^{k_r} \\
&= (-1)^i \frac{1}{2^i i!}  A_1(n)^i,
\end{align*}
the sum being taken over all $s$-tuples $(k_1,\ldots,k_s)$ of nonnegative integers whose total is $s$.  Since the leading coefficient of $A_1(n)$ is $a_{10}$, the coefficient of $(q)_{2i}$ in Equation~\eqref{E:gammasumsquare} is $-a_{10}/2$.  Since we assumed $i>0$, that implies the complete formula for $q!\gamma_1$.
\medskip

The term of $(q)_2$ appears only when $2 \geq i/2$, i.e., $i\leq4$.  The subspaces can be $\cU_2^1$, i.e., hyperplanes, and $\cU_2^2$, i.e., $\cW^{\,=}_{12}$ and its isomorphs.  ($\cW^{\,=}_{12}$\label{d:W=} is the subspace of configurations in which $z_1=z_2$.)

\medskip
{\bf Type $\cU_2^1$:}
Every hyperplane has $\mu(\hat0,\cU)=-1$ and $|\!\Aut\cU|=2$.  Thus, the contribution of all hyperplanes is $-A_1(n)/2$.  Denoting (as before) the coefficient of $n^{3-j}$ by $a_{1j}$, the hyperplanes contribute $-a_{1,i-1}/2$ to $\bar\theta_{i2}$.  That is zero when $i=2$.

\medskip
{\bf Type $\cU_2^2$:}
We have $\mu(\hat0,\cW^{\,=}_{12})=|\M|-1$ and $|\!\Aut\cW^{\,=}_{12}|=2$.  Since $\alpha(\cW^{\,=}_{12};n)=n^2$, the contribution of $[\cW^{\,=}_{12}]$ to $\bar\theta_{i2}$ is $(|\M|-1)\bar\gamma_{2-2}(\cW^{\,=}_{12})/2 = (|\M|-1)/2$ when $i=2$, and otherwise zero.
\end{proof}

If we could obtain the coefficient $\bar\theta_{i,2i-1}$ of $(q)_{2i-1}$ we would have, in particular, the missing coefficient $\bar\theta_{23}$ of a general formula for $q!\gamma_2$.  There is but one difficult step in that.  Since $\nu=i$, 
$$
\bar\theta_{i,2i-1} = \sum_{[\cU]:\cU=\cU_{2i-1}^i}  \mu(\hat0,\cU) \frac{1}{|\Aut(\cU)|} \, \bar\gamma_0(\cU).
$$
The subspaces $\cU_{2i-1}^i$ have the form $\big(\cH_{12}^{l_1}\cap\cH_{23}^{l_2}\big) \cap \cH_{45}^{l_3} \cap \cdots \cap \cH_{2i-2,2i-1}^{l_i}$, where $l_1,\ldots,l_i\in \M$.  There are two types: $l_1=l_2$ and $l_1 \neq l_2$.  The automorphism group has order $H 2^{i-2}$ where $H:=|\Aut\big(\cH_{12}^{l_1}\cap\cH_{23}^{l_2}\big)|.$\label{d:H} The contribution of all subspaces of either type is 
$$
\frac{\mu(\hat0,\cH_{12}^{l_1}\cap\cH_{23}^{l_2})}{H 2^{i-2}}\alpha(\cH_{12}^{l_1}\cap\cH_{23}^{l_2};n)\cdot(-1)^{i-2}A_1(n)^{i-2},
$$
whose leading coefficient is 
$$
\frac{\mu(\hat0,\cH_{12}^{l_1}\cap\cH_{23}^{l_2})}{H}\big(-\frac{a_{10}}{2}\big)^{i-2}\bar\gamma_0(\cH_{12}^{l_1}\cap\cH_{23}^{l_2}).
$$
So we need the value of $\bar\gamma_0(\cH_{12}^{l_1}\cap\cH_{23}^{l_2})$ summed over all permitted slope pairs $(l_1,l_2)$.
\medskip

{\bf Type $\cU_{2i-1\mathrm{a}}^i$:}  
If $l_1=l_2$, then 
$\cH_{12}^{l_1}\cap\cH_{23}^{l_2} = \cW^{\,l_1}_{123},$ 
$\mu(\hat0,\cW^{\,l_1}_{123})=2$, $H=3!$, and $\sum_{l_1}\bar\gamma_0(\cW^{\,l_1}_{123}) = b_{10} := \sum_{(c,d)\in\M} (2\hatd-\hatc)/2\hatd^3$\label{d:b10} (from Proposition~\ref{p:attackdc}).
The total contribution of this type to the coefficient is therefore 
$$
(-1)^i \frac{1}{3\cdot2^{i-2}(i-2)!} \, a_{10}^{i-2} b_{10} . 
$$ 

{\bf Type $\cU_{2i-1\mathrm{b}}^i$:}
When $l_1 \ne l_2$ we have $\mu(\hat0,\cH_{12}^{l_1}\cap\cH_{23}^{l_2})=1$ and $H=1$, but $\alpha(\cH_{12}^{l_1} \cap \cH_{23}^{l_2};n)$ for  arbitrary slopes $l_1$ and $l_2$ is too complicated for us.  
Finding just its leading coefficient would give $\bar\theta_{i,2i-1}$.
\medskip

%%%%%%%%%%%%%%%%%%%%%%%%%%%%
\sectionpage\section{One-Move Riders }\label{1move}\

Theorem~\ref{T:gammapolysquare} is best possible for pieces in general.  
Even for a piece with only one attacking move, the coefficient $\gamma_3$ may vary with $n$ with a large period.  We prove that here (without appealing to the general theory).  This leads us to propose that periodic variability of higher quasipolynomial coefficients occurs, not due to the number of attacking moves, but because of their slopes.  

The intersection lattice $\cL(\cA_\pP)$ of a one-move rider $\pP$ is the partition lattice $\Pi_q$,\label{d:Piq} so the M\"obius function is known.

Consider a piece $\pP$ with move set $\M=\{(c,d)\}$, where $c$ and $d$ are relatively prime integers such that $0 \leq c \leq d$ and $d>0$.  Note that a move with $c=0$ must have $d=\pm1$, by nontriviality (the zero move is not allowed) and relative primality.

\begin{prop}\label{P:period cd}
For a piece $\pP$ with move set\/ $\M = \{(c,d)\}$ where $0 \leq c \leq d$, 
\begin{align*}
u_\pP(1;n) &= n^2, 
\\%
u_\pP(2;n) &= \frac12 \bigg\{ n^4 + \frac{c-3d}{3d^2}  n^3 - \frac{c}{3} n\bigg\} 
\\&\quad
+\frac{\barn(d-\barn)}{6d^2}\bigg\{  [3(c-d)]n + c(d-2\barn) \bigg\}, 
\\
u_\pP(3;n) &=
\frac{1}{6} \bigg\{n^6 + \frac{c-3d}{d^2} n^5 - \frac{c-2d}{d^3} n^4 
-c n^3 + \frac{c}{d} n^2  \bigg\}
\\&\quad
+ \frac{\barn(d-\barn)}{6d^3} \bigg\{ 
 \big[ 3d(c-d)  \big] n^3
+ \big[ 6d+cd^2-2cd\barn-6c \big] n^2
\\&\qquad\qquad\qquad\ 
+  \big[ 2(d-2 c)(d - 2\barn)\big] n
+  3c(d-\barn)\barn  
\bigg\} ,
\\
u_\pP(4;n) &=
\frac{1}{24} \bigg\{n^8 + \frac{2(c-3d)}{d^2} n^7 + \frac{c^2-18cd+33d^2}{3d^4} n^6 + \frac{18c-30d-10cd^4}{5d^4} n^5 
\\&\qquad\quad
+ \frac{18cd-2c^2}{3d^2} n^4 -\frac{4c}{d^2} n^3 + \frac{c^2}{3} n^2+\frac{2c}{5}n  \bigg\}
\\&\quad
+ \frac{\barn(d-\barn)}{360d^4} \bigg\{ 
 \big[90 d^2 (c - d) \big] n^5
+ \big[ 30 (c^2 + 15 d^2 -16c d + c d^3 - 2 c d^2 \barn) \big] n^4
\\&\qquad\ 
+ 10 \big[6 d (-9 + 2 d (d - 2 \barn)) + c^2 (d - 2 \barn) + 
   27 c (2 - d^2 + 2 d \barn) \big] n^3
\\&\qquad\ 
+ 15 \big[ 2 d (-12 d + c (18 - c d + d^2)) + 
 3 (-24 c + (16 + c^2) d + 2 c d^2 + d^3) \barn 
\\&\qquad\qquad\quad 
 - 3 (c + d)^2 \barn^2 \big] n^2
\\&\qquad\ 
+  10 \big[ 9 cd^2 - 9d^3-c^2 d^3 
   - (27 d^2 - 81cd + 5 c^2 d^2 - 3 cd^3) \barn 
\\&\qquad\qquad\quad 
   + 9 (3 d + c (cd - d^2-9)) \barn^2 + 6 c ( d-c) \barn^3 \big] n
\\&\qquad\ 
+  c (d - 2 \barn) \big[ d^2 (-6 + 5 c \barn) - 3 d \barn (36 + 5 c \barn) + 
   2 \barn^2 (54 + 5 c \barn) \big]  
\bigg\} .
\end{align*}
For $q=2,3,4$ the period of $u_\pP(q;n)$ with respect to $n$ is $d$.  
\end{prop}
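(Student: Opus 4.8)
The plan is to exploit the fact, recalled just above the statement, that the intersection lattice of a one-move rider is the partition lattice $\Pi_q$, and to reduce everything to the power sums
\[
S_\lambda(n) := \sum_{l\in\bL^{d/c}(n)} l^\lambda
\]
of the line-size multiset computed in Lemma~\ref{p:bL(n)}. An element of $\cL(\cA_\pP)$ is a partition $\pi$ of the set $[q]=\{1,\dots,q\}$, the associated subspace $\cU_\pi$ forcing the pieces of each block to lie on a common line of slope $d/c$. Since $\cU_\pi$ factors over the blocks of $\pi$, the inside-out expansion (Equation~\Eiopmu) factors too: the reduced Ehrhart factor $\alpha(\cU_\pi;n)$ together with the free-piece factor $n^{2q-2\kappa}$ combine into $\prod_{B\in\pi}S_{|B|}(n)$, where a block of size $\lambda$ counts ordered $\lambda$-tuples of collinear positions and $S_1(n)=n^2$ absorbs the singletons. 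With the partition-lattice M\"obius value $\mu(\hat0,\pi)=\prod_{B\in\pi}(-1)^{|B|-1}(|B|-1)!$ this gives
\[
o_\pP(q;n)=\sum_{\pi\in\Pi_q}\mu(\hat0,\pi)\prod_{B\in\pi}S_{|B|}(n),
\qquad u_\pP(q;n)=\tfrac1{q!}\,o_\pP(q;n).
\]
Here $S_2(n)=\alpha^{d/c}(n)$ and $S_3(n)=\beta^{d/c}(n)$ are already supplied by Proposition~\ref{p:attackdc}.

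For $q\le 4$ the only genuinely new ingredient is $S_4(n)$, which I would compute exactly as Proposition~\ref{p:attackdc} was proved: insert the multiplicities of Lemma~\ref{p:bL(n)} to write
\[
S_4(n)=2cd\sum_{l=1}^{\delta-1}l^4+\big[cd+c\barn+(d-\barn)(n-c\delta)\big]\delta^4+\barn(n-c\delta)(\delta+1)^4,
\]
evaluate $\sum_{l=1}^{\delta-1}l^4$ by Faulhaber's formula, substitute $\delta=(n-\barn)/d$, and collect the result into an invariant part and a part carrying the factor $\barn(d-\barn)$.

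Next I would assemble the formulas by grouping the partitions of $[q]$ by cycle type, so that the sum over $\Pi_q$ collapses to a short sum weighted by the number of partitions of each type times its M\"obius value. For $q=4$ the types $1^4,\,2\,1^2,\,2^2,\,3\,1,\,4$ occur $1,6,3,4,1$ times with M\"obius values $1,-1,1,2,-6$, giving
\[
o_\pP(4;n)=n^8-6\,\alpha^{d/c}(n)\,n^4+3\,\alpha^{d/c}(n)^2+8\,\beta^{d/c}(n)\,n^2-6\,S_4(n);
\]
substituting the explicit $\alpha^{d/c},\beta^{d/c},S_4$ and dividing by $4!$ yields the displayed expression, and similarly (more easily) for $q=1,2,3$, with $q=2,3$ matching Theorem~\ref{T:u2P} and its $q=3$ analogue. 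For the period statement, note first that every $S_\lambda(n)$ has period dividing $d$ (Proposition~\ref{p:attackdc} for $\lambda\le3$, and the same computation for $S_4$), so each $u_\pP(q;n)$ has period dividing $d$. For the exact value I would isolate the coefficient of $n^{2q-3}$ (the first coefficient allowed to vary with $n$, by Theorem~\ref{T:gammapolysquare}): in all three cases it equals an $n$-independent constant plus $\tfrac{c-d}{K d^2}\,\barn(d-\barn)$ with $K>0$ ($K=2$ for $q=2,3$ and $K=4$ for $q=4$). When $c<d$ the periodic summand is strictly negative except at $\barn=0$, so the coefficient returns to its constant value exactly when $d\mid n$; this rules out every proper divisor of $d$ as a period and forces the period to be exactly $d$. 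When $c=d$, relative primality gives $c=d=1$ and the period is trivially $1=d$.

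The hard part will be purely computational: the determination of $S_4$ and the subsequent simplification of the degree-$8$ quasipolynomial $u_\pP(4;n)$ into the displayed invariant and periodic pieces is where essentially all the labor lies. The structural reduction to $\Pi_q$ and power sums, and the uniform period argument through the single coefficient of $n^{2q-3}$, are short by comparison.
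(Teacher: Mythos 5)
Your argument is correct, and it reaches the formulas by a genuinely different route than the paper. The paper proves these identities ``without appealing to the general theory'': it writes $u_\pP(3;n)$ and $u_\pP(4;n)$ as direct unordered counts, $\binom{n^2}{q}$ minus a hand-crafted inclusion--exclusion over the collinearity patterns (e.g.\ for $q=4$ it must separately handle four-on-a-line, three-plus-one, two disjoint attacking pairs, and one attacking pair with two free pieces, the last requiring a delicate correction term), and then feeds the line-size multiplicities of Lemma~\ref{p:bL(n)} into Mathematica. You instead invoke the observation, stated but not exploited in the paper's proof, that $\cL(\cA_\pP)\cong\Pi_q$ for a one-move rider, so that the labelled count collapses to $o_\pP(q;n)=\sum_{\pi\in\Pi_q}\mu(\hat0,\pi)\prod_{B\in\pi}S_{|B|}(n)$ with $S_\lambda(n)=\sum_{l\in\bL(n)}l^\lambda$; grouping by block type gives your five-term expression for $o_\pP(4;n)$, which I checked agrees with the paper's count (both reduce to the same power sums, and e.g.\ for $(c,d)=(0,1)$ both yield $n^4\binom{n}{4}$). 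This buys a mechanical derivation with no case analysis, at the cost of computing $S_4(n)$, which is exactly the generalization the paper poses as Problem~\ref{improve}(a); your plan to obtain it by the method of Proposition~\ref{p:attackdc} is sound. Your period argument also differs: the paper argues from the constant terms being ``zero only when $\barn=0$,'' whereas you extract the coefficient of $n^{2q-3}$, whose periodic part is a negative multiple of $\barn(d-\barn)$ when $c<d$ and hence vanishes exactly when $d\mid n$. Your version is, if anything, tighter (the constant term $c(d-2\barn)\barn(d-\barn)/6d^2$ also vanishes at $\barn=d/2$, so the paper's one-line justification needs a word more care than yours), and it is consistent with Proposition~\ref{P:2q-3 period1}. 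The only blemishes are cosmetic: the phrase about $q=3$ ``matching Theorem~\ref{T:u2P}'' should refer to the $u_\pP(3;n)$ cross-check against Part~III, and like the paper you ultimately defer the degree-$8$ simplification to a computer algebra system.
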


\begin{proof}
The formula for $u_\pP(1;n)$ is trivial.  Theorem~\ref{T:u2P} implies the value of $u_\pP(2;n)$; a combinatorial count similar to that for $u_\pP(3;n)$ and $u_\pP(4;n)$ gives the same result.

Direct combinatorial arguments for $q=3$ and $4$ give 
\begin{align*}
u_\pP(3;n) &=\binom{n^2}{3} - \sum_{l\in \bL(n)} \binom{l}{3}-\sum_{l\in\bL(n)} \binom{l}{2}\big[n^2-l\big] \qquad\textup{and} \\
u_\pP(4;n) &= \binom{n^2}{4} - \sum_{l\in \bL(n)} \binom{l}{4}-\sum_{l\in\bL(n)} \binom{l}{3}\big[n^2-l\big]-\sum_{\{l,l'\}\subseteq\bL(n)} \binom{l}{2}\binom{l'}{2} \\
&\quad\ -\sum_{l\in \bL(n)} \binom{l}{2}\bigg[\binom{n^2-l}{2}-\sum_{l'\in \bL(n)}\binom{l'}{2} + \binom{l}{2} \bigg],
\end{align*}
where $\bL(n) := \bL^{d/c}(n)$.  
For instance, $u_\pP(4;n)$ is the number of placements of four non-attacking pieces, which we count by placing four pieces on any of the $n^2$ positions on the board and removing those where at least two pieces attack.  We must remove the cases where four pieces are in the same line $l^{d/c}(b)$, those where three pieces are in the same line and the fourth is in another line, those where two pieces are in the same line $l^{d/c}(b)$ and the remaining two are both in another line $l^{d/c}(b')$, and last, those where two pieces are attacking and the remaining two pieces attack none of the others.

The reasoning for $u_\pP(3;n)$ is simpler so we merely show the steps in the simplification:
\begin{align*}
u_\pP(3;n) &= 
\binom{n^2}{3} - \sum_{l\in \bL(n)} \binom{l}{3}-\sum_{l\in\bL(n)} \binom{l}{2}\big[n^2-l\big] 
\\&= 
\binom{n^2}{3} +2\sum_{l\in \bL(n)} \binom{l}{3}-(n^2-2)\sum_{l\in\bL(n)} \binom{l}{2} 
\\&= 
\binom{n^2}{3} 
+ 2 \bigg\{ 2cd \binom{\frac{n-\barn}{d}}{4} + \Big[ (d-\barn)\big(n-c\frac{n-\barn}{d}\big)+c(\barn+d) \Big] \binom{\frac{n-\barn}{d}}{3} 
\\&\qquad\qquad\qquad
+ \Big[ \barn\big(n-c\frac{n-\barn}{d}\big) \Big] \binom{\frac{n-\barn}{d}+1}{3} 
 \bigg\} 
\\&\qquad
- (n^2-2) \bigg\{    2cd \binom{\frac{n-\barn}{d}}{3} + \Big[ (d-\barn)\big(n-c\frac{n-\barn}{d}\big)+c(\barn+d) \Big] \binom{\frac{n-\barn}{d}}{2} 
\\&\qquad\qquad\qquad\quad
+ \Big[ \barn\big(n-c\frac{n-\barn}{d}\big) \Big] \binom{\frac{n-\barn}{d}+1}{2}    \bigg\} ,
\end{align*}
which when expanded (we used Mathematica) gives the constant and periodic parts stated in the proposition.  The simplification of $u_\pP(4;n)$ is similar.

The constant terms of the quasipolynomials for $q=2,3,4$ have period $d$ since they are zero only when $\barn=0$.  (Recall that $c>0$.)  The other coefficients have period dividing $d$ since they depend on $n$ through $\barn$.
\end{proof}

The equation for $u_\pP(3;n)$ agrees with the formulas for partial queens $\pQ^{10}$ and $\pQ^{01}$ in Part~III.
It would be instructive to find $u_\pP(q;n)$ in general, but this task seems difficult.

The example with $(c,d)=(1,2)$ simplifies nicely when $q = 2, 3, 4$.

\begin{cor}\label{prop:period12}%C6.2
For a piece $\pP$ with move set\/ $\M=\{(1,2)\}$, the following formulas hold.
\begin{align*}
u_\pP(2;n) &= \left\{\frac{n^4}{2}-\frac{5 n^3}{24}-\frac{11 n}{48}\right\}+(-1)^n\frac{n}{16},
\\[5pt]
u_\pP(3;n) &= \left\{\frac{n^6}{6}-\frac{5 n^5}{24}+\frac{n^4}{16}-\frac{11 n^3}{48}+\frac{7 n^2}{48}+\frac{1}{32}\right\}+(-1)^n\left\{\frac{n^3}{16}-\frac{n^2}{16}-\frac{1}{32} \right\} 
\\[5pt]
u_\pP(4;n) &= \left\{\frac{n^8}{24}-\frac{5 n^7}{48}+\frac{97 n^6}{1152}-\frac{131 n^5}{960}+\frac{223 n^4}{1152}-\frac{17 n^3}{192}+\frac{137 n^2}{2304}-\frac{73 n}{1920}\right\}
\\&\qquad+(-1)^n\left\{\frac{n^5}{32}-\frac{29 n^4}{384}+\frac{3 n^3}{64}-\frac{35 n^2}{768}+\frac{7 n}{128}\right\}.
\end{align*}
\end{cor}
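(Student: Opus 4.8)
The plan is to specialize the general formulas of Proposition~\ref{P:period cd} to the single move $(c,d)=(1,2)$ and then repackage the residue-dependence in terms of $(-1)^n$. First I would substitute $c=1$ and $d=2$ into each displayed expression for $u_\pP(q;n)$, $q=2,3,4$. The invariant part (the first brace in each formula) becomes an ordinary polynomial in $n$ with rational coefficients after this substitution; no further thought is required there, only arithmetic.

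The essential simplification concerns the periodic part. Since $d=2$, the residue $\barn=(n\bmod 2)$ takes only the values $0$ and $1$, so $\barn\in\{0,1\}$ and every power $\barn^k$ with $k\geq1$ equals $\barn$ itself. Reducing $\barn^2,\barn^3,\ldots$ to $\barn$ collapses each periodic brace to an expression that is affine in $\barn$, and the prefactor $\barn(d-\barn)=\barn(2-\barn)$ likewise reduces to $\barn$ on $\{0,1\}$. Consequently the entire periodic contribution becomes $\barn\,R_q(n)$ for an explicit polynomial $R_q(n)$, which vanishes when $n$ is even (where $\barn=0$) and equals $R_q(n)$ when $n$ is odd (where $\barn=1$).

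To obtain the stated $(-1)^n$ form I would use the identity $\barn=\tfrac12\bigl(1-(-1)^n\bigr)$, valid for $\barn\in\{0,1\}$. Writing $P_q(n)$ for the invariant polynomial just computed, the quasipolynomial equals $P_q(n)+\barn R_q(n)=P_q(n)+\tfrac12 R_q(n)-(-1)^n\tfrac12 R_q(n)$, which is exactly the shape $\{A_q(n)\}+(-1)^n\{B_q(n)\}$ with $A_q=P_q+\tfrac12 R_q$ and $B_q=-\tfrac12 R_q$; these $A_q$ and $B_q$ are the braces appearing in the corollary. Equivalently, one may evaluate Proposition~\ref{P:period cd} at $\barn=0$ and $\barn=1$ to read off the even- and odd-$n$ constituents directly, then solve the two linear equations $A_q+B_q=(\text{even value})$ and $A_q-B_q=(\text{odd value})$ for $A_q$ and $B_q$.

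There is no genuine obstacle here: the only labor is the algebraic expansion of the $q=3$ and $q=4$ braces, which is tedious but entirely mechanical (indeed the authors report using Mathematica for exactly this). The single point that warrants care is the sign-and-coefficient bookkeeping when folding half of $R_q(n)$ into the invariant part, and I would guard against slips by checking the even-$n$ value $A_q(n)+B_q(n)=P_q(n)$ against the $\barn=0$ specialization of Proposition~\ref{P:period cd}, as well as spot-checking both constituents against small numerical counts of nonattacking placements.
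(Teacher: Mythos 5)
Your proposal is correct and matches the paper's (implicit) argument: the corollary is stated as a direct specialization of Proposition~\ref{P:period cd} to $(c,d)=(1,2)$, and your substitution $\barn=\tfrac12(1-(-1)^n)$ with the reduction of powers of $\barn$ on $\{0,1\}$ is exactly the intended repackaging into the $(-1)^n$ form. A quick check of the $q=2$ case (invariant part $\tfrac{n^4}{2}-\tfrac{5n^3}{24}-\tfrac{n}{6}$ plus periodic part $-\barn\,\tfrac{n}{8}$, which folds to $-\tfrac{11n}{48}+(-1)^n\tfrac{n}{16}$) confirms the bookkeeping works as you describe.
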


In each formula of Proposition~\ref{P:period cd} the coefficient $\gamma_3$ and the entire formula both have period $d$.  This suggests generalizations.

\begin{prop}\label{P:2q-3 period1}
For a one-move rider with basic move $(c,d)$ where $0 \leq c \leq d$, the period of $\gamma_3$ in $u_\pP(q;n)$ when $q\geq2$ is exactly $d$.  The periodic part is 
$$
-\frac{1}{(q-2)!} \frac{\barn(d-\barn)(d-c)}{2d^2}.
$$
\end{prop}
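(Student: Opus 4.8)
\section*{Proof proposal}

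The plan is to exploit the especially clean product structure of the labelled generating function for a one-move rider and then to isolate the single partition-type that can contribute a periodic term to $\gamma_3$. First I would establish the closed form
$$
\sum_{q\ge0} o_\pP(q;n)\,\frac{x^q}{q!} \;=\; \prod_{l\in\bL(n)}(1+lx),
\qquad\text{equivalently}\qquad
u_\pP(q;n)=e_q\big(\bL(n)\big),
$$
the $q$th elementary symmetric function of the line sizes. This has a transparent combinatorial proof: since attacking means being collinear along the unique slope $d/c$, and distinct lines of that slope are parallel and hence disjoint, a nonattacking placement is exactly a choice of $q$ distinct lines together with one cell on each. Alternatively it drops out of the M\"obius formula over $\cL(\cA_\pP)=\Pi_q$, using the partition-lattice values $\mu(\hat0,\pi)=\prod_{B\in\pi}(-1)^{|B|-1}(|B|-1)!$, the fact that a block $B$ contributes $\sum_{l\in\bL(n)}l^{|B|}$, and the exponential formula. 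Writing $p_k:=\sum_{l\in\bL(n)}l^k$ for the power sums, taking logarithms gives $\sum_{k\ge1}\frac{(-1)^{k-1}}{k}p_k x^k$, so $e_q$ is the usual universal polynomial in the $p_k$.

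Next I would record the degree data of the power sums. Here $p_1=n^2$ exactly, while $p_k=\alpha(\cW^{\,d/c}_{12\cdots k};n)$ for $k\ge2$; by Theorem~\ref{T:nearleading zero} each such $p_k$ has degree $k+1$, a vanishing coefficient of $n^k$ (no second leading term), and an invariant leading coefficient (a relative volume), so its periodic part is supported in degrees $\le k-1$. Extracting the coefficient of $n^{2q-3}$ from
$$
e_q=\sum_{\lambda\vdash q}\prod_k \frac{1}{m_k!}\Big(\frac{(-1)^{k-1}}{k}\,p_k\Big)^{m_k},
$$
I note that the product attached to a partition $\lambda$ with $\ell(\lambda)=\sum_k m_k$ parts has degree $q+\ell(\lambda)$, so only partitions with $\ell(\lambda)\ge q-3$ can reach degree $2q-3$. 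For such a partition the available total degree deficiency is $\ell(\lambda)-q+3=3-\sum_k m_k(k-1)$, and a periodic contribution requires some factor $p_k$ with $k\ge2$ to descend to degree $\le k-1$, i.e.\ to spend deficiency at least $2$ in that single factor.

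The crux is the short case analysis over these partitions. The partition $1^q$ has no part $\ge2$, hence no periodic part. For $3\,1^{q-3}$ and $2^2 1^{q-4}$ the budget is only $1$, which forces some $p_k$ ($k\ge2$) to contribute its $n^k$-term; that term vanishes, so these partitions contribute nothing at all. The partitions with $\ell(\lambda)=q-3$ (namely $4\,1^{q-4}$, $3\,2\,1^{q-5}$, $2^3 1^{q-6}$) have budget $0$, hence only top-degree invariant terms. This leaves $\lambda=2\,1^{q-2}$, whose budget is exactly $2$, precisely enough for the unique factor $p_2$ to reach its periodic part at degree $k-1=1$. The coefficient of this partition in $e_q$ is $\frac{(n^2)^{q-2}}{(q-2)!}\cdot\big(-\tfrac12 p_2\big)$; extracting the $n^1$-term of $p_2=\alpha^{d/c}(n)$ from Proposition~\ref{p:attackdc}, whose periodic part is $\frac{\barn(d-\barn)}{d^2}(d-c)$, yields the periodic part of $\gamma_3$ as $-\frac{1}{(q-2)!}\frac{\barn(d-\barn)(d-c)}{2d^2}$, as claimed.

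Finally, to see the period is exactly $d$, I would note that for $d\ge2$ relative primality with $c\le d$ forces $c<d$, so $d-c\ne0$; and since $\barn\mapsto\barn(d-\barn)$ vanishes at $\barn=0$ but is positive at $\barn=d'$ for every proper divisor $d'\mid d$ with $0<d'<d$, the periodic part cannot have period smaller than $d$, while it manifestly divides $d$. The case $d=1$ gives the trivial constant (period $1$). The main obstacle is the middle bookkeeping step: one must verify that the vanishing of the second leading coefficient (Theorem~\ref{T:nearleading zero}) is exactly what annihilates the $\ell(\lambda)=q-2$ partitions, so that $\lambda=2\,1^{q-2}$ is genuinely the only source of periodicity in $\gamma_3$.
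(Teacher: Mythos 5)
Your proposal is correct, and it arrives at the same numerical answer by a route that is recognizably parallel to, but formally different from, the paper's. The paper works directly with the M\"obius expansion of $o_\pP(q;n)$ over the subspace types of codimension at most $3$ in $\cL(\cA_\pP)=\Pi_q$: hyperplanes supply the periodic term $-\binom{q}{2}\barn(d-\barn)(d-c)/d^2$ to $q!\gamma_3$, the two codimension-$2$ types (a pair of disjoint hyperplanes, and three mutually collinear pieces) are killed because $(\alpha^{d/c})^2$ and $\beta^{d/c}$ have no second leading term, and codimension $3$ contributes only leading (hence constant) coefficients. You instead prove $u_\pP(q;n)=e_q(\bL(n))$ and run the exponential formula, so your partitions $2\cdot1^{q-2}$, $\{3\cdot1^{q-3},\,2^2 1^{q-4}\}$, and the three partitions with $q-3$ parts correspond exactly to the paper's hyperplanes, codimension-$2$ types, and codimension-$3$ types, with the factors $\frac{(-1)^{k-1}}{k}\cdot\frac{1}{m_k!}$ packaging the M\"obius values and automorphism counts that the paper handles by hand. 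What your version buys is a self-contained, uniform bookkeeping device (the ``deficiency budget'') that makes it transparent why only $2\cdot1^{q-2}$ can produce periodicity, and it would scale more gracefully to $\gamma_4,\gamma_5,\dots$ for one-move riders (your item is essentially Problem 7.1(a)--(b) in embryo); what it gives up is that it is special to one-move pieces, whereas the paper's subspace-type analysis is the template reused for multi-move pieces elsewhere in the series. Two tiny points: the partition $1^q$ contributes nothing at all to the $n^{2q-3}$ coefficient (not merely nothing periodic), since $(p_1)^q=n^{2q}$ is a monomial; and in the exactness-of-period step it is cleaner to say that any period of the coefficient divides $d$ (it depends on $n$ only through $\barn$) and that for any proper divisor $p<d$ one has $\gamma_3|_{\barn=0}\neq\gamma_3|_{\barn=p}$ because $p(d-p)(d-c)>0$ when $d\ge2$ --- which is the paper's argument and what your divisor check amounts to.
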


\begin{proof}
The subspaces that can contribute to $q!\gamma_3$ are those of codimension at most 3.  There is no contribution from codimension 0 since the open Ehrhart quasipolynomial is $\alpha(\bbR^{2q};n)n^{2q}=n^{2q}$.  The contribution from codimension 3 is constant.  

The contribution from a hyperplane is the $n^{2q-3}$ term of $\mu(\hat0,\cH^{d/c}_{ij}) \alpha(\cH^{d/c}_{ij}) n^{2q-4} = -\alpha^{d/c}n^{2q-4}$.  By Equation~\eqref{E:alpha d/c}, the periodic contribution is $-\barn(d-\barn)(d-c)/d^2$.  There are $\binom{q}{2}$ hyperplanes.

There are two kinds of subspace of codimension 2.

\begin{description}

\item[{\bf Type $\cU^2_{4^*}$\,}]
This subspace is the intersection of two hyperplanes that involve disjoint pairs of pieces; i.e., $\cU^2_{4^*} = \cH^{d/c}_{ij} \cap \cH^{d/c}_{kl}$ where $\{i,j\} \cap \{k,l\} = \eset$.  The value of $\alpha(\cU^2_{4^*})$ is $(\alpha^{d/c})^2$, whose $n^5$ term, as one can see from Equation~\eqref{E:alpha d/c}, has coefficient 0.  The contribution of this subspace to the $n^{2q-3}$ term of $o_\pP(q;n)=q!u_\pP(q;n)$ is the corresponding term in $n^{2q-8}\alpha(\cU^2_{4^*})$, so there is no contribution.

\item[{\bf Type $\cW^{\,d/c}_{12}$\,}]
The contribution of each subspace is $\beta^{d/c}$.  Each subspace contributes $\mu(\hat0,\cW^{\,d/c}_{12}) \alpha(\cW^{\,d/c}_{12}) n^{2q-6}$ to $o_\pP(q;n)$, in which the coefficient of $n^{2q-3}$ is 0.

\end{description}

Thus, there is no contribution except from hyperplanes, whose total periodic contribution to $q!\gamma_3$ is easily seen.  Dividing by $q!$ gives the periodic part of $\gamma_3$.

To prove the period is $d$, note that the periodic part vanishes if and only if $\barn=0$ or $c=d$.  In the latter case $c=d=1$ so the period is $d=1$.
\end{proof}

\begin{conj}\label{Cj:period1move}
For a one-move rider with basic move $(c,d)$, the period of $u_\pP(q;n)$ is exactly $\max(|c|,|d|)$.
\end{conj}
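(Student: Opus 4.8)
\medskip\noindent\textbf{Proof proposal.}
The plan is to prove matching upper and lower bounds on the period. By reflecting the board across a diagonal and the axes we may assume $0\le c\le d$, so that $\max(|c|,|d|)=d$; the case $q=1$ is degenerate ($u_\pP(1;n)=n^2$ has period $1$), so I would take $q\ge2$ throughout and show the period is exactly $d$. The two halves of the argument are that the period divides $d$ (upper bound) and that it is a multiple of $d$ (lower bound).

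First I would establish the upper bound. Lemma~\ref{p:bL(n)} shows that the multiset $\bL^{d/c}(n)$ of line sizes is determined by $n$ together with $\barn=n\bmod d$; consequently every power sum $\sigma_k(n):=\sum_{l\in\bL^{d/c}(n)}l^{k}$ is, after substituting $\delta=(n-\barn)/d$, a quasipolynomial in $n$ whose period divides $d$ (the cases $k=2,3$ being $\alpha^{d/c}$ and $\beta^{d/c}$ of Proposition~\ref{p:attackdc}). For a one-move rider $\cL(\cA_\pP)=\Pi_q$, and a subspace $\cU$ corresponding to a partition satisfies $\alpha(\cU;n)=\prod_{B}\sigma_{|B|}(n)$, the product being over the nonsingleton blocks $B$; hence each $\alpha(\cU;n)$ has period dividing $d$. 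Since $u_\pP(q;n)=o_\pP(q;n)/q!$ is a $\bbQ$-linear combination of the terms $\mu(\hat0,\cU)\,\alpha(\cU;n)\,n^{2q-2\kappa}$ via Equation~\Eiopmu, its period divides $d$.

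For the lower bound I would invoke the principle that the minimal period of a quasipolynomial equals the least common multiple of the minimal periods of its coefficient functions: writing $u_\pP(q;n)=\sum_i\gamma_i(n)\,n^{2q-i}$, the $n^{2q-i}$-coefficient of each polynomial constituent is exactly $\gamma_i$, so any period of $u_\pP$ must be a common period of all the $\gamma_i$. By Proposition~\ref{P:2q-3 period1} the coefficient $\gamma_3$ has period exactly $d$ when $q\ge2$ (its periodic part vanishes only for $\barn=0$ or $c=d$, and $c=d$ forces $d=1$), and since $2q-3\ge1$ this $\gamma_3$ is a genuine coefficient of $u_\pP(q;n)$. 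Therefore the period is a multiple of $d$, and together with the upper bound it equals $d$.

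The main obstacle is the lower bound in full uniformity. The scheme above reduces the conjecture (for $q\ge2$) to the combination of Lemma~\ref{p:bL(n)} and Proposition~\ref{P:2q-3 period1}, so the essential difficulty is precisely the content of the latter: exhibiting one coefficient of genuine period $d$ for \emph{every} $q\ge2$. A self-contained proof that did not single out $\gamma_3$ would instead have to rule out cancellation of the $\barn$-dependence among the many partition-type contributions to $o_\pP(q;n)$, and that is where the real work would lie; the clean way around it is to track a single low-order coefficient, as in Proposition~\ref{P:2q-3 period1}. Finally, the $q=1$ case shows the hypothesis $q\ge2$ cannot be dropped.
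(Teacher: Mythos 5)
The first thing to say is that the paper does not prove this statement at all: it is posed as a conjecture, and the only portion the paper justifies is the lower bound --- the remark immediately following the conjecture observes that the period must be a multiple of $\max(|c|,|d|)$ because $\gamma_3$ has exactly that period (Proposition~\ref{P:2q-3 period1}). Your lower-bound step is precisely that remark, made rigorous via the standard fact that any period of a quasipolynomial is a common period of all its coefficient functions, together with the check that $2q-3\ge1$ so $\gamma_3$ really occurs; that half is sound.

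The upper bound is the part the paper leaves open, and your argument for it appears to be correct. All three ingredients are already in the paper: that $\cL(\cA_\pP)\cong\Pi_q$ for a one-move rider (stated at the start of Section~\ref{1move}); that for the subspace $\cU$ of a partition, $\alpha(\cU;n)$ factors over the nonsingleton blocks as a product of power sums $\sigma_k(n)=\sum_{l\in\bL^{d/c}(n)}l^k$ (the cases $k=2,3$ are exactly $\alpha^{d/c}$ and $\beta^{d/c}$); and that each $\sigma_k$ has period dividing $d$, since Lemma~\ref{p:bL(n)} expresses $\bL^{d/c}(n)$ polynomially in $n$ on each residue class of $n$ modulo $d$. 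Feeding these into the M\"obius-sum formula~\Eiopmu\ gives that every term, hence $o_\pP(q;n)$ and $u_\pP(q;n)=o_\pP(q;n)/q!$, has period dividing $d$; combined with the lower bound this settles the conjecture for $q\ge2$, so if the argument survives scrutiny the ``conjecture'' is actually a theorem. Three points to tidy before claiming that: (i) as you note, $q=1$ with $d\ge2$ is a genuine counterexample to the literal statement, so the restriction to $q\ge2$ must be made explicit (the paper's statement omits it); (ii) the case $c=0$ falls outside the hypotheses of Lemma~\ref{p:bL(n)} and must be handled by the elementary fact $\bL^{0/1}(n)=\{n^n\}$, where $d=1$ and everything is a polynomial; (iii) you should say a word about why the closed form of Lemma~\ref{p:bL(n)}, derived as a count for positive $n$, determines the quasipolynomial $\alpha(\cU;n)$ --- it does, because a quasipolynomial is determined by its values at all sufficiently large arguments.
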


The period certainly is a multiple of $\max(|c|,|d|)$ because of the period of $\gamma_3$.

The number of combinatorial types is obviously $1$ (as stated in Theorem~\Ttwomovetypes).  
This implies a check on any formula for $u_\pP(q;n)$, since $u_\pP(q;-1)$ must equal 1.  Applying the check to $u_\pP(2;n)$, $u_\pP(3;n)$, and $u_\pP(4;n)$ in Proposition~\ref{P:period cd}, realizing that $\barn=d-1$, does give $u_\pP(q;-1)=1$ for $q=2,3,4$.

%%%%%%%%%%%%%%%%%%%%%%%%%%%%
\sectionpage\section{A Formula for the $n$-Queens Problem}\label{nqueens}\

Theorem~\ref{T:gammapolysquare} covers any number of pieces on any size board.  By setting $q=n$ we obtain what can be regarded as the first closed-form formula (according to~\cite{Bell}) for the $n$-Queens Problem, which is the case in which $\pP$ is the queen in the following result.  Let $\cA_\pP^\infty$ be the arrangement in the countably-infinite-dimensional vector space $\bbR^\infty$ of all move hyperplanes $\cH^{d/c}_{ij}$, $\{i,j\} \subset \bbZ_{>0}$.

\begin{thm}\label{T:nqueens}
The number of ways to place $n$ unlabelled copies of a rider piece $\pP$ on an $n\times n$ board so that none attacks another is
\begin{align*}
u_\pP(n;n) 
&= \frac{1}{n!}  \sum_{i=1}^{2n} n^{2n-i} \sum_{\kappa=2}^{2i} (n)_\kappa  \sum_{\nu=\lceil\kappa/2\rceil}^{\min(i,2\kappa-2)} 
\sum_{[\cU_\kappa^\nu]:\cU_\kappa^\nu\in\cL(\cA_\pP^\infty)}  
\mu(\hat0,\cU_\kappa^\nu) \frac{1}{|\Aut(\cU_\kappa^\nu)|} \, \bar\gamma_{i-\nu}(\cU_\kappa^\nu).
\end{align*}
\end{thm}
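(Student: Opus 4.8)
The plan is to obtain the $n$-Queens count as the diagonal specialization $q=n$ of the coefficient formula already proved for arbitrary $q$, so that this statement is essentially a transcription of Theorem~\ref{T:gammapolysquare}. I would start from the expansion $u_\pP(q;n)=\sum_{i=0}^{2q}\gamma_i(n)\,n^{2q-i}$ together with $o_\pP(q;n)=q!\,u_\pP(q;n)$, so that $o_\pP(q;n)=\sum_{i=0}^{2q}\big(q!\gamma_i(n)\big)n^{2q-i}$. For $i=0$ Theorem~\ref{T:gammapolysquare} gives $q!\gamma_0=1$, the contribution of the codimension-zero subspace $\bbR^{2q}$ (for which $\mu(\hat0,\bbR^{2q})=1$ and $\alpha(\bbR^{2q};n)=n^{2q}$); for each $i\ge1$ it gives Equation~\eqref{E:gammasumsquare}. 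Assembling these coefficient by coefficient reproduces the closed form
\[
\begin{aligned}
o_\pP(q;n) &= n^{2q} + \sum_{i=1}^{2q} n^{2q-i} \sum_{\kappa=2}^{2i} (q)_\kappa \\
&\quad \times \sum_{\nu=\lceil\kappa/2\rceil}^{\min(i,2\kappa-2)} \sum_{[\cU_\kappa^\nu]} \mu(\hat0,\cU_\kappa^\nu)\,\frac{\bar\gamma_{i-\nu}(\cU_\kappa^\nu)}{|\Aut(\cU_\kappa^\nu)|},
\end{aligned}
\]
an identity in the two independent variables $q$ and $n$.

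Next I would justify specializing this identity to the diagonal $q=n$. The key input, from Theorem~\Tgammapoly, is that each $q!\gamma_i$ is a genuine polynomial in $q$---displayed here through the falling factorials $(q)_\kappa$---whose coefficients depend on $n$ only through its residue class. Hence the right-hand side above is a polynomial identity in $q$ with periodic-in-$n$ coefficients, and setting $q=n$ is simply its evaluation along the diagonal; the two occurrences of $n$ are the same integer, so no ambiguity arises. Dividing by $n!$ then yields $u_\pP(n;n)$, and taking $\pP$ to be the queen recovers the classical $n$-Queens count.

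The only point requiring genuine care is the replacement of the index set $\cL(\cA_\pP^n)$ by the infinite arrangement $\cL(\cA_\pP^\infty)$. A type $[\cU_\kappa^\nu]$ involves exactly $\kappa$ pieces, and all of the data attached to it---the M\"obius value $\mu(\hat0,\cU_\kappa^\nu)$, the order $|\Aut(\cU_\kappa^\nu)|$, and the reduced Ehrhart coefficient $\bar\gamma_{i-\nu}(\cU_\kappa^\nu)$, the last defined through the essential part $\tcU$ on those $\kappa$ pieces---is independent of the ambient $q$ as long as $q\ge\kappa$. The entire $q$-dependence of a type's total contribution sits in the multiplicity $(q)_\kappa/|\Aut(\cU_\kappa^\nu)|$ counting its subspaces in $\cL(\cA_\pP^q)$. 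When $q=n$ and $\kappa\le n$ this reproduces exactly the contribution of the matching type in $\cL(\cA_\pP^n)$; when $\kappa>n$ the type cannot be realized on $n$ pieces, and consistently $(n)_\kappa=0$ annihilates its term. Thus enlarging the inner sum to $\cL(\cA_\pP^\infty)$ changes nothing, the falling factorial performing the truncation automatically.

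Finally I would record that the sum is finite for each fixed $n$: the index $i$ runs only to $2n$, the factor $(n)_\kappa$ kills every $\kappa>n$, and for each admissible $(\kappa,\nu)$ there are only finitely many types $[\cU_\kappa^\nu]$ on $\kappa$ pieces; the leading term $n^{2n}/n!$, being the $i=0$ (codimension-zero) contribution, is adjoined to the displayed sum over $i\ge1$. I do not anticipate a real obstacle, since the statement is a direct specialization of Theorem~\ref{T:gammapolysquare}; the sole substantive verification is the harmless passage from $\cL(\cA_\pP^n)$ to $\cL(\cA_\pP^\infty)$ through the vanishing of $(n)_\kappa$ for $\kappa>n$.
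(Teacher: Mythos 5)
Your proposal is correct and follows exactly the route the paper intends: the paper states this theorem as an immediate corollary of Theorem~\ref{T:gammapolysquare} obtained by setting $q=n$, and offers no further proof, so your write-up simply supplies the routine verification (polynomiality of $q!\gamma_i$ in $q$, evaluation along the diagonal, and the observation that $(n)_\kappa=0$ for $\kappa>n$ makes the passage to $\cL(\cA_\pP^\infty)$ harmless). Your remark that the leading term $n^{2n}/n!$ must be adjoined to the displayed sum over $i\ge1$ is also a correct reading of what the formula requires.
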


This formula is very complicated and potentially infinite (potentially rather than actually, because for each value of $n$ the number of nonzero terms is finite) but it is explicitly computable.  We have not tried to compare its complexity with that of other methods of counting nonattacking placements.

%%%%%%%%%%%%%%%%%%%%%%%%%%%%
\sectionpage\section{Questions, Extensions}\label{last}

Work on nonattacking chess placements raises many questions, several of which have general interest.  Besides Conjecture~\ref{Cj:period1move} and others to appear in later parts, we propose the following directions for research.

%=====
\subsection{Detailed improvements}\label{improve}\

These problems concern significant loose ends we left in basic counting questions.

\begin{enumerate}[(a)]
\item Generalize Proposition~\ref{p:attackdc} by finding a formula for the number of ways to place $q$ mutually attacking pieces on the same slope line.  The starting point is that the number of such placements in a line of length $l$ is $l^q$, which would be summed.  A consequence by inclusion--exclusion will be a complete solution for one-move pieces, which in turn may suggest general results about periods.
\label{pattackc-gen}

\smallskip
\item Extend the formulas for $u_\pP(q;n)$ for $q\leq4$ for a general one-move rider (Section~\ref{1move}) to larger numbers of pieces.  This should give more indications of the behavior of periods.

\smallskip
\item Evaluate the coefficient of $(q)_3$ in $q!\gamma_2$ for an arbitrary rider  in Theorem~\ref{T:gammapolysquare} to get a complete formula for $\gamma_2$.

\smallskip
\item It should be feasible to find an explicit formula for three pieces, similar to that for two pieces in Theorem~\ref{T:u2P}.  It would require a solution to Problem~\ref{improve}(a) for $q=4$.  There is one really new behavior: a subspace of codimension 3 may be given by three slope hyperplanes of different slopes on three pieces, that is, of type $\cU^3_3 = \cH^{d/c}_{ij} \cap \cH^{d'/c'}_{jk} \cap \cH^{d''/c''}_{ik}$; finding $\alpha(\cU)$ for $\cU^3_3$ looks harder than for the subspaces solved in Section~\ref{1or2}.  (Since $\cU^3_3$ does not exist for a 2-move rider, 2-move riders could be the first to work on.)
\end{enumerate}

%=====
\subsection{Recurrences and their lengths}\label{recurrence}\

\Kot\ obtained empirical formulas for $u_\pP(q;n)$ for relatively large numbers $q$ of various pieces (queen, bishop, nightrider, et al.) by computing the values for $n=1,2,\ldots,N$ where $N$ is fairly large, and looking for a heuristic recurrence relation.  He derives a generating function from that recurrence, then uses the generating function to get a quasipolynomial formula.  Since the recurrence is heuristic, the formula is unproved.  To prove his formula, if the period is $p$, he has to compute up to about $N=2pq$, because the degree of the quasipolynomial being $2q$, there are $2pq$ undetermined coefficients (the leading coefficient being known).  Worse, the period $p$ is unknown.  But if the recurrence is much shorter than $p$, he will find a recurrence (without proof) from a much smaller value of $N$.  That seems always to be the case if $p$ is large.  In other words, there seems to be a recurrence for $u_\pP(q;n)$ that is much shorter than $p$.  Can our method explain this?

The length of the recurrence is the degree of the denominator of the generating function of $u_\pP(n)$ when it is reduced to lowest terms.  
The explanation of a (relatively) short recurrence is that the generating function, which has the standard form $f(x)/(1-x^p)^{2q}$ where $f(x)$ is a polynomial and $p$ is the period, is not in lowest terms. 
Thus, there seems to be a systematic common factor of the numerator and denominator when expressed in standard form.  (One instance is \Kot's conjecture about the denominator for $q$ queens \cite[2nd ed., p.\ 14; 6th ed., p.\ 22]{ChMath}.)  Essentially nothing is known about the presumed common factor, starting with why it exists.  This seems the most important research problem in the subject.

%=====
\subsection{Period bounds}\label{bounds}\

As we saw in Section~\ref{recurrence} and will see throughout this series, periods and period bounds (which usually are denominators of inside-out polytopes) are essential information in obtaining formulas.  
However, the period of the whole quasipolynomial $u_\pP(q;n)$ is not the best indicator of the difficulty of computing $u_\pP(q;n)$. 
If we know the periods $p_i$ of the coefficients $\gamma_i$, the number of unknowns in interpolating the quasipolynomial from data becomes less than the number $2pq$ mentioned in Section~\ref{recurrence}.  Precisely stated, the number of undetermined coefficients in $u_\pP(q;n)$ (for fixed $q$), hence the number of values that are needed to determine all coefficients, is $\sum_{i=1}^{2q} p_i$, which in known examples is much smaller than $2pq$.

The difficulty is that it is a large task to compute the periods of all coefficients.  Thus, we would like to have a simple ``universal'' bound on $p_i$ that depends on $q$, $i$, and the set of moves, and is easy to compute.  To that end we offer a few conjectures of practical or theoretical interest.

\subsubsection{Plausible bounds}\label{plausible}\

For a piece $\pP$ with move set $\M$, the \emph{move range} $\|\pP\|$ is the maximum coordinate magnitude of a basic move, that is, 
$$
\|\pP\|:=\max_{(c_r,d_r)\in\M} \hatd_r, \text{ where } \hatd_r:=\max(|c_r|, |d_r|).
\label{d:range}
$$  
For each positive integral distance $\lambda$ there is a ``largest'' piece $\pP^{\max}_\lambda$ with move range $\lambda$; its basic move set is $\M_\lambda := \{ (c,d) : |c|, |d| \leq \lambda,\ \gcd(c,d)=1 \}$, which consists of every basic move consistent with having move range $\lambda$.  For instance, $\pP^{\max}_1$ is the queen and $\pP^{\max}_2$ is the combined queen and nightrider.  

\begin{conj}\label{Cj:maxpiece}
Among all pieces with $\|\pP\|\leq\lambda$, $\pP^{\max}_\lambda$ maximizes the period of every coefficient $\gamma_i$.  (We assume here that $q$ is fixed.)
\end{conj}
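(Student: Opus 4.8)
The plan is to reduce the conjecture to a single \emph{saturation} statement about $\pP^{\max}_\lambda$ alone. Since $\M_\lambda$ contains every primitive move of modulus at most $\lambda$, any competing piece satisfies $\M\subseteq\M_\lambda$, hence $\cA_\pP\subseteq\cA_{\pP^{\max}_\lambda}$ and $\cL(\cA_\pP)\subseteq\cL(\cA_{\pP^{\max}_\lambda})$. By the period bound in Theorem~\ref{T:gammapolysquare}, $p_i(\pP)$ divides the least common multiple of the periods of $\alpha(\cU;n)$ over $\cU$ of codimension $\le i$. For every $i\ge1$ this lcm already collapses to $\Lambda_\pP:=\lcm\{\hatd_r\}$: the hyperplanes $\cH^{d/c}_{ij}$ of codimension $1$ contribute each modulus $\hatd_r$ by Proposition~\ref{p:attackdc}, while every higher subspace has period an lcm of a subset of the $\hatd_r$ and so introduces no new prime. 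Thus $p_i(\pP)\mid\Lambda_\pP\mid\Lambda_{\pP^{\max}_\lambda}=\lcm(1,\dots,\lambda)$ for every piece and every $i\ge1$.

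Consequently it suffices to show that the maximal piece attains this common bound, namely that
\[
p_i(\pP^{\max}_\lambda)=\lcm(1,\dots,\lambda)\qquad(i\ge3),
\]
the cases $i\le2$ being trivial because $\gamma_0,\gamma_1,\gamma_2$ are constant by Theorem~\ref{T:gammapolysquare}. Granting this saturation, $p_i(\pP)\mid\lcm(1,\dots,\lambda)=p_i(\pP^{\max}_\lambda)$, which is exactly the asserted maximality in the divisibility order. The whole conjecture is therefore equivalent to the absence of period-reducing cancellation in $\gamma_i$ for the single richest piece.

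The base case $i=3$ I would settle by a positivity (zero-set) argument. By the proof of Proposition~\ref{P:2q-3 period1}, amalgamated over moves through Lemma~\ref{L:total01}, the periodic part of $\gamma_3$ for $\pP^{\max}_\lambda$ is a negative constant times $\sum_{r}\frac{\barn_r(\hatd_r-\barn_r)(\hatd_r-\hatc_r)}{\hatd_r^{2}}$, where $r$ runs over $\M_\lambda$ and $\barn_r:=n\bmod\hatd_r$; this is a sum of \emph{non-negative} summands. Such a sum vanishes exactly where every summand does, i.e.\ on the zero-set $Z=\{n:\barn_r=0\ \forall r\}=\lcm\{\hatd_r\}\,\bbZ=\lcm(1,\dots,\lambda)\,\bbZ$. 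If $P$ is any period then $f(n)=f(n+P)$ forces $Z=Z+P$; taking $n=0\in Z$ gives $P\in Z$, i.e.\ $\lcm(1,\dots,\lambda)\mid P$. With the reverse divisibility from the first paragraph this yields $p_3(\pP^{\max}_\lambda)=\lcm(1,\dots,\lambda)$. (Each move $(1,m)$ with $2\le m\le\lambda$ lies in $\M_\lambda$ and contributes a genuinely non-constant term of period $m$, so every $m\le\lambda$ is present.)

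For $i\ge4$ this manifest positivity is lost, and that is where I expect the real difficulty. Now the periodic part of $\gamma_i$ collects M\"obius-signed contributions from higher-codimension types in Equation~\eqref{E:gammasumsquare}---collinear blocks $\cW^{\,d/c}_{12\cdots k}$, their products, and mixed-slope subspaces---so no single sign governs it. The strategy would be to replace global positivity by a prime-by-prime non-degeneracy: for each prime power $\ell^{a}\le\lambda$, show that the $\ell^{a}$-periodic part of $\gamma_i(\pP^{\max}_\lambda)$ is nonzero, e.g.\ by comparing its value at $n\equiv0\pmod{\ell^{a}}$ with a nearby residue while freezing all data coprime to $\ell$. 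The linear independence of the falling factorials $(q)_\kappa$ prevents cancellation \emph{between} different $\kappa$ when \eqref{E:gammasumsquare} is read as an identity of polynomials in $q$; the genuine obstacles are then (i) cancellation \emph{within} a fixed $\kappa$ among subspace types of the same modulus but opposite M\"obius sign, and (ii) passing from the polynomial-in-$q$ statement back to fixed $q$. Step (i) is the crux: one must exhibit, for the maximal piece, a provably nonzero $\ell^{a}$-periodic combination playing the role that the positive hyperplane sum plays at $i=3$, and I would try to force its survival from the total symmetry of $\pP^{\max}_\lambda$, which contains \emph{every} slope of each modulus $\ell^{a}\le\lambda$.
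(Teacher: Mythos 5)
The statement you are addressing is labelled a \emph{conjecture} in the paper (Section~\ref{bounds}); the authors offer no proof of it, so there is nothing to compare your argument against --- it can only be judged on its own terms. On those terms, the central reduction in your first paragraph rests on a false claim. You assert that for every piece and every $i\ge1$ the period $p_i$ divides $\Lambda_\pP=\lcm\{\hatd_r\}$, on the grounds that ``every higher subspace has period an lcm of a subset of the $\hatd_r$.'' This is not true: the period of $\alpha(\cU;n)$ for a higher-codimension subspace $\cU$ is controlled by the denominators of the vertices of $\cU\cap\cube$, and these are not determined by the $\hatd_r$ alone. The queen $\pQ=\pP^{\max}_1$ is a counterexample: all four of its moves have $\hatd_r=1$, so $\Lambda_\pQ=1$, yet the period of $u_\pQ(q;n)$ grows with $q$ (the paper cites Kot\v{e}\v{s}ovec's conjecture that it is $\lcm(1,\dots,F_q)$), precisely because triangles of mutually attacking queens, i.e.\ subspaces of type $\cH^{d/c}_{12}\cap\cH^{d'/c'}_{23}\cap\cH^{d''/c''}_{13}$, produce vertices with denominators larger than $1$. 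Note also that the paper's own Conjectures~\ref{Cj:gamma3period} and \ref{Cj:gamma4period} restrict the ``period divides $\Lambda$'' assertion to $\gamma_3$ and $\gamma_4$ only, and even those are left as conjectures. Since your upper bound fails, the reduction of the whole problem to a single ``saturation'' statement about $\pP^{\max}_\lambda$ collapses: proving $p_i(\pP^{\max}_\lambda)$ attains some value would not show that an arbitrary competitor's $p_i$ divides it.

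What does survive is your treatment of $i=3$: because $\bar\gamma_1(\cU)=0$ for every subspace (Theorem~\ref{T:nearleading zero}) and the codimension-$3$ contributions to $\gamma_3$ are leading coefficients, hence constant, the periodic part of $\gamma_3$ really is the manifestly nonnegative hyperplane sum $-\binom{q}{2}(q!)^{-1}\sum_r(\hatd_r-\hatc_r)\barn_r(\hatd_r-\barn_r)/\hatd_r^2$, and your zero-set argument correctly pins its period at $\lcm(1,\dots,\lambda)$ for $\pP^{\max}_\lambda$. That is a genuine (partial) contribution consistent with Conjecture~\ref{Cj:gamma3period}, but it addresses only one coefficient of one piece; the conjecture as stated, quantified over all $i$ and all competing pieces, remains open, and your own discussion of $i\ge4$ concedes as much. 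You would also need to decide whether ``maximizes'' means in the divisibility order or numerically; your argument silently adopts the former.
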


\begin{prob}\label{Pr:maxpieceperiod}
Find the period of $u_{\pP^{\max}_\lambda}(q;n)$ and those of its coefficients.  Or find reasonably close bounds.
\end{prob}

These questions are surely hard and probably of theoretical interest only.  In particular, \Kot\ conjectures \cite[sixth ed., p.\ 31]{ChMath} that the period of the queen, $\pQ=\pP^{\max}_1$, is $\lcm(1,2,\ldots,F_q)$, $F_q$ being the Fibonacci number.  The period of the nightrider $\pN$, which has move range $\|\pN\|=2$, grows hugely with $q$.  It seems probable that for each $\lambda$ the period of $\pP^{\max}_\lambda$ grows extremely rapidly with $q$ (and also with $\lambda$), but if \Kot\ is right, it may follow a discernible pattern.  If so, that pattern may be of use.  

\subsubsection{Observed bounds}\label{observed}\

What we most want, though, is a simple formula in terms of, say, $q$ and $\|\pP\|$ that gives an upper bound on the period $p$, or better, which is guaranteed to have $p$ as a divisor.  We have no conjecture about this, but we propose a low bound on the periods of the highest nonconstant coefficients.
Define 
$$
\Lambda:=\lcm\{\hatd_r : m_r \in \M \}.
\label{d:Lambda}
$$

\begin{conj}\label{Cj:gamma3period}
The period of $\gamma_3$ is $1$ or $\Lambda$.  (We see period $\Lambda$ for two pieces---see Theorem~\ref{T:u2P}; for any number of one-move riders---see Proposition~\ref{P:2q-3 period1}; for three partial queens---see Part~III; and for two partial nightriders---see Section~\N.)
\end{conj}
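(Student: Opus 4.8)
The plan is to extract the genuinely residue-dependent (periodic) part of $\gamma_3$ from the structural formula for $q!\gamma_i$ in Theorem~\ref{T:gammapolysquare} and then analyze its period directly. In Equation~\eqref{E:typesum2} the contribution of a subspace type $[\cU_\kappa^\nu]$ to the coefficient of $n^{2q-3}$ (that is, to $q!\gamma_3$) is the part with $\nu+j=3$, where $j$ indexes the coefficient $\bar\gamma_j(\cU)$ of $\alpha(\cU;n)$. First I would sort these contributions by codimension $\nu$: subspaces of codimension $\nu=3$ contribute only their leading coefficient $\bar\gamma_0(\cU)$, which is a relative volume and hence constant in $n$; subspaces of codimension $\nu=2$ contribute $\bar\gamma_1(\cU)$, which vanishes identically by Theorem~\ref{T:nearleading zero}; and subspaces of codimension $\nu\geq4$ make no contribution to $\gamma_3$ at all. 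Thus every contribution to $\gamma_3$ that is nonconstant in $n$ comes from the codimension-$1$ terms, i.e.\ from hyperplanes $\cH^{d_r/c_r}_{ij}$, through the coefficient $\bar\gamma_2(\cH^{d_r/c_r})$.

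Carrying out the hyperplane bookkeeping, with $\mu(\hat0,\cH)=-1$ and $|\Aut\cH|=2$, gives
\[
q!\gamma_3 = -\tfrac{(q)_2}{2}\sum_{r=1}^{|\M|}\bar\gamma_2\big(\cH^{d_r/c_r}\big) + (\text{terms constant in }n),
\]
and $\bar\gamma_2(\cH^{d_r/c_r})$ is the coefficient of $n^1$ in $\alpha^{d_r/c_r}(n)$, which one reads off from Equation~\eqref{E:alpha d/c}. Its periodic part is $\barn_r(\hatd_r-\barn_r)(\hatd_r-\hatc_r)/\hatd_r^2$ with $\barn_r:=(n\bmod\hatd_r)$, so the periodic part of $\gamma_3$ is
\[
-\frac{1}{2(q-2)!}\sum_{r=1}^{|\M|}\frac{\barn_r(\hatd_r-\barn_r)(\hatd_r-\hatc_r)}{\hatd_r^2}.
\]
This recovers Proposition~\ref{P:2q-3 period1} when $|\M|=1$, and it shows immediately that the period of $\gamma_3$ divides $\Lambda$.

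The remaining, and genuinely delicate, point is to prove the period is exactly $\Lambda$ (rather than an intermediate divisor) whenever it is not $1$; a sum of periodic functions whose periods share common factors can in principle have a strictly smaller period through cancellation, which is presumably why the statement was posed as a conjecture. The key observation I would exploit is positivity. Write $S(n):=\sum_r \barn_r(\hatd_r-\barn_r)(\hatd_r-\hatc_r)/\hatd_r^2$. Each summand is non-negative, since $0\le\barn_r\le\hatd_r-1$ and $\hatc_r\le\hatd_r$, and each summand vanishes exactly when $\hatd_r\mid n$: the factor $\barn_r(\hatd_r-\barn_r)$ vanishes iff $\barn_r=0$, while the factor $\hatd_r-\hatc_r$ can vanish only in the degenerate case $\hatd_r=\hatc_r=1$, where $\hatd_r\mid n$ holds for every $n$. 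Hence $S(n)=0$ if and only if $\hatd_r\mid n$ for every $r$, i.e.\ if and only if $\Lambda\mid n$. Now suppose $S$ has period $p\mid\Lambda$; then $S(p)=S(0)=0$, so $\Lambda\mid p$, forcing $p=\Lambda$. If instead every summand is identically zero, then every $\hatd_r=1$, so $\Lambda=1$ and the period is $1$. Either way the period is $1$ or $\Lambda$.

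The main obstacle to anticipate is precisely this period-is-exactly-$\Lambda$ step: without the positivity remark one is reduced to proving non-cancellation among sawtooth-type functions with overlapping periods, which looks hard and is the content the conjecture flags. I expect the non-negativity of the hyperplane contributions---a feature special to the square board, where the periodic part of $\alpha^{d/c}$ carries the single sign displayed in Equation~\eqref{E:alpha d/c}---to be the decisive ingredient, reducing the whole question to the elementary zero-set argument above.
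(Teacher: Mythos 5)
There is no ``paper's own proof'' to compare against here: the statement is posed as Conjecture~\ref{Cj:gamma3period}, supported only by the examples listed in its parenthesis. Your argument is therefore not a variant of the paper's proof but an attempt to settle the conjecture, and as far as I can check it succeeds, by assembling pieces the paper already provides. The codimension bookkeeping is right: from Equation~\eqref{E:typesum2} the coefficient of $n^{2q-3}$ receives $\bar\gamma_0(\cU)$ from $\nu=3$ (a relative volume, hence constant), $\bar\gamma_1(\cU)$ from $\nu=2$ (zero by Theorem~\ref{T:nearleading zero}), $\bar\gamma_2(\cH)$ from hyperplanes, and nothing from $\nu=0$ or $\nu\geq4$; so all $n$-dependence sits in $-\tfrac{(q)_2}{2}\sum_r\bar\gamma_2(\cH^{d_r/c_r})$, whose periodic part you read off correctly from Equation~\eqref{E:alpha d/c}. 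This is exactly the statement, implicit in Theorem~\ref{T:gammapolysquare} and Lemma~\ref{L:total01}, that $\bar\theta_{3,2}=-a_{12}/2$ is the only $n$-dependent coefficient of $(q)_\kappa$ in $q!\gamma_3$; your contribution is to make the constancy of $\bar\theta_{3,\kappa}$ for $\kappa\geq3$ explicit via Theorem~\ref{T:nearleading zero}. The final positivity argument --- each summand of $S(n)$ is nonnegative and vanishes exactly when $\hatd_r\mid n$, so $S$ vanishes exactly on $\Lambda\bbZ$ and hence has period exactly $\Lambda$ --- is the same device the paper uses to prove the period claims in Theorem~\ref{T:u2P} and Proposition~\ref{P:2q-3 period1}, and it is sound (the minimal period divides $\Lambda$ and must itself be a zero of $S$). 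In effect you have generalized the proof of Proposition~\ref{P:2q-3 period1} from one move to an arbitrary move set, with Theorem~\ref{T:nearleading zero} replacing the ad hoc codimension-2 case analysis, and obtained the slightly stronger conclusion that the period equals $\Lambda$ exactly (which is $1$ precisely when every $\hatd_r=1$).

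Two cautions before you claim the conjecture as a theorem. First, your proof leans entirely on Theorem~\ref{T:nearleading zero} for the codimension-2 types (such as $\cH^{l_1}_{12}\cap\cH^{l_2}_{23}$ with $l_1\neq l_2$, whose Ehrhart quasipolynomial the paper admits it cannot compute); make sure you are comfortable that that theorem, whose proof in the paper is terse, really covers every $\cU\in\cL(\cA_\pP)$ and not just the hyperplanes and $\cW$-type subspaces computed explicitly in Section~\ref{1or2}. Second, state the hypothesis $q\geq2$ explicitly, since for $q=1$ there are no hyperplanes and no $\gamma_3$. Neither point is a gap in the logic, but given that the authors chose to leave this as a conjecture while possessing all the ingredients you used, a careful re-derivation of Equation~\eqref{E:typesum2}'s range of summation for $i=3$ is worth writing out in full.
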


\begin{conj}\label{Cj:gamma4period}
The period of $\gamma_4$ divides $\Lambda$.  
\end{conj}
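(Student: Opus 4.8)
The plan is to bound the period of $\gamma_4$ through Theorem~\ref{T:gammapolysquare}, which expresses $q!\gamma_4$ as a $\bbZ[q]$-combination of the coefficients $\bar\gamma_{4-\nu}(\cU)$ as $\cU$ ranges over intersection-subspace types of codimension $\nu\le 4$. Since the polynomial factors $(q)_\kappa$ do not affect the dependence on $n$, it suffices to show that each contributing $\bar\gamma_{4-\nu}(\cU)$ has period dividing $\Lambda$. The codimension-$4$ subspaces enter only through $\bar\gamma_0(\cU)$, a relative volume, which is constant. The decisive simplification occurs in codimension~$3$: these enter only through $\bar\gamma_{4-3}(\cU)=\bar\gamma_1(\cU)$, which vanishes identically by Theorem~\ref{T:nearleading zero}. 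In particular every ``triangle'' $\cH^{d/c}_{ij}\cap\cH^{d'/c'}_{jk}\cap\cH^{d''/c''}_{ik}$, the source of the troublesome cross-slope denominators, drops out of $\gamma_4$ altogether. The codimension-$1$ subspaces are the hyperplanes, entering through $\bar\gamma_3(\cH^{d/c}_{ij})$, the constant term of $\alpha^{d/c}(n)$, whose period is $\hatd\mid\Lambda$ by Proposition~\ref{p:attackdc}.

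It remains to treat codimension~$2$, through $\bar\gamma_2(\cU)$. Assuming (as in the proof of Theorem~\ref{T:nearleading zero}) that no pair of pieces satisfies two attack equations, a codimension-$2$ subspace is one of three isomorphism types: the collinear triple $\cW^{\,d/c}_{123}$; a product $\cH^{m}_{12}\cap\cH^{m'}_{34}$ on disjoint pairs; or a ``cherry'' $\cH^{m}_{12}\cap\cH^{m'}_{23}$ of two distinct slopes through a shared piece (the coincidence $\cW^{\,=}_{12}$ gives $\bar\gamma_2(n^2)=0$). For $\cW^{\,d/c}_{123}$ one reads off from $\beta^{d/c}(n)$ in Proposition~\ref{p:attackdc} that the coefficient of $n^2$ has period $\hatd\mid\Lambda$. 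For the disjoint product $\alpha(\cU;n)=\alpha^{m}(n)\,\alpha^{m'}(n)$; writing $\alpha^{m}(n)=A_m n^3+B_m n+C_m$ (no $n^2$ term, by Proposition~\ref{p:attackdc}) with $A_m$ constant and $B_m$ of period $\hatd$, the coefficient of $n^4$ is $A_m B_{m'}+B_m A_{m'}$, whose period divides $\lcm(\hatd,\hatd')\mid\Lambda$.

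The sole genuine obstacle is the cherry $\cU=\cH^{m}_{12}\cap\cH^{m'}_{23}$, for which $\alpha(\cU;n)$ is not computed in this paper and may well have period exceeding $\Lambda$. Here I would not compute $\alpha(\cU;n)$ but bound the period of $\bar\gamma_2(\cU)$ alone, via the index bound \cite[Theorem~6]{McM} used in the proof of Theorem~\ref{T:nearleading zero}: the period of the coefficient of $n^{\dim\tcU-2}$ divides the $2$-index $\pi_2$ of $P:=\tcU\cap[0,1]^6$, i.e.\ the least $\pi$ such that the affine hull of every $2$-face of the $4$-polytope $P$ is spanned by rational points of denominator $\pi$. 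Parametrize $P$ by $(z_1,t_1,t_2)$ with $z_2=z_1+t_1 m$ and $z_3=z_2+t_2 m'$, so that each of the twelve facets sets one coordinate of $z_1,z_2,z_3$ to $0$ or $1$; a $2$-face is the vanishing of two of these integer-linear forms in $(x_1,y_1,t_1,t_2)$. The point is that every such pair can be solved with denominators dividing $\Lambda=\lcm(\hatd,\hatd')$, \emph{never} invoking the cross-determinant $\Delta:=cd'-c'd$: since $x_1$ occurs with coefficient $1$ in each of $x_1,x_2,x_3$ and $y_1$ with coefficient $1$ in each of $y_1,y_2,y_3$, one solves any ``$x$-type'' form for $x_1$ and any ``$y$-type'' form for $y_1$, leaving $(t_1,t_2)$ free; while two forms of the same type reduce, after eliminating $x_1$ (resp.\ $y_1$), to a single relation $ct_1+c't_2=k$ (resp.\ $dt_1+d't_2=k$) with $k\in\bbZ$, whose solution line is spanned by the integer direction $(c',-c)$ (resp.\ $(d',-d)$) together with a base point of denominator dividing $\Lambda$. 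In every case the affine hull of the $2$-face is generated by points of $\tfrac1\Lambda\bbZ^4$, hence by $\tfrac1\Lambda\bbZ^6$-points in the original coordinates, so $\pi_2\mid\Lambda$ and the period of $\bar\gamma_2(\cU)$ divides $\Lambda$. Combining all four codimensions yields that the period of $\gamma_4$ divides $\Lambda$.

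The main difficulty I anticipate lies entirely in this last step: one must verify that \emph{every} $2$-face of $P$ admits the required low-denominator generators, including degenerate positions and the axis-move cases $c=0$ (where $\hatd=1$, and several of the forms coincide so the intersection is not a $2$-face), and one must apply the index hypothesis to genuine $2$-dimensional faces rather than over- or under-determined intersections. The conceptual content is already visible, however: the cross-slope denominator $\Delta$ enters $\alpha(\cU;n)$ only at the vertices of $P$, which govern the \emph{lowest} Ehrhart coefficients, whereas $\bar\gamma_2$ sees only the $2$-faces, and a $2$-face always retains enough freedom to avoid $\Delta$.
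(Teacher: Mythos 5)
This statement is Conjecture~\ref{Cj:gamma4period} in the paper; the authors offer no proof of it, so there is nothing to compare your argument against --- it must stand on its own. Your reduction via Theorem~\ref{T:gammapolysquare} is sound as far as it goes: codimension~$4$ contributes only constant leading coefficients, codimension~$3$ drops out by Theorem~\ref{T:nearleading zero}, codimension~$1$ and the types $\cW^{\,=}_{12}$, $\cW^{\,d/c}_{123}$ and $\cH^{m}_{12}\cap\cH^{m'}_{34}$ in codimension~$2$ all have the required period by Proposition~\ref{p:attackdc}. You have correctly isolated the cherry $\cH^{m}_{12}\cap\cH^{m'}_{23}$ as the only obstruction (the paper itself flags $\alpha(\cH^{l_1}_{12}\cap\cH^{l_2}_{23};n)$ as beyond its methods in Section~\ref{coeffsformsquare}).

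However, the step you use to dispose of the cherry is false. The claim that every same-type pair of facet constraints admits a base point of denominator dividing $\Lambda$ fails exactly for the pair $\{x_1=\epsilon,\,x_3=\epsilon''\}$ with $\epsilon\neq\epsilon''$ (and its $y$-analogue), where the relation is $ct_1+c't_2=\pm1$: a point of $\tfrac1\Lambda\bbZ^6$ on that flat forces $t_1,t_2\in\tfrac1\Lambda\bbZ$ (since $\gcd(c,d)=\gcd(c',d')=1$), which requires $\gcd(c,c')\mid\Lambda$, and $\gcd(c,c')$ need not divide $\Lambda=\lcm(\hatd,\hatd')$. Concretely, take $\M=\{(2,3),(2,-5)\}$, so $\Lambda=15$, and consider the $2$-face $F$ of $P=\tcU\cap[0,1]^6$ cut out by $x_1=0$ and $x_3=1$. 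Then $2t_1+2t_2=1$, and $F$ is a genuine nonempty $2$-face: the point $(x_1,y_1,t_1,t_2)=(0,\,0.03,\,0.31,\,0.19)$ lies on it with all other facet inequalities strict, and $(y_1,t_1)$ may be perturbed freely. Yet every point of its affine hull has $t_1+t_2=\tfrac12\notin\tfrac1{15}\bbZ$, so $F$ contains no point of denominator $15$; the $2$-index of $P$ is even and does not divide $\Lambda$. (Your own budget argument shows the analogous $\{x_1,x_2\}$ and $\{x_2,x_3\}$ faces are harmless because nonemptiness forces $\hatd=|c|$ there; it is precisely the $\{x_1,x_3\}$ face, where the unit displacement is shared between two moves of opposite vertical sense, that escapes.) McMullen's theorem therefore does not bound the period of $\bar\gamma_2(\cH^{m}_{12}\cap\cH^{m'}_{23})$ by $\Lambda$, and the conjecture remains open; closing it would require either computing $\alpha$ for the cherry directly or a cancellation argument across faces that the index method cannot see.
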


A second kind of maximal piece, suggested by Conjectures~\ref{Cj:gamma3period} and \ref{Cj:gamma4period}, is $\pP^{\lcm}_\lambda$, whose move set consists of all moves $(c,d)$ such that $\hatd|\lambda$.  It might be called ``lcm-maximal''.  Conceivably it may be more natural than $\pP^{\max}_\lambda$ for bounding periods.

\begin{conj}\label{Cj:lcmpiece}
Among all pieces with $\lcm\{\hatd_r : m_r \in \M \}=\lambda$, $\pP^{\lcm}_\lambda$ maximizes the period of every coefficient $\gamma_i$.  (We assume here that $q$ is fixed.)
\end{conj}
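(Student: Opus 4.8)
The plan is to recast the conjecture as a divisibility statement and then reduce it to monotonicity of periods under enlargement of the move set. Every piece $\pP$ in the family has $\hatd_r \mid \lambda$ for each basic move $m_r$ (since $\lcm_r \hatd_r = \lambda$), so $\M \subseteq \M^{\lcm}_\lambda$, the move set of $\pP^{\lcm}_\lambda$. Moreover, by the period bound in Theorem~\ref{T:gammapolysquare} together with Proposition~\ref{p:attackdc}, the period $p_i$ of $\gamma_i$ in $u_\pP(q;n)$ divides $\lambda$ for every member of the family, including $\pP^{\lcm}_\lambda$. Hence it suffices to prove that $p_i(\pP) \mid p_i(\pP^{\lcm}_\lambda)$ for each $i$ and each family member $\pP$: this places every family period below $p_i(\pP^{\lcm}_\lambda)$ in the divisibility order, and the bound is attained by $\pP^{\lcm}_\lambda$ itself, so $\pP^{\lcm}_\lambda$ is the numerical maximum.

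First I would set up the comparison through Equation~\eqref{E:gammasumsquare}. The inclusion $\M \subseteq \M^{\lcm}_\lambda$ induces an inclusion of intersection lattices $\cL(\cA_\pP^q) \subseteq \cL(\cA_{\pP^{\lcm}_\lambda}^q)$ that preserves $\mu(\hat0,\cU)$, $|\Aut(\cU)|$, and the reduced Ehrhart data $\bar\gamma_{i-\nu}(\cU)$ of each subspace. Consequently $q!\gamma_i(\pP)$ is exactly the partial sum of the formula for $q!\gamma_i(\pP^{\lcm}_\lambda)$ over those subspace types all of whose slopes lie in $\M$, so that $\gamma_i(\pP^{\lcm}_\lambda) = \gamma_i(\pP) + R_i(n)$, where $R_i$ collects the contributions of subspaces using at least one slope from $\M^{\lcm}_\lambda \setminus \M$. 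The task becomes: for each prime $\ell \mid \lambda$, the $\ell$-adic valuation of the period of $\gamma_i(\pP)$ is at most that of $\gamma_i(\pP) + R_i$.

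To read off these valuations I would localize at a prime $\ell$ and apply the difference operators $f \mapsto f(n) - f(n + \lambda/\ell^{k})$. Such an operator annihilates precisely the constituents whose period has $\ell$-valuation below $v_\ell(\lambda) - k + 1$, so iterating them peels off the successive $\ell$-layers of the period and reduces the comparison to one prime-power layer at a time. The point is that shifting $n$ by $\lambda/\ell$ moves the residue $\barn_r = n \bmod \hatd_r$ only for those slopes with $v_\ell(\hatd_r) = v_\ell(\lambda)$; by Proposition~\ref{p:attackdc} and Corollary~\ref{C:coeffcycle} the operator therefore acts factor by factor on the $\barn_r$-dependence of each $\alpha^{m_r}$ appearing in a subspace contribution, and is nonzero on a given term only when that term involves a ``full-valuation'' slope. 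Thus $\ell^{v_\ell(\lambda)} \mid p_i(\pP)$ forces a surviving, non-cancelling contribution in $\gamma_i(\pP)$ built from a witness slope $m_r \in \M$ with $v_\ell(\hatd_r) = v_\ell(\lambda)$, and the plan is to show the identical witness produces a non-cancelling contribution in $\gamma_i(\pP^{\lcm}_\lambda)$, since enlarging $\M$ only adds terms to the M\"obius-weighted sum.

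I expect the last implication to be the genuine obstacle, and the reason the statement is only conjectured: periods are not monotone under passing to partial sums, so one must rule out that the extra terms $R_i$ conspire to cancel exactly the $\ell$-layer that $\gamma_i(\pP)$ carries. Controlling this requires detailed knowledge of $\bar\gamma_{i-\nu}(\cU)$ for subspaces of mixed slopes --- precisely the quantities flagged as intractable in Sections~\ref{coeffsformsquare} and~\ref{last} (the types $\cU_{2i-1\mathrm{b}}^i$ and $\cU^3_3$). A sensible order of attack is therefore to prove the low-order cases first: for $i = 3$, Proposition~\ref{P:2q-3 period1} shows that only hyperplane contributions survive, the witness argument is exact, and one recovers Conjecture~\ref{Cj:gamma3period}; for $i = 4$, the codimension-two subspaces contribute in a still-controlled way, giving Conjecture~\ref{Cj:gamma4period}; and a full proof would then proceed by induction on $i$, using that the top contribution $\bar\theta_{i,2i}$ is governed solely by the constant $a_{10}$ and carries no periodicity, so that all $\ell$-layers must originate in the lower $(q)_\kappa$-terms handled by the localization above.
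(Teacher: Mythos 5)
This statement is Conjecture~\ref{Cj:lcmpiece}; the paper offers no proof of it, so there is nothing to compare your argument against --- it is presented as an open problem, and your sketch does not close it. Beyond the gap you yourself flag (ruling out cancellation from the extra terms $R_i$, which is indeed the heart of the matter), two earlier steps are already wrong. First, your reduction rests on the claim that the period $p_i$ of every $\gamma_i$ divides $\lambda$ for every piece in the family. That is false: the period bound in Theorem~\ref{T:gammapolysquare} is the lcm of the periods of $\alpha(\cU;n)$ over \emph{all} $\cU$ with $\codim\cU\leq i$, and for mixed-slope subspaces such as $\cH_{12}^{l_1}\cap\cH_{23}^{l_2}$ with $l_1\neq l_2$ these periods are not controlled by $\lcm_r\hatd_r$ at all (the paper explicitly declares them intractable). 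The queen is a concrete counterexample: there $\lambda=1$, yet the period of $u_\pQ(q;n)$ --- hence of some $\gamma_i$ --- exceeds $1$ for larger $q$. So the conjecture cannot be recast as ``all periods divide $\lambda$ and $\pP^{\lcm}_\lambda$ attains $\lambda$''; only $\gamma_3$ and $\gamma_4$ are even conjectured (Conjectures~\ref{Cj:gamma3period} and~\ref{Cj:gamma4period}) to have periods tied to $\Lambda$.

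Second, the decomposition $\gamma_i(\pP^{\lcm}_\lambda)=\gamma_i(\pP)+R_i(n)$ via an inclusion of intersection lattices that ``preserves $\mu(\hat0,\cU)$'' does not hold. The M\"obius function is computed in the interval $[\hat0,\cU]$ of the ambient lattice, and enlarging $\M$ enlarges that interval. The paper's own computation shows $\mu(\hat0,\cW^{\,=}_{12})=|\M|-1$, which changes when moves are added, so the contributions of subspaces already present for $\pP$ are \emph{not} simply carried over as a partial sum of the formula for $\pP^{\lcm}_\lambda$. Any honest attempt along these lines must track how both the M\"obius values and the new mixed-slope subspaces reweight the old terms; your localization-at-a-prime idea is a reasonable organizing device for the single-slope contributions (and does recover the known case $i=3$ via Proposition~\ref{P:2q-3 period1}), but as written the argument does not establish the conjecture, nor could it, given the counterexample to its first reduction.
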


\begin{prob}\label{Pr:lcmpieceperiod}
Find the period of $u_{\pP^{\lcm}_\lambda}(q;n)$ and those of its coefficients.  Or find reasonably close bounds.
\end{prob}

\subsubsection{The geometry of coefficient periods}\

Despite the hopes expressed in the preceding conjectures, the most effective way to bound periods may be to find subspace denominators by geometrical computation.  Geometry also suggests a general monotonicity property. 

We see in formulas here and in Parts~III (Theorem~\Phvdiag) and IV and in \Kot's book \cite{ChMath} that the period $p_i$ of $\gamma_i$ tends to increase with $i$.  The geometry suggests that should be a general truth.  
Let $\cL_0^i = \{ \cU \in \cL(\cA_\pP) : \codim\cU\leq i\}$.  Each $\gamma_i$ depends on the subspaces in $\cL_0^i$.  
Define the denominator $D_i^q$ of the system $(\cB^q,\cL_0^i)$ to be the least common denominator of coordinates of all points determined by intersecting a subspace $\cU\in\cL_0^i$ with the boundary of $\cube$.  Every point so determined for $i$ is also so determined for $i+1$, since $\cL_0^i \subseteq \cL_0^{i+1}.$  It follows that each $D_{i+1}^q$ is a multiple of $D_i^q$.  By Ehrhart theory $p_i$ divides $D_i^q$; we expect $p_i$ to increase weakly with $i$.

\begin{conj}\label{Cj:periodincrease}
The periods $p_i$ of $u_\pP(q;n)$ are weakly monotonically increasing: $1 = p_0 = p_1 \leq \cdots \leq p_{2q}$.  (If so, then the whole period $p=p_{2q}$.)  More precisely, $p_i$ divides $p_{i+1}$.
\end{conj}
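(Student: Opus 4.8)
The plan is to reduce the global statement about $u_\pP(q;n)$ to a purely local statement about each subspace Ehrhart function, using the explicit coefficient formula. By Equation~\eqref{E:gammasumsquare}, for fixed $q$ the coefficient $\gamma_i$ is a fixed rational combination of the subspace coefficients $\bar\gamma_{i-\codim\cU}(\cU)$, taken over $\cU \in \cL(\cA_\pP)$ with $\codim\cU \le i$; only these functions carry the $n$-dependence. Consequently $p_i$ divides $L_i := \lcm\{\, p_{i-\codim\cU}(\cU) : \cU\in\cL(\cA_\pP),\ \codim\cU\le i \,\}$. The essential observation is that passing from $\gamma_i$ to $\gamma_{i+1}$ advances each subspace's contributing coefficient by exactly one index, $\bar\gamma_{i-\codim\cU}(\cU)\mapsto\bar\gamma_{i+1-\codim\cU}(\cU)$, while the newly admitted subspaces of codimension exactly $i+1$ enter only through their leading coefficient $\bar\gamma_0(\cU)$, which is constant. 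Thus the conjecture would follow from two ingredients: (L) a local monotonicity claim, that for each fixed $\cU$ the coefficient periods satisfy $p_j(\cU)\mid p_{j+1}(\cU)$; and (NC) the absence of period collapse, that $p_i$ actually equals the bound $L_i$. Granting both, (L) gives $L_i\mid L_{i+1}$ (every term of $L_i$ divides some term of $L_{i+1}$), and (NC) gives $p_i=L_i$ and $p_{i+1}=L_{i+1}$, whence $p_i\mid p_{i+1}$.

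For the local claim (L) I would translate it into Ehrhart language for the polytope $\cP_\cU := \cube\cap\tcU$ and apply McMullen's theorem \cite{McM}: $p_j(\cU)$ divides the $j$-index $\pi_j(\cP_\cU)$, the least denominator realizable on every codimension-$j$ face. Faces of $\cP_\cU$ are cut out by fixing some coordinates $x_i,y_i$ to $0$ or $1$ together with the integral attack equations $d_r(x_j-x_i)=c_r(y_j-y_i)$, so every facial denominator is built from the move data $(c_r,d_r)$, and each codimension-$(j+1)$ face lies in a codimension-$j$ face. This nesting is exactly the denominator monotonicity $D_j^q\mid D_{j+1}^q$ already recorded before the conjecture; the task is to upgrade it from the crude denominators to the indices and then to the actual periods. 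Here I would lean on the special structure of the square board---the Parity Theorem~\ref{T:parity} and its corollaries, which pin down the vanishing and the symmetry of the constituents---and on the coefficient-by-coefficient period bookkeeping carried out in Proposition~\ref{p:attackdc} as the model case to imitate in higher codimension.

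The main obstacle is that neither ingredient is a formal consequence of general theory. The local claim (L) is an instance of a statement that is simply false for arbitrary rational polytopes: Ehrhart coefficient periods need not be monotone, and the chain $p_j\mid\pi_j\mid D_j^q$ can be badly non-sharp because of period collapse. Hence (L) cannot be imported wholesale; one must prove that move subspaces are rigid enough that the index and denominator monotonicity really does descend to the periods. The same difficulty, compounded, appears in (NC): a priori the rational combination defining $\gamma_i$ could suffer cancellation that deflates $p_i$ below $L_i$. I expect this no-collapse/sharpness step to be the crux. A plausible attack is constructive: for each $i$ exhibit inside $\gamma_{i+1}$ a single residue-dependent contribution---coming from one carefully chosen subspace, say a steepest-slope hyperplane or a collinearity subspace $\cW$---whose coefficient provably realizes period (a multiple of) $L_i$, in the spirit of the explicit one-move computation of Proposition~\ref{P:2q-3 period1}. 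If such a separating term can be produced uniformly in $i$, it forces $L_i\mid p_{i+1}$ and, with $p_i\mid L_i$, closes the argument; without it the conjecture stays open.
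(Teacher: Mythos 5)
This statement is a conjecture; the paper does not prove it. Its only support in the text is the heuristic paragraph immediately preceding it: $p_i$ divides the denominator $D_i^q$ of the system $(\cB^q,\cL_0^i)$, and $D_i^q \mid D_{i+1}^q$ because $\cL_0^i \subseteq \cL_0^{i+1}$, so one \emph{expects} $p_i \mid p_{i+1}$. Your proposal is a more refined version of the same heuristic, run at the level of the subspace coefficient periods $p_{j}(\cU)$ rather than the cruder denominators, and it does not close the argument --- as you yourself say. So there is a genuine gap, but it is one you have correctly located rather than papered over.

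Concretely: your ingredient (NC), that no cancellation deflates $p_i$ below the lcm bound $L_i$, is essentially the paper's own separate Conjecture~\ref{Cj:perioddenom} (period equals denominator), which the authors also leave open; assuming it here would be circular as far as the paper's program is concerned. Your ingredient (L), local monotonicity $p_j(\cU)\mid p_{j+1}(\cU)$ for each move subspace, is, as you note, an instance of a statement that fails for general rational polytopes, and nothing in the paper (the Parity Theorem, Theorem~\ref{T:nearleading zero}, or Proposition~\ref{p:attackdc}) supplies the rigidity needed to establish it for the polytopes $\cube\cap\tcU$. One small additional caution about your reduction: the bookkeeping $p_i \mid L_i$ and $L_i \mid L_{i+1}$ is fine, but even granting (L) and (NC) you would need (NC) to hold \emph{uniformly} in $i$, i.e., that the separating residue-dependent term you propose to exhibit survives the M\"obius-weighted sum over all isomorphism types in Equation~\eqref{E:gammasumsquare}; the paper's own evidence (e.g., the vanishing of $\bar\gamma_1(\cU)$ in Theorem~\ref{T:nearleading zero}, and the cancellation that makes $\gamma_2$ constant) shows that such cancellations do occur at low $i$, so this is exactly where the difficulty concentrates. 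In short: your route is consistent with, and slightly sharper than, the authors' stated motivation, but the conjecture remains unproved by both.
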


And here is a final, stronger conjecture.  In Ehrhart theory in general the period need not equal the denominator, but in our formulas we always find equality.

\begin{conj}\label{Cj:perioddenom}
The period $p$ of $u_\pP(q;n)$ equals the denominator $D(\cube,\cA_\pP)$.  The period $p_i$ of $\gamma_i$ in $u_\pP(q;n)$ equals $D_i^q$.
\end{conj}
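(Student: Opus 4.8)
The plan is to deduce the first assertion from the second and then attack the second one prime by prime. For the reduction, recall that the period of any quasipolynomial is the least common multiple of the periods of its coefficients, so $p=\lcm_i p_i$. On the geometric side, every vertex of the inside-out polytope $(\cube,\cA_\pP)$ arises by intersecting some flat $\cU\in\cL(\cA_\pP)$ with a face of $\cube$, and since $\cL_0^i\subseteq\cL_0^{i+1}$ forces $D_i^q\mid D_{i+1}^q$, we get $D(\cube,\cA_\pP)=\lcm_i D_i^q$, attained at the top index. Hence if $p_i=D_i^q$ for every $i$, then $p=\lcm_i p_i=\lcm_i D_i^q=D(\cube,\cA_\pP)$, the first assertion. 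So it suffices to prove $p_i=D_i^q$.

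One inclusion, $p_i\mid D_i^q$, is Ehrhart theory and is already recorded just before the statement. The reverse, $D_i^q\mid p_i$, is the real content: it says that the period of $\gamma_i$ does not collapse to a proper divisor of its geometric denominator. My strategy is to work from the explicit expansion \eqref{E:typesum2}, in which $q!\gamma_i=\sum_{\kappa}(q)_\kappa\sum_{\nu}\sum_{[\cU]}\mu(\hat0,\cU)\,|\Aut\cU|^{-1}\,\bar\gamma_{i-\nu}(\cU)$, the sum running over isomorphism types of flats of codimension $\nu\le i$. Each $\bar\gamma_{i-\nu}(\cU)$ has period dividing the denominator of $\cU$, and those denominators have least common multiple $D_i^q$; the goal is to show that the full $D_i^q$ is actually realized and is not lost to cancellation in this signed sum.

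I would localize at a prime. Fix a prime power $\ell^a\,\|\,D_i^q$ and choose a flat $\cU^\ast$ of codimension at most $i$ whose intersection with $\partial\cube$ produces a coordinate with denominator exactly divisible by $\ell^a$; by Ehrhart theory this forces the contributing coefficient $\bar\gamma_{i-\codim\cU^\ast}(\cU^\ast)$ to have $\ell$-part of period equal to $\ell^a$. It then remains to show that this $\ell^a$-periodicity survives in $\gamma_i$, i.e.\ that the M\"obius- and automorphism-weighted sum of all $\bar\gamma$-terms sharing the modulus $\ell^a$ retains period divisible by $\ell^a$. The codimension-one base case is exactly Proposition~\ref{p:attackdc}, where $\alpha^{d/c}(n)$ is shown to have period precisely $d$; the Coefficient Parity relations of Corollary~\ref{C:coeffcycle} pin down which periodic constituents are forced to be nonzero and should drive an induction on codimension.

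The hard part will be this no-cancellation step. Several subspace types can share the same denominator $\ell^a$, and they enter \eqref{E:typesum2} with opposite M\"obius signs, so a priori their top-frequency periodic parts could cancel and drop the period of $\gamma_i$ below $D_i^q$. Ruling this out amounts to a nonvanishing theorem for a signed combination of Ehrhart coefficients ranging over all flat types of a given denominator---precisely the kind of control that is absent in general Ehrhart theory and that makes equality, rather than mere divisibility, of period and denominator exceptional. A realistic intermediate target is to establish $D_i^q\mid p_i$ for the leading periodic coefficient at each codimension, where a single extremal type dominates and cancellation cannot occur, together with the verified instances already in hand: the exact period $d$ of $\gamma_3$ for one-move riders in Proposition~\ref{P:2q-3 period1}, and the small-$i$ formulas underlying Theorem~\ref{T:gammapolysquare} and Proposition~\ref{p:attackdc}.
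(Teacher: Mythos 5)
The statement you are addressing is stated in the paper as Conjecture~\ref{Cj:perioddenom}: the paper offers no proof of it, only the one-way divisibility $p_i \mid D_i^q$ (via McMullen/Ehrhart theory, recorded just before the conjecture) together with the empirical observation that equality holds in all computed examples. Your proposal does not close this gap, and one of its load-bearing steps is false as stated. You claim that choosing a flat $\cU^\ast$ whose intersection with $\partial\cube$ has a coordinate with denominator exactly divisible by $\ell^a$ ``forces the contributing coefficient $\bar\gamma_{i-\codim\cU^\ast}(\cU^\ast)$ to have $\ell$-part of period equal to $\ell^a$.'' Ehrhart theory gives only the divisibility of the period of a coefficient into the relevant index; it never forces equality, and period collapse of individual Ehrhart coefficients is a genuine, well-documented phenomenon. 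So the ``base case'' of your localization argument is itself an unproved instance of the same kind of period-equals-denominator assertion, now for the polytopes $\cube\cap\tcU$ rather than the full inside-out polytope.

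Second, the step you yourself flag as the hard part --- that the $\ell^a$-periodicity of one flat's contribution survives the M\"obius- and automorphism-weighted signed sum in Equation~\eqref{E:typesum2} over all flat types sharing that modulus --- is precisely the content of the conjecture, and you do not prove it; you only name an ``intermediate target.'' The parity relations of Corollary~\ref{C:coeffcycle} constrain which periodic constituents must vanish, but they provide no lower bound on the period of a signed combination of such constituents. What you have is a reasonable research plan: the reduction $p=\lcm_i p_i$ and $D(\cube,\cA_\pP)=\lcm_i D_i^q$ is a sensible organizing device, and localizing at a prime is the natural way to attack the reverse divisibility. But both essential steps (non-collapse for a single flat, and non-cancellation across flats) remain open, so this is a strategy outline for an open problem, not a proof.
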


%%%%%%%%%%%%%%%%%%%%%%%%%%%%%%%%%%%%%%%%%%%%%%%%%%%%%%%
\sectionpage\section*{Appendix:  Dictionary of Notation} 

This dictionary refers to the initial definition of the notation in this article, where applicable.  The reader may wish to refer to the dictionary of notation from Part~I as well. 

%\newpage\noindent LEFT MARGIN ALIGNMENT \hfill RIGHT MARGIN ALIGNMENT

\noindent\begin{tabular}{ll}

	%Latin letters
\\ \hbox to 4.5cm{$a_{1i}$}	&coefficients of $A_1(n)$ (p.\ \pageref{E:A1})
\\ $a_\pP(2;n)$	&\# of attacking configurations (p.\ \pageref{d:aP})
\\ $b$		&$y$-intercept of $l^{d/c}(b)$ (p.\ \pageref{d:linesizes})
\\ $(c,d),(c_r,d_r)$	&coordinates of basic move (p.\ \pageref{d:cd2})
\\ $d/c$ \quad\dotfill		&slope of a line (p.\ \pageref{d:cd2})
\\ $(\hatc,\hatd)$	&$(\min,\max)$ of $c,d$ (p.\ \pageref{d:cdhat})
\\ $l$		&index for $\bL^{d/c}(n)$; i.e., line size (p.\ \pageref{d:n/d})
\\ $l^{d/c}(b)$	&line of slope $d/c$, $y$-intercept $b$ (p.\ \pageref{ldcb})
\\ $\lB^{d/c}(b)$ 	&$= l^{d/c}(b) \cap [n]^2$ (p.\ \pageref{ldcb})
\\ $m_r = (c_r,d_r), m=(c,d)$	&basic move (p.\ \pageref{d:cd2})
\\ $n$		&size of square board (p.\ \pageref{d:n}) 
\\ $n+1$		&dilation factor for board (p.\ \pageref{d:n+1}) 
\\ $[n]$ 		&$= \{1,\hdots,n\}$ (p.\ \pageref{d:[n]})
\\ $[n]^2$		&square board (p.\ \pageref{d:[n]2}) 
\\ $\barn$	\quad\dotfill	&$n \mod \hatd$ (p.\ \pageref{d:barn})
\\ $o_\pP(q;n)$	&\# of nonattacking labeled configurations (p.\  \pageref{d:distattacks})
\\ $p$		&period of counting quasipolynomial (p.\ \pageref{d:p})
\\ $p(\cU)$		&period of quasipolynomial $\alpha(\cU;n)$ (p.\ \pageref{d:p})
\\ $p_j(\cU)$	&period of coefficient $\bar\gamma_j(\cU)$ (p.\ \pageref{d:p})
\\ $q$ \quad\dotfill	&\# of pieces on a board (p.\ \pageref{d:P})
\\ $r$		&move index 
\\ $u_\pP(q;n)$	&\# of nonattacking unlabeled configurations (p.\ \pageref{d:indistattacks})
\\ $z=(x,y), z_i=(x_i,y_i)$	&piece position (p.\ \pageref{d:Z})
\\

\end{tabular}

\noindent\begin{tabular}{ll}

	%l.c. Greek
\\ \hbox to 4.5cm{$\alpha(\cU;n)$}	&quasipolynomial for a subspace (p.\ \pageref{d:acU})
\\ $\alpha_i(\cU;n)$	&constituent of $\alpha(\cU;n)$ (p.\ \pageref{d:acU})
\\ $\alpha^{d/c}(n)$	&\# of 2-piece collinear attacks (p.\ \pageref{d:adc})
\\ $\beta^{d/c}(n)$	&\# of 3-piece collinear attacks (p.\ \pageref{d:bdc})
\\ $\gamma_i$ \quad\dotfill	&coefficient in unlabeled counting function $u_\pP$ (p.\ \pageref{d:gamma})
\\ $\bar\gamma_j(\cU)$	&coefficient in $\alpha(\cU;n)$ (p.\ \pageref{E:subcoeffs})
\\ $\delta_{ij}$	&Kronecker delta (p.\ \pageref{d:KD})
\\ $\delta$ 		&$= \lfloor n/d\rfloor$ (p.\ \pageref{d:n/d})
\\ $\bar\zeta_i$	&coefficient in periodic part (p.\ \pageref{d:barzeta})
\\ $\bar\theta_{i,\kappa}$ \quad\dotfill &coefficient of $(q)_\kappa$ in $q!\gamma_i$ (p.\ \pageref{E:gammasumsquare})
\\ $\kappa$	&\# of pieces involved in a subspace (p.\ \pageref{d:cU})
\\ $\mu$		&M\"obius function of $\cL(\cA)$ (p.\ \pageref{d:mu})
\\ $\nu$		&codimension of a subspace (p.\ \pageref{d:cU})
\\ $\pi_i$		&$i$-index of $\cP$ (p.\ \pageref{d:pi})
\\

\end{tabular}

\newpage
\noindent\begin{tabular}{ll}

	%Uppercase Roman
\\ \hbox to 4.5cm{$A_1(n)$}	&\# of attacking ordered pairs (p.\ \pageref{E:A1})
\\ $D$		&denominator of polytope or inside-out polytope (p.\ \pageref{d:gamma})
\\ $E_{\cP}$	&Ehrhart quasipolynomial (p.\ \pageref{d:Ehr})
\\ $E_{\cP}^\circ$ \quad\dotfill	&open Ehrhart quasipolynomial (p.\ \pageref{d:E})
\\ $E_{\cP,\cA}^\circ$	&inside-out open Ehrhart quasipolynomial (p.\ \pageref{d:Eiop})
\\ $H$		&size of automorphism group (p.\ \pageref{d:H})
\\ $I$, $J$, $I_i$, $J_j$	&subsets of $Z$ (p.\ \pageref{d:IJ})
\\ $N$		&auxiliary variable (p.\ \pageref{d:Naux})
\\ $Z$ \quad\dotfill	&lower left border of $[n]^2$ (p.\ \pageref{d:Z})
\\
	%Other
\\ $\bL^{d/c}(n)$	&multiset of line sizes (p.\ \pageref{d:linesizes})
\\ $\M$	 	&set of basic moves (p.\ \pageref{d:[n]2})
\\
	%Geometry
\\ $\cA_{\pP}$	&move arrangement of piece $\pP$ (p.\ \pageref{d:cA})
\\ $\cB, \cBo$	&closed, open board polygon (p.\ \pageref{d:P})
\\ $\cH_{ij}^{d/c}$ 	&hyperplane of move $(c,d)$ (p.\ \pageref{d:cH})
\\ $\cL$		&intersection lattice  (p.\ \pageref{d:L})
\\ $\cP$, $\cP^\circ$ \quad\dotfill	&polytope, open polytope (p.\ \pageref{d:P})
\\ $(\cP,\cA_\pP)$	&inside-out polytope (p.\ \pageref{d:iop})
\\ $\cU$		&subspace in intersection lattice (p.\ \pageref{d:cU})
\\ $[\cU]$		&subspace type (p.\ \pageref{d:type})
\\ $\tcU$		&essential part of $\cU$ (p.\ \pageref{d:cU})
\\ $\cW_{ij\ldots}^{\,d/c}$ \quad\dotfill	&subspace of slope relation (p.\ \pageref{d:Wdc})
\\ $\cW_{ij\ldots}^{\,=}$	&subspace of equal position (p.\ \pageref{d:W=})
\\
	%Standard sets
\\ $\bbR$		&real numbers
\\ $\bbZ$		&integers
\\
	%Pieces
\\ $\pP$		&piece (p.\ \pageref{d:P})
\\ $\|\pP\|$		&move range of piece (p.\ \pageref{d:range})
\\
	%Capital Greek
\\ $\Lambda$	&lcm of move extents (p.\ \pageref{d:Lambda})
\\ $\Pi_q$		&lattice of partitions of $[q]$ (p.\ \pageref{d:Piq})
\\
	%Abbreviations
\\ $\Aut(\cU)$	&subspace automorphism group (p.\ \pageref{d:Aut})
\\ $\codim(\cU)$	&subspace codimension 
\\ $\dim(\cU)$	&subspace dimension
\\

\end{tabular}

%%%%%%%%%%%%%%%%%%%%%%%%%%%%%%%%%%%%%%%%%%%%%%%%%%%%%%%
\sectionpage
\newcommand\otopu{$\overset{\circ}{\textrm u}$}

\end{document}